\newcommand{\CM}{Cohen-Macaulay}
\newcommand{\fg}{finitely generated \ }
\newcommand{\eF}{\EuScript{F}}
\newcommand{\ff}{\text{if and only if}}
\newcommand{\wrt}{with respect to}
\newcommand{\f}{\hat{f} }
\newcommand{\n}{\mathfrak{n} }
\newcommand{\m}{\mathfrak{m} }
\newcommand{\tf}{\mathfrak{t} }
\newcommand{\M}{\mathfrak{M} }
\newcommand{\N}{\mathfrak{N} }
\newcommand{\q}{\mathfrak{q} }
\newcommand{\A}{\mathfrak{a} }
\newcommand{\B}{\mathfrak{b} }
\newcommand{\C}{\mathfrak{c} }
\newcommand{\R}{\mathcal{R} }
\newcommand{\Pc}{\mathcal{P} }
\newcommand{\Z}{\mathbb{Z} }
\newcommand{\nZ}{n \in \mathbb{Z} }
\newcommand{\spr}{\mathfrak{s} }
\newcommand{\GA}{G_{\mathfrak{a}}(A) }
\newcommand{\GB}{G_{\mathfrak{b}}(B) }
\newcommand{\GT}{G_{\mathfrak{t}}(T) }
\newcommand{\ra}{\EuScript{R}_{\mathfrak{a}}(A) }
\newcommand{\GF}{G_{\EuScript{F}}(A) }
\newcommand{\tF}{\EuScript{S}_{\EuScript{F}}(A)}
\newcommand{\ral}{\EuScript{R}_{\mathfrak{a}^l}(A) }
\newcommand{\ta}{\EuScript{S}_{\mathfrak{a}}(A) }
\newcommand{\tM}{\EuScript{S}_{\mathfrak{a}}(M) }
\newcommand{\eR}{\EuScript{R}}
\newcommand{\xb}{\mathbf{x}}
\newcommand{\ub}{\mathbf{u}}
\newcommand{\eG}{\EuScript{G}}
\newcommand{\eH}{\EuScript{H}}
\newcommand{\Sc}{\mathcal{S} }
\newcommand{\rt}{\rightarrow}
\newcommand{\xar}{\longrightarrow}
\newcommand{\ov}{\overline}
\newcommand{\sub}{\subseteq}
\newcommand{\bX}{\mathbf{X} }
\newcommand{\bY}{\mathbf{Y} }
\newcommand{\Bcal}{\mathcal{B} }
\newcommand{\dd}{*}
\newcommand{\wt}{\widetilde }
\newcommand{\grade}{\operatorname{grade}}
\newcommand{\depth}{\operatorname{depth}}
\newcommand{\ann}{\operatorname{ann}}
\newcommand{\red}{\operatorname{red}}
\newcommand{\htt}{\operatorname{ht}}
\newcommand{\Hs}{\operatorname{\ ^* Hom}}
\newcommand{\Es}{\operatorname{\ ^* Ext}}
\newcommand{\Hom}{\operatorname{Hom}}
\newcommand{\Ext}{\operatorname{Ext}}
\theoremstyle{plain}
\newtheorem{thm}{Theorem}
\newtheorem{theorem}{Theorem}[section]
\newtheorem{corollary}[theorem]{Corollary}
\newtheorem{lemma}[theorem]{Lemma}
\newtheorem{proposition}[theorem]{Proposition}
\theoremstyle{definition}
\newtheorem{definition}[theorem]{Definition}
\newtheorem{defn}[thm]{Definition}
\newtheorem{remark}[theorem]{Remark}
\newtheorem{remarkC}[thm]{Remark}
\newtheorem{example}[theorem]{Example}
\theoremstyle{remark}
\newtheorem{observation}[theorem]{Observation}
\numberwithin{equation}{theorem}
\begin{document}

\title{Itoh's conjecture for normal ideals}
\author{Tony~J.~Puthenpurakal}
\date{\today}
\address{Department of Mathematics, Indian Institute of Technology, Bombay, Powai, Mumbai 400 076, India}

\email{tputhen@math.iitb.ac.in}
\subjclass{Primary  13A30,  13D45 ; Secondary 13H10, 13H15}
\keywords{multiplicity,  reduction, Hilbert polynomial, associated graded rings}
  \begin{abstract}
Let $(A,\m)$ be an analytically unramified  \CM \ local ring and let $\A$ be an $\m$-primary ideal in $A$. If $I$ is an ideal in $A$ then let $I^*$ be the integral closure of $I$ in $A$. Let $\GA^*  = \bigoplus_{n\geq 0 }(\A^n)^*/(\A^{n+1})^*$ be the associated graded ring of the integral closure filtration of $\A$. Itoh conjectured  in 1992 that if $e_3^{\A^*}(A) = 0$ and $A$ is Gorenstein then $\GA^*$ is \CM. In this paper we prove an important case of Itoh's conjecture: we show that if $A$ is \CM \  and if  $\A$ is normal (i.e., $\A^n$ is integrally closed for all $n \geq 1$)
with  $e_3^\A(A) = 0$ then $G_\A(A)$ is \CM.
  \end{abstract}
 \maketitle

\section{Introduction}
\begin{remark}
This paper consists of part of the author's paper \cite{PuCI}. This was done due to advice of  some of my colleagues. The other parts of
  \cite{PuCI} will be published later in a separate paper.
\end{remark}
Let $(A,\m)$ be a Noetherian   local ring of dimension $d$ and let $\A$ be  an $\m$-primary ideal in $A$. We consider multiplicative $\A$-stable filtration's i.e.,
$\eF = \{ \A_n \}_{n\geq 0}$ such that
\begin{enumerate}[\rm (i)]
  \item $\A_n$ are ideals in $A$ for all $n \geq 0$ with  $\A_0 = A$.
  \item $\A \subseteq \A_1$ and $\A_1 \neq A$.
  \item $\A_n \A_m \subseteq \A_{n + m}$ for all $n, m \geq 0$.
  \item $\A \A_n = \A_{n + 1}$ for all $n \gg 0$.
\end{enumerate}
Let $\ta = \bigoplus_{n\geq 0} \A^n$ be the Rees ring of $A$ \wrt \ $\A$. Let $\tF =  \bigoplus_{n\geq 0} \A_n$ be the Rees ring of $A$ \wrt \ $\eF$. We have an inclusion of rings $\ta \hookrightarrow \tF$ and $\tF$ is a finite $\ta$-module. We  give some examples of multiplicative $\ta$-stable filtration's:

$(0)$: Of course the first example of $\ta$-stable filtration is the $\A$-adic filtration $\{ \A^n \}_{n \geq 0}$.

$(1)$ Assume $\depth A > 0$. Consider
$$\widetilde{\A^n} = \bigcup_{r \geq 1} (\A^{n+r} \colon \A^r).$$
Then $\{ \widetilde{\A^n} \}_{n \geq 0}$ is the well-known Ratliff-Rush filtration of $A$ \wrt \ $\A$. It is well-known that $\widetilde{\A^n} = \A^n$ for all $n\gg 0$. Furthermore it is easily verified that $\{ \widetilde{\A^n} \}_{n \geq 0}$ is an $\A$-stable filtration, see \cite{RR}.

$(2)$ Assume $A$ is analytically unramified, i.e., the completion of $A$ is reduced. In this case there are two $\A$-stable filtration which is of interest:
\begin{enumerate}[\rm (i)]
\item
If $I$ is an ideal in $A$ then by $I^*$ we denote the integral closure of $I$ in $A$.
The integral closure filtration of  $\A$ is $\{ (\A^n)^* \}_{n \geq 0}$.  As $A$ is analytically unramified the integral closure filtration of $\A$ is $\A$-stable,  see \cite{Rees} (also see \cite[9.1.2]{Hun-Sw}).  The integral closure filtration of $\m$-primary ideals  has inspired plenty of research, see for instance \cite[Appendix 5]{ZS}, \cite{Rees-2}, \cite{Hun}, \cite{It}, \cite{ItN}, \cite{Masuti-1}, \cite{Masuti-2}
\cite{G}, \cite{GHM}, \cite{CPR} and \cite{Ver}.
\item
If $A$ contains a field of characteristic $p >0$ then the tight closure filtration of $\A$ has been  of some  interest recently,  see \cite{GVM}.
\end{enumerate}

Let $\GA = \bigoplus_{n \geq 0} \A^n/\A^{n+1}$ be the associated graded ring of $A$ \wrt \ $\A$. Let $\eF = \{ \A_n \}_{n \geq 0}$ be an $\A$-stable filtration. Consider
$\GF = \bigoplus_{n \geq 0} \A_n/\A_{n+1}$ be the associated graded ring of $A$ \wrt \ $\eF$. Let $\ell(M)$ denote the length of an $A$-module $M$.  Clearly $\GF $ is a finite $\GA$-module. It follows the function
$H_{\eF}(n) = \ell(A/\A_{n +1})$  is of polynomial type, i.e., there exists a polynomial $P_\eF(X) \in \mathbb{Q}[X]$ of degree $d$ such that $P_{\eF}(n) = H_{\eF}(n)$ for all $n \gg 0$. We write
$$P_\eF(X) = e_0^\eF(A)\binom{X+d}{d} -  e_1^\eF(A)\binom{X+d -1}{d -1} + \cdots + (-1)^d e_d^\eF(A). $$
If $\eF$ is the $\A$-adic filtration then we set $e_i^\A(-)= e^\eF_i(-)$. Also if $\eF$ is the $\A$-integral closure filtration then we set ${e_i^\A}^*(-)= e^\eF_i(-)$.
Clearly $e_0^\eF(A) = e_0^\A(A) > 0$, for instance see \cite[11.4]{AM}.

\emph{For the rest of the paper we will assume that $A$ is a \CM \ local ring with residue field $k$.}\\
Northcott showed that $e_1^\A(A) \geq 0$. Furthermore if $k$ is infinite then $e_1^\A(A) = 0$ if and only if $\A$ is a parameter ideal, see \cite{North}.
It is well-known  that for any $\A$-stable  multiplicative filtration we have $e_1^\eF(A) \geq 0$.
Narita showed that $e_2^\A(A) \geq 0$.  Furthermore if $k$ is infinite and $d = 2$ then $e_2^\A(A) = 0$ if and only if reduction number of $\A^n$ is one for all $n \gg 0$, see \cite{Nar}. Narita also gave an example of a ring with $e_3^\A(A) < 0$.
We now consider the integral closure filtration of $\A$. In this case ${e_2^\A}^*(A) \geq 0$. Itoh proved that ${e_3^\A}^*(A) \geq 0$, see \cite[Theorem 3]{ItN}. He conjectured that if $A$ is Gorenstein and
${e_3^\A}^*(A) = 0$ then $\GA^* = \bigoplus_{n \geq 0} (\A^n)^*/(\A^{n+1})^*$ is \CM. He proved the result when
${\A}^* = \m$, see \cite[Theorem 3]{ItN} (also see \cite[3.3]{CPR}). See \cite{KM} for a geometric view-point of Itoh's conjecture.

Recall an ideal $I$ is normal if $I^n$ is integrally closed for all $n \geq 1$. In this paper we solve Itoh's conjecture for normal $\m$-primary ideals. Note if $A$ contains an
$\m$-primary normal ideal then it is automatically analytically unramified, (cf., \cite[p.\ 153]{HM} and \cite[9.1.1]{Hun-Sw}). We prove:
\begin{theorem}\label{main-new}
Let $(A,\m)$ be a \CM \ local ring and let $\A$ be an $\m$-primary normal ideal with $e_3^\A(A) = 0$. Then $\GA$ is \CM.
\end{theorem}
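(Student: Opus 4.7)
The plan is to proceed by induction on $d := \dim A$, with the critical case being $d = 3$. First I would pass to an extension with infinite residue field via the standard flat base change $A \to A[X]_{\m A[X]}$, which preserves normality of $\A$, all Hilbert coefficients, and the depth of $G_\A(A)$. The key structural input from normality is that every power $\A^n$ is integrally closed, hence Ratliff-Rush closed, and so $\depth G_\A(A) \geq 1$ whenever $d \geq 1$. Consequently, for a sufficiently general superficial element $x \in \A$, the initial form $x^* \in G_\A(A)_1$ is a non-zerodivisor, and the Sally descent isomorphism $G_\A(A)/(x^*) \cong G_{\overline{\A}}(\overline{A})$ holds with $\overline{A} := A/(x)$, $\overline{\A} := \A\overline{A}$, together with $e_i^{\overline{\A}}(\overline{A}) = e_i^\A(A)$ for $0 \leq i \leq d-1$.

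For $d \geq 4$ I would iterate this descent. The wrinkle is that $\overline{\A}$ need not itself be normal; however, I would verify that the weaker property $\widetilde{\overline{\A}^n} = \overline{\A}^n$ for all $n$ is inherited, which suffices to keep the descent running. After $d - 3$ such reductions one arrives at a $3$-dimensional Cohen-Macaulay local ring $(B, \n)$ with an $\n$-primary ideal $\B$ satisfying $e_3^\B(B) = 0$ and every power of $\B$ Ratliff-Rush closed, and it suffices to prove that $G_\B(B)$ is Cohen-Macaulay in this setting.

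For the critical dimension-$3$ case, I would fix a minimal reduction $J = (y_1, y_2, y_3)$ of $\B$ with each $y_i$ superficial, and aim for the Valabrega-Valla intersection $J \cap \B^{n+1} = J \B^n$ for all $n \geq 0$, which is equivalent to $y_1^*, y_2^*, y_3^*$ forming a regular sequence in $G_\B(B)$. The driver would be an Itoh-style sharp expansion of $e_3^\B(B)$ as a non-negative $\mathbb{Z}$-linear combination of length terms whose individual vanishing encodes the intersection formula at each $n$; the normality of every power of $\B$ is precisely what forces each coefficient to be strictly positive. Given $e_3^\B(B) = 0$, each term would vanish, Cohen-Macaulayness of $G_\B(B)$ would follow, and lifting back through the superficial chain would give the result in arbitrary dimension.

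The main obstacle will be establishing the sharp dimension-$3$ expansion with strictly positive coefficients, refining Itoh's non-negativity estimate ${e_3^\A}^*(A) \geq 0$; the key is to extract from normality of every power of $\B$ enough control on $\B^{n+1} \cap J$ simultaneously for all $n$. A secondary technicality is propagating the Ratliff-Rush property through the quotients by superficial elements when $d \geq 4$; this should follow from the normality of each $\A^n$ but requires care in choosing the superficial element so that the intersections $\A^{n+1} \cap (x)$ behave well.
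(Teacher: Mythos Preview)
Your proposal has two genuine gaps, one in the reduction step and one in the core dimension-$3$ argument.

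\textbf{The reduction from $d\geq 4$ to $d=3$ does not go through.} You claim that the property ``$\widetilde{\overline{\A}^n}=\overline{\A}^n$ for all $n$'' is inherited by $\overline{\A}=\A/(x)$. This is equivalent to $\depth G_{\overline{\A}}(\overline{A})\geq 1$, and that is \emph{not} a consequence of $\depth G_\A(A)\geq 1$ together with $x^*$ being $G_\A(A)$-regular; one needs $\depth G_\A(A)\geq 2$ for Sally's machine to propagate positive depth downward, and that is exactly what you are trying to prove. Moreover, later in the dimension-$3$ paragraph you invoke ``normality of every power of $\B$'', but by your own reduction you no longer have normality, only the (unproven) Ratliff--Rush property. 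The paper avoids this trap entirely: rather than descending to dimension $3$, it first proves the $d=3$ case and then \emph{ascends} to higher $d$ via the local cohomology modules $H^i(L^\A(A))$; the only descent used is by a single superficial element after a general extension, and there one uses Ciuperc\u{a}'s theorem that \emph{asymptotic} normality (not the Ratliff--Rush property) survives this passage.

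\textbf{The dimension-$3$ step is not a proof sketch but a restatement of the problem.} You propose an ``Itoh-style sharp expansion of $e_3^\B(B)$ as a non-negative $\mathbb{Z}$-linear combination of length terms whose individual vanishing encodes the intersection formula''. No such expansion is known; Itoh's proof of $e_3\geq 0$ does not produce one, and indeed Itoh himself could only settle the equality case under the extra hypothesis $\A^*=\m$. The Huckaba-type formula $e_3=\sum_{i\geq 2}\binom{i}{2}\sigma_i$ with $\sigma_i=\ell(\A^{i+1}/\C\A^i)$ would do what you want, but it requires $\depth G_\A(A)\geq 2$ as input, which is again circular. The paper's actual argument in dimension $3$ is of a completely different nature: it builds a complete-intersection approximation $[T,\tf,\q,\psi]$ of $[A,\m,\A]$ with controlled reduction number, passes to the dual filtration on the canonical module $\omega_A=\Hom_T(A,T)$, and uses Matlis duality together with a Veronese argument to force $H^2(G_\A(A))=0$. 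The vanishing $H^1(L^\A(A))=H^2(L^\A(A))=0$ then follows, and this is what drives the whole theorem. None of this machinery appears in your outline, and there is no indication that the elementary route you suggest can be made to work.
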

\emph{Note we are not assuming $A$ is Gorenstein in Theorem \ref{main-new}.}

The proof of Theorem \ref{main-new} requires three techniques, two of which is introduced in this paper  and one which was introduced in an earlier paper of the author. We describe the three techniques.

\emph{First Technique:}\\
Let $\mathcal{P}$ be a property of  Noetherian rings. For instance $\Pc =$ regular, complete intersection (CI), Gorenstein, \CM \ etc.
\begin{defn}
Let $(A,\m)$ be a Noetherian local ring and let $\A$ be a proper ideal
in $A$. We say $A$ admits a \emph{$\Pc$-approximation} \wrt \ $\A$ if there exists
a  local ring $(B,\n)$, an ideal $\B$ of $B$  and $\psi \colon B \rt A$,  a local ring homomorphism
with $\psi(\B)A =  \A$ such that the following \emph{three}
properties hold:
 \begin{enumerate}
\item
$A$ is a finitely generated $B$ module ( via $\psi$).
\item
$\dim A = \dim B $.
\item
$B$ and $\GB$  have property $\Pc$.
\end{enumerate}
 If the above conditions hold we say $[B,\n,\B,\psi]$ is a \emph{$\Pc$-approximation}
of $[A,\m,\A]$.
\end{defn}

\begin{remarkC}

\noindent $\bullet$  Regular approximations \textit{seems} to be  rare while Cohen-Macaulay approximations \emph{don't seem} to  have many applications.

  \noindent $\bullet$   For applications we will often insist that $\psi$, $\B$ satisfy some additional properties.
\end{remarkC}

When $A$ is complete we prove the following general result(see \ref{equiTT}).  Let $\spr(\A)$ denote the analytic spread of $\A$, see \ref{equimultiple-defn}.

\begin{thm}\label{introGAPP}
Let $(A,\m)$ be  a complete  Noetherian local ring and let $\A$ be a proper ideal
 in $A$ such that $ \spr(\A) + \dim A/\A = \dim A$. Then
$A$  admits a  CI-approximation.
\wrt \ $\A$.
\end{thm}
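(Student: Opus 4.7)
The plan is to construct $B$ explicitly as a quotient of a formal power series ring over a regular local subring of $A$ given by Cohen's structure theorem, with defining relations coming from monic integral equations for the generators of $\A$ over a minimal reduction.

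After reducing to the case of an infinite residue field by a standard faithfully flat extension, the equimultiplicity hypothesis $\spr(\A) + \dim A/\A = \dim A$ produces a minimal reduction $J = (x_1, \ldots, x_s)$ of $\A$ with $s = \spr(\A)$, which extends to a full system of parameters $x_1, \ldots, x_d$ of $A$. Since $A$ is complete, Cohen's structure theorem realizes $A$ as a finite module over the complete regular local ring $B_0 = k[[X_1, \ldots, X_d]]$ via $X_i \mapsto x_i$ (with the analogous presentation in mixed characteristic). Fix a minimal generating set $y_1 = x_1, \ldots, y_s = x_s, y_{s+1}, \ldots, y_\mu$ of $\A$ and elements $w_1, \ldots, w_N \in \m$ generating $A$ as a $B_0$-algebra. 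Each $w_l$ satisfies a monic integral equation $h_l(W_l) = W_l^{m_l} + c_{l,1} W_l^{m_l-1} + \cdots + c_{l,m_l}$ over $B_0$, and each $y_i$ with $i > s$, being integral over $J$, satisfies $y_i^{n_i} + \sum_{j=1}^{n_i} a_{i,j} y_i^{n_i-j} = 0$ with $a_{i,j} \in J^j = (X_1, \ldots, X_s)^j A$. Writing $a_{i,j} = b_{i,j,0} + \sum_l b_{i,j,l} w_l$ with each $b_{i,j,l} \in (X_1, \ldots, X_s)^j B_0$, we obtain polynomials
\[
f_i(T_i, W) \ = \ T_i^{n_i} + \sum_{j=1}^{n_i} \Big( b_{i,j,0} + \sum_l b_{i,j,l} W_l \Big) T_i^{n_i-j}
\]
satisfying $f_i(y_i, w) = 0$ in $A$.

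Now define $B := B_0[[T_{s+1}, \ldots, T_\mu, W_1, \ldots, W_N]] / (f_{s+1}, \ldots, f_\mu, h_1, \ldots, h_N)$, the local homomorphism $\psi \colon B \to A$ by $X_i \mapsto x_i$, $T_i \mapsto y_i$, $W_l \mapsto w_l$, and the ideal $\B := (X_1, \ldots, X_s, T_{s+1}, \ldots, T_\mu) B$. Since the $f_i$ and $h_l$ are monic in the distinct variables $T_i$ and $W_l$, they form a regular sequence in the regular ring $B_0[[T, W]]$, making $B$ a complete intersection of dimension $d$; the map $\psi$ is surjective, so $A$ is a finite (in fact cyclic) $B$-module; and $\psi(\B) A = (x_1, \ldots, x_s, y_{s+1}, \ldots, y_\mu) A = \A$. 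This handles three of the four conditions, and the remaining task is to show that $\GB$ is a complete intersection.

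To this end, equip $P := B_0[[T, W]]$ with the $\B$-adic filtration lifted to $P$: assign weight $1$ to $X_1, \ldots, X_s$ and $T_{s+1}, \ldots, T_\mu$ and weight $0$ to the remaining variables. The associated graded ring
\[
\mathrm{gr}(P) \ = \ k[[X_{s+1}, \ldots, X_d, W_1, \ldots, W_N]] [X_1^*, \ldots, X_s^*, T_{s+1}^*, \ldots, T_\mu^*]
\]
is a polynomial ring over a complete regular local ring, hence itself regular. Crucially, because $b_{i,j,l} \in (X_1, \ldots, X_s)^j$, the initial form $f_i^*$ is a homogeneous polynomial of degree $n_i$ monic in $T_i^*$, while $h_l^*$ is the reduction of $h_l$ modulo $(X_1, \ldots, X_s)$, monic of degree $m_l$ in $W_l$ and sitting in degree $0$. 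Quotienting $\mathrm{gr}(P)$ by the $h_l^*$ first yields a polynomial ring over a complete intersection local ring, in which the monic $f_i^*$ are manifestly a regular sequence; hence $(f^*, h^*)$ is a regular sequence in $\mathrm{gr}(P)$. By the standard criterion that a regular sequence of initial forms implies the generators form a standard basis for their ideal, we conclude $\GB \cong \mathrm{gr}(P)/(f^*, h^*)$, which is a quotient of a regular ring by a regular sequence and so is a complete intersection. The main obstacle is precisely this standard-basis step; its success hinges on the degree control $b_{i,j,l} \in (X_1, \ldots, X_s)^j$, which is afforded by the integrality of the $y_i$ over the reduction $J$.
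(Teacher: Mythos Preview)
Your construction is genuinely different from the paper's. The paper maps a power series ring $S=k[[Y_1,\ldots,Y_s,X_1,\ldots,X_n]]$ (where the $Y_i$ lift a system of parameters of $A/\A$ and the $X_j$ map to generators of $\A$) onto $A$ as a finite module, then finds a $G_{(\bX)}(S)$-regular sequence $\xi_1^*,\ldots,\xi_c^*$ inside $\ann G_\A(A)$ via Theorem~\ref{hann}, and lifts it to $u_1,\ldots,u_c\in\ann_S A$ with regular initial forms using the determinant-trick Lemma~\ref{annihilator}; then $B=S/(\ub)$. Your route instead builds $B$ explicitly from integral equations of the non-reduction generators $y_i$ over a minimal reduction $J$, verifying the complete intersection property of $\GB$ by checking that the initial forms are monic in separate variables. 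When it works your argument is pleasantly concrete and avoids the somewhat delicate Theorem~\ref{hann}; the paper's argument, by contrast, never needs a minimal reduction and so goes through uniformly.

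That difference is exactly where your proof has a gap. Your first move, ``reducing to the case of an infinite residue field by a standard faithfully flat extension,'' does not do what you need: the definition of CI-approximation asks for a local map $B\to A$ making $A$ a finite $B$-module, and there is no evident way to descend a CI-approximation of $\widehat{A'}$ (with $A'=A[X]_{\m A[X]}$) to one of $A$. Without an infinite residue field you cannot guarantee a reduction of $\A$ generated by $\spr(\A)$ elements, so the whole construction stalls. The paper sidesteps this entirely because Theorem~\ref{hann} only needs a homogeneous system of parameters for $G_\A(A)$, which the hypothesis $\spr(\A)+\dim A/\A=\dim A$ supplies directly without any residue-field assumption. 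Two smaller points: your expression $a_{i,j}=b_{i,j,0}+\sum_l b_{i,j,l}w_l$ requires the $w_l$ to generate $A$ as a $B_0$-\emph{module}, not merely as an algebra; and the phrase ``analogous presentation in mixed characteristic'' hides a real issue, since the Cohen structure map sends the uniformizer of the coefficient DVR to an element that need not lie in $\A$ (compare the extra element $\Delta$ the paper has to kill in Case~2 of Theorem~\ref{equiTT}).
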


Notice the hypothesis of Theorem \ref{introGAPP} is satisfied  if $\A$ is  $\m$-primary.
Another significant case when Theorem \ref{introGAPP} holds is when $A$ is equidimensional and $\A$ is equimultiple; see \cite[2.6]{HUO}.  For definition of an equimultiple ideal see \ref{equimultiple-defn}. For an indication why this technique is useful, see Remark \ref{pulp}.

\emph{Second Technique:}\\
Before we state our second step we need some preliminaries.
Let $M$ be a finitely generated $A$-module.
Recall an $\A$-\textit{filtration} $\eG = {\{M_n\}}_{n \in \Z}$ on $M$ is a
collection of submodules of $M$ with the properties
\begin{enumerate}
\item
$M_n \supset M_{n+1}$ for all $n \in \Z$.
\item
$M_n = M$ for all $n \ll 0$.
\item
$\A M_n \subseteq M_{n+1}$ for all $n \in \Z$.
\end{enumerate}
If
$\A M_n = M_{n+1}$ for all $n \gg 0$ then we say $\eG$ is $\A$-\emph{stable}.

Let $\ra = \bigoplus_{n\in \Z} \A^n$ be the extended Rees algebra of $A$ \wrt \ $A$.
If $\eG = \{M_n\}_{n \in \Z}$ is an $\A$-stable filtration on $M$ then  set
$\eR(\eG,M) = \bigoplus_{n \in \Z}M_nt^n$ the \emph{extended Rees-module} of $M$
\emph{\wrt }\ $\eG$.
 Notice that $\eR(\eG,M)$ is a finitely generated graded $\ra$-module. Let $G(\eG, M) = \bigoplus_{n\in \Z}M_n/M_{n+1}$ be the associated graded module of $M$ \wrt \ $\eG$. Note
 $G(\eG,M)$ is a finitely generated graded $G_\A(A)$-module.

Set $M^* = \Hom_A(M, A)$. Set $M^* = \Hom_A(M, A)$. For $n \in \Z$ set
$$  M_n^* = \{ f \in M^* \mid    f(M) \subseteq \A^n \} \cong \Hom_A(M, \A^n).      $$
It is well-known that $M_{n+1}^* = \A M_n^*$ for all $n \gg 0$.
Note $M_n^* = M^*$ for $n \leq 0$. Set $\eF = \{M_n^*\}_{n \in \Z}$. We call this filtration \emph{ the dual filtration } of $M^*$ \wrt \  $\A$. This filtration is
classical cf.  \cite[p.\ 12]{Ser}.  However it has not been used before in the study of blow-up algebra's (modules) of \CM \ rings (modules).
 \emph{We construct  an explicit isomorphism (see \ref{main}):}
$$\Psi_M \colon \eR(\eF,M^{*}) \quad \xar \Hs_\R(\eR(\A, M),\ra ). $$
It is an important observation that dual filtration's behaves well  \wrt \ the Veronese functor, see \ref{dual-Ver-prop}.
\begin{remark}\label{pulp}
Let $(A,\m)$ be a complete \CM \ local ring of dimension $d$ and let $\A$ be an $\m$-primary ideal. Let $[B,\n,\B,\psi]$ be a \emph{CI-approximation}
of $[A,\m,\A]$. Then note $\eR(\B, A) = \eR(\A, A)$.  Note $\omega_A = \Hom_B(A, B)$ is a canonical module of $A$. The dual filtration $\eF$ on $\omega_A$ is isomorphic
to $\Hs_\R(\ra,\R_\B(B))$. As $\R_\B(B)$ is Gorenstein the dual of later module up-to shift is the top local cohomology of $\ra$ \wrt \ to its $*$-maximal ideal.
To prove our result we have to work on the top local cohomology of $\ra$ whose dual up-to shift is an $\A$-stable filtration on $\omega_A$ which is \CM \ and so it is tractable to techniques already developed by several authors.
\end{remark}

\emph{Third Step:}\\
In \cite{Pu5} we introduced $L^\A(M) = \bigoplus_{n \geq 0}M/\A^{n+1}M$ where $M$ is a \CM \ $A$-module. Note $\ta$ is a subring of $A[t]$ which has proved useful to tackle many problems.
Let  $\tM = \bigoplus_{n \geq 0} \A^nM$ be the Rees module of $M$ \wrt \ $\A$. Note $\tM $ is a finitely generated $\ta$-module.
We have an exact sequence of $\ta$-modules
\[
0 \rt \tM \rt M[t]    \rt  L^\A(M)[-1]   \rt 0. \tag{$\dagger$}
\]
This yields a $\ta$-module structure on $L^\A(M)(-1)$ and so on $L^\A(M)$.  Notice $L^\A(M)$ is not finitely generated as a $\ta$-module.
Let $\M = (\m, \ta_+)$ be the $*$-maximal ideal of $\ta$. Assume $r = \dim M \geq 1$. In \cite{Pu5} we proved that for $i = 0, \cdots, r-1$ the local cohomology modules $H^i_\M(L^\A(M))$ satisfy
some nice properties, see section \ref{Lprop} for a summary of these properties. Note that using $(\dagger)$ we get an inclusion of graded $\ta$-modules
$$ 0 \rt H^{r-1}_\M(L^I(M))[-1] \rt H^r_\M (\tM)$$
Note it is possible that $H^r_\M (\tM)_n $ can have infinite length for some $n$. However $H^{r-1}_\M(L^I(M))_n $ has finite length for all $n \in \Z$.

An ideal $I$ is said to be asymptotically normal if $I^n$ is integrally closed for all $n \gg 0$. Itoh's conjecture for normal ideals easily follows from the following result:
\begin{lemma}\label{crucial-itoh-2-introduction}
Let $(A,\m)$ be a \CM \ local ring of dimension $d \geq 3$ and let $\A$ be an asymptotically  normal $\m$-primary ideal with $e_3^\A(A) = 0$.  Then the local cohomology modules $H^i(L^\A(A))$ vanish for $1 \leq i \leq d -1$.
\end{lemma}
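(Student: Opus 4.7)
The strategy is to translate the desired cohomological vanishing for $L^\A(A)$ into a statement about the top local cohomology $H^d_\M(\ta)$, then to exploit both the \emph{CI-approximation} and the \emph{dual filtration} introduced in the two preceding techniques to dualize the problem into an assertion about an $\A$-stable filtration on the canonical module $\omega_A$. Asymptotic normality, together with Itoh's non-negativity theorem, then forces the desired collapse once $e_3^\A(A) = 0$.

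After passing to the completion (which preserves all the hypotheses), I would invoke Theorem~\ref{introGAPP} for the $\m$-primary ideal $\A$ to obtain a CI-approximation $[B,\n,\B,\psi]$ of $[A,\m,\A]$. Since $A$ is finite over $B$ with $\psi(\B)A=\A$, we have $\eR_\B(A) = \ra$ and $\eR_\B(B)$ is Gorenstein. Setting $\omega_A = \Hom_B(A,B)$ and letting $\eF$ denote the dual filtration of $\omega_A$ \wrt\ $\B$, the isomorphism $\Psi_{A}$ of~\ref{main}, combined with graded local duality over the Gorenstein ring $\eR_\B(B)$, identifies a suitable shift of the Matlis dual of $H^d_\M(\ta)$ with the extended Rees module $\eR(\eF,\omega_A)$. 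Thus each finite-length graded piece of $H^d_\M(\ta)$ is Matlis-dual to the corresponding graded piece of a concrete filtration on the \CM\ module $\omega_A$.

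The exact sequence $(\dagger)$ and its long exact sequence in local cohomology relate each $H^i_\M(L^\A(A))$ for $1 \leq i \leq d-1$ to $H^i_\M(\ta)$ and $H^{i+1}_\M(\ta)$ and, as recorded in the excerpt, yield the inclusion $H^{d-1}_\M(L^\A(A))[-1] \hookrightarrow H^d_\M(\ta)$. By the properties of $L^\A(A)$ summarized in Section~\ref{Lprop}, every graded component of $H^i_\M(L^\A(A))$ has finite length. Asymptotic normality of $\A$ ensures $\A^n = (\A^n)^*$ for all $n \gg 0$, so $e_i^\A(A) = e_i^{\A^*}(A)$ for every $i \geq 1$ and Itoh's inequality $e_3^{\A^*}(A) \geq 0$ applies directly to $e_3^\A(A)$. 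The plan is then, via the dual-filtration description of $H^d_\M(\ta)$, to write $e_3^\A(A)$ as a sum of non-negative integers, each of which is the total length of one of the graded modules $H^i_\M(L^\A(A))$ for $1 \leq i \leq d-1$. The hypothesis $e_3^\A(A) = 0$ then forces every summand, and hence each $H^i_\M(L^\A(A))$, to vanish.

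The main obstacle I anticipate is the precise accounting step: setting up the Itoh-type identity that expresses $e_3^\A(A)$ as a \emph{sum} (not merely an alternating sum) of non-negative length contributions, each identifiable with one of the targeted cohomology modules. Here the CI-approximation pays off decisively, because it replaces the abstract $H^d_\M(\ta)$ by an $\A$-stable filtration on a \CM\ module over a Gorenstein Rees ring, allowing Itoh's original integral-closure techniques, previously available only when the base ring is Gorenstein, to be transplanted to our general \CM\ setting. Once this identity is in place, the conclusion is immediate.
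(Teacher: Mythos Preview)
Your proposal hinges on an identity you never supply: expressing $e_3^\A(A)$ as a \emph{non-alternating} sum of the lengths $\ell(H^i(L^\A(A)))$ for $1\le i\le d-1$. You flag this yourself as ``the main obstacle,'' and indeed this is the gap --- no such identity is produced in the paper, nor is one known in this generality. The only length identity the paper establishes is the alternating relation $\sum_{i=0}^{2}(-1)^i\ell(H^i(G_\A(A)))=0$ in dimension three, which cannot by itself force vanishing. The dual-filtration and CI-approximation machinery is genuinely used, but not to manufacture a positivity formula for $e_3$; rather, it is used in a much more local and technical way.

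The paper's argument is structurally quite different from what you sketch. First, it reduces to $d=3$ by induction: after a general extension one finds a superficial element $x$ with $\A/(x)$ still asymptotically normal, and the long exact sequence~\ref{longH} together with the $*$-Artinian facts~\ref{Artin-vanish} and the finite length of $H^1(L)$ from~\ref{normal} handles $d\ge 4$. In dimension three, $H^2(L)=0$ is obtained directly from $a(G_\A(A))<0$ (via Hoa's bound, since $\red(\A^n)\le 2$ for $n\gg 0$) and the second fundamental sequence. The serious work is showing $H^2(G_\A(A))=0$, which forces $H^1(L)=0$. Here the CI-approximation $[T,\tf,\q,\psi]$ and the dual filtration $\eF$ on $\omega_A$ enter: one identifies $G(\eF(s-2),\omega_A)$ with the Matlis dual of $H^3(G_\A(A))$ (up to shift), passes modulo a superficial element, and uses the Veronese functor together with Ooishi's Gorenstein-power result to prove $H^1(G(\ov{\eH},\omega_B))_n=0$ for $n<0$. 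Dualizing the cohomology sequence of $0\to K\to G(-1)\to G\to \ov{G}\to 0$ then shows $H^2(G)/ztH^2(G)=0$, whence $H^2(G)=0$ by graded Nakayama. None of this is a positivity-of-$e_3$ argument; the hypothesis $e_3^\A(A)=0$ is consumed early, in showing $\red(\A^n)\le 2$ and hence $a(G)<0$.
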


Here is an overview of the contents of this paper. In section two we introduce notation and discuss a few
preliminary facts that we need. In section 3 we prove the result on complete intersection approximations  when $\A = \m$ and $A$ is a quotient of a regular local ring.
We also prove it when
$A$ is complete, $\A$ is $\m$-primary  and contains a field.
In section four we discuss dual filtrations
In section five we prove our result on complete intersection approximation of equimultiple ideals.
In section six we discuss some preliminaries on $L^\A(A)$ that we need to prove Itoh's conjecture.
Finally in our last section we prove Itoh's conjecture for normal ideals.

\section{preliminaries}

In this paper all rings are commutative Noetherian and all modules  (unless stated otherwise) are assumed to
be finitely generated. Let $A$ be a local ring, $\A$ an ideal in $A$ and let $N$ be an $A$-module. Then set
 $\ell(N)$ to be length of $N$ and $\mu(N)$ the number of minimal generators of $N$.

\s Let $(A,\m)$ be a    local ring of dimension $d$,
 $\A$  an $\m$-primary ideal in $A$ and let $M$ be an $A$-module   of dimension $r$. Set $G_{\A}(A) = \bigoplus_{n \ge 0} \A^n/\A^{n+1} $;  the
\textit{associated graded ring} of $A$ \wrt \ $\A$   and let $G_\A(M)= \bigoplus_{n \ge 0} \A^nM/\A^{n+1} M$ be the \textit{associated graded module} of $M$ \wrt \ $\A$.

The Hilbert-Samuel function of $M$ with respect to $\A$ is the function
\[
n \mapsto \ell(M/\A^{n+1}M)\quad \text{for all} \ n \geq 0.
\]

It is well known that for large values of $n$ it is given by a polynomial $P^{\A}_{M}(n)$ of
degree $r = \dim M$, called the Hilbert-Samuel polynomial of $M$ with respect to $\A$.
It can be written in the form
\[
P^{\A}_{M}(X) = \sum_{i=0}^{r}(-1)^ie_{i}^{\A}(M)\binom{X + r - i}{r-i}
\]

The integers $e_{0}^{\A}(M), e_{1}^{\A}(M), ..., e_{r}^{\A}(M)$ are
called the \emph{Hilbert coefficients} of $M$ with
respect to $\A$. The number $e_{0}^{\A}(M)$ is also called the \emph{multiplicity} of $M$ with respect to $\A$. The existence of the Hilbert-Samuel polynomial is equivalent to the fact that
the formal power series $\sum_{n \geq 0}\ell(M/\A^{n+1}M)z^n$
represents a rational function of a
special type:
\begin{align*}
\sum_{n \geq 0}\ell(M/\A^{n+1}M)t^n &= \frac{h^{\A}_{M}(t)}{(1-t)^{r+1}}\quad \text{where}
\ r = \dim M \ \text{and}\\
h_{M}^{\A}(t) &= h_{0}^{\A}(M) + h_{1}^{\A}(M)t + \cdots + h_{s}^{\A}(M)t^s \in
\mathbb{Z}[t]
\end{align*}

\s\label{hilb-gt-d} If $f$ is a polynomial the we set $f^{(i)}$ to denote the $i^{th}$ formal derivative of $f$. It can be easily verified that
$$ e_{i}^{\A}(M) = \frac{{h_{M}^\A}^{(i)}(1)}{i!} $$
for $i = 0, \ldots,r$.  It is also convenient to set
$$e_{i}^{\A}(M) = \frac{{h_{M}^\A}^{(i)}(1)}{i!}  \quad \text{for all} \ i \geq 0.$$

\s \label{mod-N}
Let $\eG = {\{M_n\}}_{n \in \Z}$ be an $\A$-filtration on $M$ and let $N$ be a submodule of $M$. By the \emph{quotient filtration} on $M/N$ we mean the filtration   $\ov{\eG} = \{ (M_n + N)/N \}_{\nZ}$.
If $\eG$ is an $\A$-stable filtration on
$M$ then $\ov{\eG}$ is an $\A$-stable filtration on
$M/N$.\textit{ Usually} for us $N =\xb M$ for some $\xb = x_1,\ldots,x_s \in \A$.

\s If $\eG = \{M_n\}_{n \in \Z}$ is an $\A$ stable filtration on $M$ then  set
$\eR(\eG,M) = \bigoplus_{n \in \Z}M_nt^n$ the \emph{extended Rees-module} of $M$
\emph{\wrt }\ $\eG$.
 Notice that $\eR(\eG,M)$ is a finitely generated graded $\ra$-module. If $\eG$ is the usual $\A$-adic filtration then
set \\  $ \eR(\A, M) = \eR(\eG,M)$.

Set $G(\eG,M)=\bigoplus_{n \in \Z} M_n/M_{n+1}$, the \emph{associated graded
module }of $M$ \emph{\wrt} \ $\eG$. Notice $G(\eG,M)$ is a finitely generated graded module over
 $G_{\A}(A)$.
 Furthermore $\eR(\eG,M) /t^{-1}\eR(\eG,M) = G(\eG, M)$.

\s \label{shift-filt}
Let $\eG = {\{M_n\}}_{n \in \Z}$ be an $\A$-filtration on $M$ and let $s \in \Z$. By the $s$-\emph{th shift} of $\eG$, denoted by $\eG(s)$ we mean the filtration ${\{ \eG(s)_n\}}_{n \in \Z}$ where
$\eG(s)_n = \eG_{n+s}$. Clearly
$$G(\eG(s), M) = G(\eG, M)(s) \quad \text{and} \quad \R(\eG(s), M) = \R(\eG, M)(s). $$

\s All   filtration's in this paper $\eG = \{M_n\}_{n \in \Z}$ will be \textit{separated} i.e.,
 $\bigcap_{\nZ}M_n = \{0\}$.
This is automatic if $A$ is local,  $\A \neq A$ and $\eG$ is $\A$-stable.
If $m$ is a non-zero element of $M$ and if $j$ is the largest integer such that
$ m \in M_j$,
then we let $m^{*}_{\eG}$ denote the image of $m $ in $M_j  \setminus M_{j+ 1}$ and we call it the \textit{initial form} of $m$ \wrt \ $\eG$. If $\eG$ is clear from the context then we drop the subscript $\eG$.

\s \emph{Veronese functor:}. Let $\eF = \{M_n \}_{n\in \Z}$ be an $\A$-stable filtration. For $l \geq 1$ set $\eF^{<l>}= \{M_{nl} \}_{n \in \Z}$. Notice $\eF^{<l>}$ is an $\A^l$-stable filtration. We also have an $\ral$-isomorphism $\eR(\eF,M)^{<l>} \cong \eR(\eF^{<l>}, M)$.

\s For definition and  basic properties of superficial sequences see \cite[p.\ 86-87]{Pu1}.

\s\label{AtoA'} \textbf{Flat Base Change:} In our paper we do many flat changes of rings.
 The general set up we consider is
as follows:

 Let $\phi \colon (A,\m) \rt (A',\m')$ be a flat local ring homomorphism
 with $\m A' = \m'$. Set $\A' = \A A'$ and if
 $N$ is an $A$-module set $N' = N\otimes_A A'$. Set $k = A/\m$ and $k' = A'/\m'$.

 \textbf{Properties preserved during our flat base-changes:}

\begin{enumerate}[\rm (1)]
\item
$\ell_A(N) = \ell_{A'}(N')$. So
 $H_{\A}(M,n) = H_{\A'}(M',n)$ for all $n \geq 0$.
\item
$\dim M = \dim M'$ and  $\grade(K,M) = \grade(KA',M')$ for any ideal $K$ of $A$.
\item
$\depth G_{\A}(M) = \depth G_{\A'}(M')$.
\end{enumerate}

\textbf{Specific flat Base-changes:}

\begin{enumerate}[\rm (a)]
\item
$A' = A[X]_S$ where $S =  A[X]\setminus \m A[X]$.
The maximal ideal of $A'$ is $\n = \m A'$.
The residue
field of $A'$ is $k' = k(X)$. Notice that $k'$ is infinite.
\item
$A' =  \widehat{A}$  the completion of $A$ \wrt \ the maximal ideal.
\item
Applying (a) and (b) successively ensures that $A'$ is complete with $k'$ infinite.
\item
$A' = A[X_1,\ldots,X_n]_S$ where $S =  A[X_1,\ldots,X_n]\setminus \m A[X_1,\ldots,X_n]$.
The maximal ideal of $A'$ is $\n = \m A'$.
The residue
field of $A'$ is $l = k(X_1,\ldots,X_n)$. Notice that if $I$ is integrally closed then $I'$ is
also integrally closed. Recall an ideal $K$ is said to be asymptotically normal if  $K^n$ is integrally closed for all $n \gg 0$.
When $\dim A \geq 2$, $A$ is  \CM \ and $I$ is asymptotically normal, say $I = (a_1,\cdots,a_n)$ then in \cite[Corollary 2]{Ciu} it is proved that
the element $y = \sum_{i = 1}^{n} a_iX_i$ in $A'$ is $I'$-superficial and
the $A'/(y)$ ideal $J = I'/(y)$ is asymptotically normal. We call $A'$ a general extension of $A$.
\end{enumerate}

\s \label{lying-above} We will need the following result. Let $\phi \colon (B,\n) \rt (A.\m)$ be a local map such that $A$ is finitely generated $B$-module (via $\phi$). Consider
$R = B[X_1,\cdots,X_n]$ and $S = A[X_1,\cdots. X_n]$ and let $\psi \colon R \rt S$ be the map induced by $\phi$. Note $S$ is a finite $R$-module via $\psi$.
Let $P = \n R$ and $Q = \m S$. Then $S_P = S_Q$. To see this note that $Q$ is the only prime in $S$ lying over $P$. The result follows from an exercise problem
in \cite[Exercise 9.1]{Ma}.

\s\label{equimultiple-defn}  The \emph{fiber cone} of $\A$ is the
$k$-algebra
$F(\A) = \bigoplus_{n\geq 0}\A^n/\m \A^{n}.$
Set $\spr(\A) = \dim F(\A)$,
the \emph{analytic spread} of $\A$.
 Recall $\dim A \geq \spr(\A) \geq \htt \A$.
We say $\A$ is\textit{ equimultiple} if
$\spr(\A) = \htt \A$. Clearly $\m$-primary ideals are equimultiple.

\s \label{redNO} We assume $k = A/\m$ is infinite. Let $\C = (x_1,\ldots,x_l)$ be  a minimal reduction of
$\A$ \wrt \ $M$.  We denote by  $\red_\C(\A, M) := \min \{ n  \mid \  \C \A^n M= \A^{n+1}M \} $
the \textit{reduction number} of $\A$ \wrt \ $\C$ and $M$. Let
 $$\red(\A, M) = \min \{ \red_\C(\A, M) \mid \C
\text{\ is  a minimal reduction of \ }  \A \}$$
 be the \textit{reduction number } of $M$ \wrt \ $\A$. Set
$\red_\A(A) = \red(\A, A)$.

\s \label{gen-cm} Let $R= \bigoplus_{n\geq 0}R_n$ be a standard graded algebra over an Artin local ring $(R_0,\m_0)$. Set $\M = \m_0 \oplus R_+$. We say that $R$ is generalized \CM \ if $H^i_{\M}(R)$ has finite length for $i < \dim R$.

\section{CI-Approximation: Some special cases}\label{Gorapp-special}
In this section we prove that if $(A,\m)$ is a quotient of a regular local ring then
$[A,\m,\m]$ has a CI-approximation $[B,\n,\n, \phi]$ with $\phi$ onto. In Theorem \ref{equi} we  state our main result regarding CI-approximations. This will be proved in section \ref{sectionGAPP} . Finally in Theorem \ref{field} we prove a result regarding CI-approximation of an $\m$-primary ideal $\A$ in a complete  equicharacteristic
local ring $(A,\m)$.
The following Lemma  regarding annihilators is crucial.

\begin{lemma}
\label{annihilator}
Let $M$ be an $A$-module and let $\A$ be an ideal. Set $G = \GA$.
 Suppose there exists
 $\xi_1,\ldots,\xi_s \in \A$ such that $\xi_{i}^{*} \in \ann_{G} G_{\A}(M)$
for each $i$. Also assume that
 $\xi_{1}^{*},\ldots,\xi_{s}^{*}$ is a $G$-regular sequence.
 Then there exists $u_1,\ldots,u_s \in \A$ such that
\begin{enumerate}[\rm (1)]
\item
$u_1,\ldots,u_s$ is an $A$-regular sequence.
\item
$u_i \in \ann M$ for each $i = 1,\ldots,s$.
\item
For each $i \in \{1,\ldots,s \}$ there exists $n_i \geq 1$ such
 that $u_{i}^{*} = (\xi_{i}^{*})^{n_i}$.
\item
 $u_{1}^{*},\ldots,u_{s}^{*}  \in  \ann_G G_\A(M).$
\item
 $u_{1}^{*},\ldots,u_{s}^{*} $ is a  $G$-regular sequence.
\end{enumerate}
\end{lemma}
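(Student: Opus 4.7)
The plan is to produce each $u_i$ separately by a Cayley--Hamilton / determinant trick applied to the single element $\xi_i$ acting on $M$, and then to read off properties (1) and (5) from standard regular-sequence facts.

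Let $d_i$ denote the $\A$-adic order of $\xi_i$, so $\xi_i^* \in G_{d_i}$. The hypothesis $\xi_i^* \cdot G_\A(M) = 0$ translates to $\xi_i \A^n M \subseteq \A^{n+d_i+1} M$ for all $n \geq 0$; taking $n=0$ and a finite generating set $m_1,\ldots,m_k$ of $M$, there exist $b^{(i)}_{j\ell} \in \A^{d_i+1}$ with $\xi_i m_j = \sum_\ell b^{(i)}_{j\ell} m_\ell$. Setting $B_i := (b^{(i)}_{j\ell})$, define
\[
u_i := \det(\xi_i I_k - B_i) \in A.
\]
The determinant trick forces $u_i \cdot M = 0$, which is (2). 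Expanding the determinant by permutations, the identity permutation contributes $\xi_i^k$ modulo $\A^{kd_i+1}$, while every other permutation already yields a term in $\A^{kd_i+2}$ (at least two of its factors come from off-diagonal entries in $\A^{d_i+1}$). Since $(\xi_i^*)^k \neq 0$ (because $\xi_i^*$ is $G$-regular), this shows $u_i \in \A^{kd_i}$ with $u_i^* = (\xi_i^*)^k \in G_{kd_i}$, giving (3) with $n_i := k$; condition (4) is then immediate from $\xi_i^* \cdot G_\A(M) = 0$.

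Condition (5) follows from the standard fact that powers of a regular sequence in a Noetherian (graded) ring form a regular sequence: applied to $\xi_1^*,\ldots,\xi_s^*$ with exponents all equal to $k$, we conclude that $u_1^*,\ldots,u_s^*$ is $G$-regular. Condition (1) is then obtained from (5) by induction on $i$: given that $u_1,\ldots,u_{i-1}$ is $A$-regular, the Valabrega--Valla theorem applies because their initial forms are $G$-regular, yielding $G_\A(A/(u_1,\ldots,u_{i-1})) \cong G/(u_1^*,\ldots,u_{i-1}^*)$; regularity of $u_i^*$ on this quotient then lifts to regularity of $u_i$ on $A/(u_1,\ldots,u_{i-1})$ via separatedness of the $\A$-adic filtration.

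The main obstacle is the degree bookkeeping in the determinant expansion: one must verify that the identity-permutation term provides the unique leading $\A$-adic contribution and that every non-identity permutation strictly raises the filtration order, so that $u_i^* = (\xi_i^*)^{n_i}$ holds exactly in $G$ rather than only up to lower-order correction. Everything else reduces to regular-sequence preservation under taking powers and the standard lift from $G$-regular sequences to $A$-regular sequences via Valabrega--Valla and Krull separatedness.
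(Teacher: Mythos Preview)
Your proof is correct and follows essentially the same route as the paper: construct each $u_i$ from $\xi_i$ via the determinant trick so that $u_i \in \ann M$ and $u_i^* = (\xi_i^*)^{n_i}$, then deduce (4) immediately, (5) from the fact that powers of a regular sequence remain regular, and (1) by lifting $G$-regularity to $A$-regularity via Valabrega--Valla. The only cosmetic differences are that the paper invokes the determinant trick in the packaged form $f(\xi_i)=\xi_i^{n}+a_1\xi_i^{n-1}+\cdots+a_n$ with $a_j\in\A^{j(d_i+1)}$ (so the order computation $a_j\xi_i^{n-j}\in\A^{nd_i+j}$ is a one-liner), whereas you expand the characteristic polynomial by permutations; and the paper cites \cite[2.3]{VV} directly for (1), while you unpack that citation as the inductive lift through $G_\A(A/(u_1,\ldots,u_{i-1}))\cong G/(u_1^*,\ldots,u_{i-1}^*)$.
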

\begin{proof}
Suppose we have constructed $u_1,\ldots,u_s$ satisfying (2) and (3) then
 $u_1,\ldots,u_s$ satisfy all the remaining properties. This can be seen as follows:

(4) follows from (3); since $\xi_{i}^{*} \in \ann_{G} G_{\A}(M)$ for each $i$.

(1) and (5). As  $\xi_{1}^{*},\ldots,\xi_{s}^{*}$ is a $G$-regular sequence
 we also get
$(\xi_{1}^{*})^{n_1},\ldots,(\xi_{s}^{*})^{n_s}$ is a $G$-regular sequence
\cite[16.1]{Ma}.
Thus $u_{1}^{*},\ldots,u_{s}^{*} $ is a  $G$-regular sequence. It follows from   \cite[2.3]{VV} that
$u_1,\ldots,u_s$ is an $A$-regular sequence.
Thus it suffices to show there exists $u_1,\ldots,u_s$ satisfying (2) and (3).

Fix $i \in \{1,\ldots,s \}$.
Set $\xi = \xi_i$.
  Say $\xi \in \A^r \setminus \A^{r+1}$ for some $r \geq 1$.
Since $\xi^* \in \ann_G G_{\A}(M)$ we have $\xi M \subseteq \A^{r+1}M$.
Set $\q = \A^{r+1}$.
By the \emph{determinant trick}, \cite[2.1]{Ma}, there exists a monic polynomial $f(X) \in A[X]$ such that
\begin{align*}
f(X) &= X^{n} + a_1X^{n-1} + \ldots + a_{n-1}X + a_n \quad \text{with} \quad a_i \in \q^i, \text{\ for} \ i = 1,\ldots n \\
\text{and} \ u = f(\xi) &= \xi^{n} + a_1\xi^{n-1}+ \ldots +  a_{n-1}\xi + a_n \in \ann M.
\end{align*}
As $\xi^*$ is $G$-regular we have $\xi^n \in \A^{nr} \setminus \A^{nr+1}$.
 However  for each $i \geq 1$ we
 have
\[
a_i\xi^{n-i} \in \q^i\A^{(n-i)r} = \A^{i(r+1)}\A^{nr - ir} = \A^{nr + i} \subseteq
 \A^{nr +1}.
\]
Thus $u^* = (\xi^{n})^{*} = (\xi^*)^n$ (since $\xi^*$ is $G$-regular).
Set $u_i = u$ and $n_i = n$.
\end{proof}

The following Corollary is useful.
\begin{corollary}\label{redTOmaxDIM}
 Let $(A,\m)$ be \CM \ local ring, $\A$ an $\m$-primary ideal and let $M$ be an $A$-module. Set
$c = \dim A - \dim M$.  If $\GA$ is \CM \ then there exists $u_1,\ldots, u_c \in \A$ such that
\begin{enumerate}[\rm (1)]
 \item
$ u_1,\ldots, u_c \in \ann_A M $.
 \item
 $ u_{1}^*,\ldots, u_{c}^* \in \ann_{\GA} G_\A(M)$.
 \item
 $ u_1,\ldots, u_c $ is an $A$-regular sequence.
\item
$ u_{1}^*,\ldots, u_{c}^*$ is a $\GA$-regular sequence.
\item
 $u_1 t,\ldots, u_c t \in \ann_{\ra } \R(M).$
\item
$u_1 t,\ldots, u_c t$ is a $\ra$-regular sequence.
\end{enumerate}
\end{corollary}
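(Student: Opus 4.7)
The strategy is to reduce everything to Lemma \ref{annihilator} by producing a homogeneous regular sequence inside the graded annihilator ideal $I := \ann_{G} G_\A(M)$, where $G := G_\A(A)$.

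\textbf{Step 1 (Producing the initial forms).} Since $\A$ is $\m$-primary, $\dim G_\A(M) = \dim M = d - c$, so $\dim G/I = d - c$. As $G$ is Cohen--Macaulay of dimension $d$, the graded ideal $I$ has height $c$ and $\grade(I,G) = c$. Hence $I$ contains a regular sequence of length $c$ on $G$. After the harmless flat extension \ref{AtoA'}(a)--(c) to ensure the residue field is infinite (which preserves all relevant hypotheses), graded prime avoidance produces homogeneous $\xi_1^{*},\dots,\xi_c^{*} \in I$ that form a $G$-regular sequence. Each $\xi_i^{*}$ has positive degree $r_i \geq 1$: a nonzerodivisor of degree $0$ in $G$ would be a unit of $G_0 = A/\A$ (which is Artinian), forcing $I = G$ and hence $G_\A(M)=0$, contradicting $\dim M = d - c \geq 0$ and $M \neq 0$.

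\textbf{Step 2 (Invoking Lemma \ref{annihilator}).} Lift each $\xi_i^{*}$ to $\xi_i \in \A^{r_i} \setminus \A^{r_i+1}$. The hypotheses of Lemma \ref{annihilator} hold, so it supplies elements $u_1,\dots,u_c \in \A$ with $u_i \in \ann M$, $u_i^{*} = (\xi_i^{*})^{n_i}$, and such that both $u_1,\dots,u_c$ is an $A$-regular sequence and $u_1^{*},\dots,u_c^{*}$ is a $G$-regular sequence. This gives assertions (1)--(4) of the corollary immediately.

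\textbf{Step 3 (Rees algebra assertions).} For (5): since $u_i M = 0$, we have $u_i \A^n M = \A^n (u_i M) = 0$, so $u_i t$ annihilates $\R(M) = \bigoplus_{n} \A^n M\, t^n$, i.e.\ $u_i t \in \ann_{\ra} \R(M)$. For (6): the Valabrega--Valla criterion applied to the $G$-regular sequence $u_1^{*},\dots,u_c^{*}$ yields $(u_1,\dots,u_j) \cap \A^n = \sum_{i \leq j} u_i \A^{\,n - d_i}$ (with $d_i = r_i n_i$) for all $j \leq c$ and all $n$. Combined with the short exact sequence
\[
0 \xar t^{-1}\ra \xar \ra \xar G \xar 0
\]
and the fact that $t^{-1}$ is a nonzerodivisor on $\ra$, the standard lifting of regular sequences modulo a regular element shows that the sequence $u_1 t, \dots, u_c t$ is $\ra$-regular.

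\textbf{Main obstacle.} The only nontrivial step is extracting, inside the homogeneous ideal $\ann_G G_\A(M)$, a regular sequence of length $c$ whose elements have \emph{positive} degree; the Cohen--Macaulay hypothesis on $G$ is precisely what makes the height and grade agree, and the positivity of the degrees is needed both to lift the $\xi_i^{*}$ back into $\A$ and to run the determinant trick in Lemma \ref{annihilator}. After that, conclusions (5)--(6) are formal consequences of the interplay between $G$ and $\ra$ governed by Valabrega--Valla.
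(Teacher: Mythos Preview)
Your overall approach matches the paper's: compute $\grade(\ann_G G_\A(M),G)=c$ via Cohen--Macaulayness of $G$, extract a homogeneous $G$-regular sequence $\xi_1^*,\dots,\xi_c^*$ from this annihilator, feed it into Lemma~\ref{annihilator}, and then pass to $\ra$ for (5)--(6).

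There is one genuine gap. The flat extension in Step~1 is both unnecessary and, as written, breaks the proof: the $u_i$ you produce after base change live in $A'$, not in $A$, and there is no descent mechanism to bring them back---yet the corollary asserts the existence of elements in $\A\subset A$. The fix is simply to delete the base change. Because $\A$ is $\m$-primary, $G_0=A/\A$ is Artinian local, so every element of $I_0=(\ann_G G_\A(M))_0\subset\m/\A$ is nilpotent in $G$; hence $I$ and the ideal $J$ generated by $I_{\geq 1}$ have the same radical and the same grade $c$. Homogeneous prime avoidance in the form \cite[1.5.11]{BH} (which the paper itself invokes in the Artinian-base case of Theorem~\ref{hann}) then produces the desired homogeneous $G$-regular sequence of positive degree in $J\subset I$, with no hypothesis on the residue field.

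For (6), the paper's argument is shorter than your Valabrega--Valla route: since $t^{-1}$ is $\ra$-regular and the images of the $u_i$ (in their natural degrees) in $G=\ra/t^{-1}\ra$ form the $G$-regular sequence $u_1^*,\dots,u_c^*$, one gets that $t^{-1},u_1t,\dots,u_ct$ is $\ra$-regular; then the $*$-local property of $\ra$ lets one permute and drop $t^{-1}$. Your argument reaches the same conclusion, just with more machinery.
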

\begin{proof}
 Since $\GA$ is \CM, we have that
\begin{align*}
  \grade  \ann_{\GA} G_\A(M)  &= \htt \ann_{\GA} G_\A(M) \\
                           &=  \dim \GA  - \dim G_{\A}(M),  \quad \text{since $\GA $ is *-local} \\
                           &= \dim A - \dim M .
\end{align*}
Therefore (1), (2), (3) and (4) follow from Lemma \ref{annihilator}.
The assertion (5) is clear.

(6). Since $t^{-1}$ is $\ra$-regular and $u_1^*,\ldots, u_c^*$ is $\GA = \ra/t^{-1}\ra$-regular sequence it follows that $t^{-1}, u_1t,\ldots, u_ct$ is  a $\ra$-regular sequence. Notice
$\ra$ is a *-local. It follows that $ u_1t,\ldots, u_ct$ is a $\ra$-regular sequence.
\end{proof}

We state our general result regarding CI-Approximations
\begin{theorem}\label{equi}
Let $(A,\m)$ be a complete
 local ring and let $\A$ be a proper  ideal in $A$ with $\dim A/\A + \spr(\A)
 = \dim A$. Then
$A$ has a CI-approximation \wrt \ $\A$. Furthermore if $\A = (x_1,\ldots,x_m)$ then we may
choose
a CI-approximation $[B,\n,\B,\varphi]$ of $[A,\m,\A]$ such that there exists $v_1,\ldots,v_m \in \B$ with $\varphi(v_i) = x_i$ for  $i = 1,\ldots, m$ and $\B = ( v_1,\ldots,v_m)$.
\end{theorem}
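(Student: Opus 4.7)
The plan is to realize $A$ as a module-finite extension of a complete intersection obtained by quotienting an auxiliary large regular local ring by a regular sequence whose initial forms are also a regular sequence in the appropriate associated graded ring.

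I would first use Cohen's structure theorem to choose a Noether normalization $R_0 \hookrightarrow A$: a complete regular local subring of dimension $d$ over which $A$ is module-finite (in the equicharacteristic case, $R_0 = k[[z_1,\ldots,z_d]]$ for any system of parameters $z_1,\ldots,z_d$ and any coefficient field $k$; in mixed characteristic one uses a coefficient DVR). Form the regular local ring $\tilde B := R_0[[V_1,\ldots,V_m]]$ of dimension $d+m$ and define $\psi\colon \tilde B \to A$ by extending $R_0 \hookrightarrow A$ via $V_i \mapsto x_i$. Put $J := (V_1,\ldots,V_m) \subset \tilde B$; then $\psi(J)A = \A$, $A$ is a finite $\tilde B$-module, and $\ann_{\tilde B}(A) = \ker\psi$ has height exactly $m = \dim \tilde B - \dim A$.

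The heart of the argument lies on the graded side. The associated graded ring $G_J(\tilde B) = R_0[T_1,\ldots,T_m]$ is a polynomial ring over a regular local ring, hence \CM \ (even $*$-regular at its $*$-maximal ideal). The module $\GA$ is a finitely generated $R_0[T]$-module of Krull dimension $d$, so its annihilator $I \subset R_0[T]$ is a homogeneous ideal of height $(d+m)-d = m$. The decisive observation is that $I \cap R_0 = 0$: for $r \in R_0$ annihilating $\GA$, the relations $r\A^n \subseteq \A^{n+1}$ for all $n$ inductively give $r^k A \subseteq \A^k$, hence $r^k \in \bigcap_n \A^n = 0$ by the Krull intersection theorem; since $R_0 \hookrightarrow A$ is injective and $R_0$ is a domain, $r = 0$. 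Consequently $I \subseteq (T_1,\ldots,T_m)$, so any homogeneous regular sequence $\xi_1^*,\ldots,\xi_m^*$ drawn from $I$ (which exists because $R_0[T]$ is $*$-\CM \ and $\htt I = m$) automatically consists of positive-degree elements.

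Lifting $\xi_i^*$ to $\xi_i \in J^{\deg \xi_i^*} \subset \tilde B$ and applying Lemma \ref{annihilator} with $(A,\A,M)$ there being $(\tilde B, J, A)$ produces $u_1,\ldots,u_m \in J$ with $u_i \in \ann_{\tilde B}(A)$, each $u_i^* = (\xi_i^*)^{n_i}$, such that $(u_1,\ldots,u_m)$ is a $\tilde B$-regular sequence and $(u_1^*,\ldots,u_m^*)$ is a $G_J(\tilde B)$-regular sequence. Setting $B := \tilde B/(u_1,\ldots,u_m)$, $\B := JB$, $v_i := V_i + (u_1,\ldots,u_m)$, and $\varphi \colon B \to A$ induced by $\psi$, one checks immediately that $B$ is a complete local CI of dimension $d = \dim A$, that $A$ is a finite $B$-module, that $\varphi(\B)A = \A$, that $\B = (v_1,\ldots,v_m)$ with $\varphi(v_i)=x_i$, and that the regular-sequence property of the $u_i^*$ yields, via the standard Valabrega--Valla identity, $G_\B(B) = R_0[T_1,\ldots,T_m]/(u_1^*,\ldots,u_m^*)$, a graded CI. The main obstacle is the positive-degree assertion above: without it the $u_i$ from Lemma \ref{annihilator} might fail to lie in $J$, leaving only a CI-approximation of some reduction of $\A$ rather than of $\A$ itself; the Krull intersection identity $I \cap R_0 = 0$ is precisely what overcomes this.
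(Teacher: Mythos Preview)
Your Krull-intersection step is incorrect, and the claim it is meant to justify is false. From $r\A^n\subseteq\A^{n+1}$ for all $n$ you get only $r^k\in\A^k$ for each $k$, not $r^k\in\bigcap_n\A^n$; there is no reason for $r$ to be nilpotent. A concrete counterexample: take $A=k[[x,y]]$, $\A=(x)$ (so $\dim A/\A+\spr(\A)=1+1=2=\dim A$), and $R_0=A$ as Noether normalization. Then $G_J(\tilde B)=A[T]$ and $\GA\cong(A/(x))[T]$, so $I=\ann_{A[T]}(\GA)=(x)$ and $I\cap R_0=(x)\neq 0$. Thus the assertion $I\cap R_0=0$, which you identify as ``precisely what overcomes'' the positive-degree obstacle, simply fails. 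Without it, nothing in your argument forces the homogeneous regular sequence $\xi_1^*,\ldots,\xi_m^*$ to lie in $R_+$; and Example~\ref{eqEx} in the paper shows that over a non-Artinian base the existence of a full-length homogeneous regular sequence inside $R_+\cap\ann M$ is a genuine issue, not a formality.

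The paper resolves this by a different choice of the large ring. Instead of adjoining the $V_i$ to a Noether normalization of $A$, it adjoins them to $k[[Y_1,\ldots,Y_s]]$ (or a Cohen DVR analogue) where $y_1,\ldots,y_s$ is a system of parameters of $A/\A$, so that the degree-zero part of the associated graded ring has the \emph{same} dimension as $(\GA)_0=A/\A$. That matching of dimensions is exactly hypothesis~(3) of Theorem~\ref{hann}, and the proof of Theorem~\ref{hann} then manufactures the required positive-degree regular sequence by an inductive determinant-trick argument, reducing to the Artinian-base case where $\grade(\ann M,R)=\grade(\ann M\cap R_+,R)$. In mixed characteristic one extra element $\Delta$ is needed to correct a one-off dimension discrepancy before Theorem~\ref{hann} applies. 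Your setup, with $\dim R_0=d>\dim A/\A$, does not fit Theorem~\ref{hann}, so even after discarding the faulty $I\cap R_0=0$ claim you would need a separate argument (e.g.\ analyzing the minimal primes of $I\cap R_+$ in the polynomial ring $R_0[T]$) to salvage the construction.
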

Theorem \ref{equi} is proved in section \ref{sectionGAPP}.
\begin{remark}\label{equiREM}
 The hypothesis of the theorem holds  when $\A$ is $\m$-primary. It is also satisfied when $\A$ is equimultiple and
$A$ is quasi-unmixed.   Our hypothesis ensures that $\GA$ has a homogeneous system of parameters,
  \cite[2.6]{HUO}.
\end{remark}

For the case when $\A = \m$ and $A$ is a quotient of a regular local ring $T$ we have:
\begin{theorem}\label{GorApprox-REG-QT}
 Let $(A,\m)$  be a local ring such that $A$ is a quotient of a regular local ring $(T,\tf)$. Then
$[A,\m,\m]$ has a CI-approximation $[B,\n,\n, \phi]$ with $\phi$-onto.
\end{theorem}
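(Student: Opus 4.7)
The strategy I would pursue is to construct $B$ inside $T$ by cutting down via a regular sequence in $\tf$ whose initial forms also form a regular sequence in $\GT$. Write $A = T/J$ for some ideal $J \subseteq \tf$; since $T$ is regular, $c := \dim T - \dim A = \htt J$. My first step is to apply Corollary \ref{redTOmaxDIM} to the \CM \ local ring $(T, \tf)$, with $\tf$-primary ideal $\tf$ itself and $T$-module $M = A$. The hypothesis that $\GT$ is \CM \ is trivially satisfied, since $\GT \cong k[Y_1, \ldots, Y_n]$ is a polynomial ring over the residue field $k = T/\tf$. The corollary produces elements $u_1, \ldots, u_c \in \tf$ that lie in $\ann_T A = J$, form a $T$-regular sequence, and whose initial forms $u_1^{*}, \ldots, u_c^{*}$ form a $\GT$-regular sequence.

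Next I would set $I = (u_1, \ldots, u_c)$, $B = T/I$, $\n = \tf/I$, and take $\phi \colon B \to A$ to be the surjection induced by the quotient $T \twoheadrightarrow A$, which is well-defined because $I \subseteq J$. This is a local surjection with $\phi(\n)A = \m$, and $A$ is automatically a finite $B$-module. Since $T$ is regular and $u_1, \ldots, u_c$ is $T$-regular, $B$ is a complete intersection with $\dim B = \dim T - c = \dim A$. To check that $G_\n(B)$ is a complete intersection, I would invoke Valla's theorem on initial ideals (equivalently, use the $\eR_\tf(T)$-regularity of $u_1 t, \ldots, u_c t$ from part (6) of the corollary together with the short exact sequence $0 \to \eR_\tf(T) \xrightarrow{t^{-1}} \eR_\tf(T) \to \GT \to 0$) to conclude that $G_\n(B) \cong \GT/(u_1^{*}, \ldots, u_c^{*})$. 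This is a quotient of the polynomial ring $\GT$ by a homogeneous regular sequence, hence a graded complete intersection. Therefore $[B, \n, \n, \phi]$ is the required CI-approximation of $[A, \m, \m]$ with $\phi$ surjective.

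The real work is absorbed into Corollary \ref{redTOmaxDIM} and the underlying Lemma \ref{annihilator}, which use the determinant trick to lift annihilators of $G_\tf(A)$ in $\GT$ to actual elements of $\ann_T A = J$ while preserving the regular-sequence property of the initial forms. Once those elements are in hand, I expect the remaining verifications to be routine; the only point requiring any care is the identification of $G_\n(B)$ with $\GT/(u_1^{*}, \ldots, u_c^{*})$, which is standard given the regular-sequence assumption on initial forms.
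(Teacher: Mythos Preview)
Your proof is correct and follows essentially the same approach as the paper: apply Corollary \ref{redTOmaxDIM} to the $T$-module $A$ (using that $\GT$ is a polynomial ring, hence \CM) to obtain the regular sequence $u_1,\ldots,u_c$, then pass to $B = T/(u_1,\ldots,u_c)$. The paper's proof is terser and simply asserts that $[B,\n,\n,\phi]$ is a CI-approximation, whereas you spell out the verification that $G_\n(B) \cong \GT/(u_1^*,\ldots,u_c^*)$ via the regularity of the initial forms; this is indeed the point the paper is suppressing under ``clearly''.
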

\begin{proof}
 Set $k = A/\m = T/\tf$. Let $\psi \colon T \rt A$ be the quotient map. We consider $A$ as  a
$T$-module. So $ \GA = G_{\tf}(A)$. Set $c = \dim T - \dim A = \dim \GT - \dim \GA$.
Its well-known  $\GT \cong k[X_1,\ldots,X_n]$; where $n = \dim T$. In particular $\GT$ is
\CM. Let $u_1,\ldots u_c$ be as in Corollary \ref{redTOmaxDIM}.

Set $(B,\n) = (T/(\ub),\tf/(\ub))$ and let $\phi \colon B \rt A$ be the map induced by
$\psi$. So $\phi$ is onto. Clearly
$[B,\n,\n, \phi]$ is a CI-approximation  of $[A,\m,\m]$.
\end{proof}

\noindent For  $\m$-primary ideals in a complete  equi-characteristic complete local ring we prove:
\begin{theorem}\label{field}
 Let $(A,\m)$ be a complete
equicharacteristic local ring of dimension $d$ and let $\mathfrak{a}$ be an
$\m$-primary ideal.  Then $[A,\m,\mathfrak{a}]$ has a CI-approximation $[B,\n,\mathfrak{b},\phi]$ with $\B = \n$ and $B/\n \cong A/\m$ and  $\mu(\n) =
\mu(\A)$. If $\mu(\A) > d$  and let $r \geq 1$ then we can choose CI-approximations $[B',\n,\mathfrak{b'},\phi']$ with $\B' = \n$ and $B/\n \cong A/\m$ and  $\mu(\n) =
\mu(\A)$ such that $\red(\B', B') > r$.
\end{theorem}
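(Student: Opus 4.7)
My plan is to exhibit $A$ as a finite module over a power series ring $T$ and then cut $T$ down to the right dimension by a regular sequence supplied by Corollary~\ref{redTOmaxDIM}. Since $A$ is complete and equicharacteristic, Cohen's structure theorem provides a coefficient field $k \subseteq A$ with $k \cong A/\m$. Pick minimal generators $x_1,\ldots,x_m$ of $\A$ with $m = \mu(\A)$, set $T = k[[X_1,\ldots,X_m]]$ with maximal ideal $\tf = (X_1,\ldots,X_m)$, and define the local $k$-algebra map $\phi_0 \colon T \to A$ by $\phi_0(X_i) = x_i$. Because $\phi_0(\tf)A = \A$ is $\m$-primary and $A$ is complete, $A$ is a finite $T$-module of dimension $d$, and $\GT \cong k[Y_1,\ldots,Y_m]$ is Cohen-Macaulay.

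Set $c = m - d$. The ideal $I = \ann_{\GT} G_{\tf}(A)$ has height $c$, so graded prime avoidance in the Cohen-Macaulay ring $\GT$ yields a homogeneous regular sequence $\xi_1^{*},\ldots,\xi_c^{*} \in I$. Since powers of a regular sequence remain regular \cite[16.1]{Ma} and stay in the ideal $I$, we may raise each $\xi_i^{*}$ to a high enough power to arrange $\deg \xi_i^{*} \geq 2$. Applying Corollary~\ref{redTOmaxDIM} to the $T$-module $A$ produces $u_1,\ldots,u_c \in \tf$ such that each $u_i \in \ann_T A$, the sequence $u_1,\ldots,u_c$ is $T$-regular, the initial forms $u_i^{*} = (\xi_i^{*})^{n_i}$ form a $\GT$-regular sequence, and $u_i \in \tf^2$ (because $\deg u_i^{*} \geq 2$). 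Set $B = T/(u_1,\ldots,u_c)$, $\n = \tf B$, $\B = \n$, and let $\phi \colon B \to A$ be induced by $\phi_0$, which is well-defined since the $u_i$'s annihilate $A$. Then $B$ is a complete intersection of dimension $d$; $G_{\n}(B) = \GT/(u_1^{*},\ldots,u_c^{*})$ is a graded complete intersection; $A$ is a finite $B$-module with $\phi(\B)A = \A$; $B/\n \cong A/\m$; and $\mu(\n) = \mu(\tf) = m = \mu(\A)$ because $u_i \in \tf^2$. This establishes the first assertion.

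For the second assertion, assume $\mu(\A) > d$, so $c \geq 1$. By raising each $\xi_i^{*}$ to an even higher power before invoking Corollary~\ref{redTOmaxDIM}, we make $d_i := \deg u_i^{*} = n_i \deg \xi_i^{*}$ as large as we wish. The graded complete intersection $G_{\n}(B') = k[Y_1,\ldots,Y_m]/(u_1^{*},\ldots,u_c^{*})$ then has Hilbert series $\prod_i (1-t^{d_i})/(1-t)^m$, so its $h$-polynomial has degree $\sum_i d_i - c$. Since $G_{\n}(B')$ is Cohen-Macaulay, standard results on reduction numbers of Cohen-Macaulay associated graded rings identify $\red(\B',B')$ with (or at least bound it below by) this degree, so choosing $d_i \geq r+2$ yields $\red(\B',B') \geq c(r+1) > r$. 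The remaining axioms of a CI-approximation are verified exactly as in the first part.

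The main technical subtlety is this last step: translating the degrees of the relations defining $G_{\n}(B')$ into a genuine lower bound on $\red(\B',B')$. Over an infinite residue field a minimal reduction by linear forms exhibits the Artinian reduction with the expected socle degree, but handling the case of a finite residue field — which is allowed by the hypotheses — requires working with the $h$-polynomial and Castelnuovo-Mumford regularity of $G_{\n}(B')$ rather than with a literal minimal reduction. Granting this computation, the rest of the argument is a clean combination of Cohen's structure theorem with Corollary~\ref{redTOmaxDIM}.
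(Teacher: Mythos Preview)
Your argument is essentially the paper's: realize $A$ as a finite module over $S=k[[X_1,\dots,X_m]]$ via a coefficient field and a minimal generating set of $\A$, then quotient by the regular sequence supplied by Corollary~\ref{redTOmaxDIM} (more precisely, the step where you feed in pre-chosen $\xi_i^{*}$ and extract $u_i$ with $u_i^{*}=(\xi_i^{*})^{n_i}$ is Lemma~\ref{annihilator}, not the corollary). Two small remarks: your degree-raising to force $u_i\in\tf^{2}$ is harmless but unnecessary, since for a \emph{minimal} generating set no nonzero linear form of $G_{\tf}(T)$ can annihilate $1\in G_{\tf}(A)_0$; and for the second assertion the paper simply replaces $u_i$ by $u_i^{s}$ for $s\gg 0$, while your finite-residue-field concern is moot because the paper (see~\ref{redNO}) only defines $\red(-,-)$ under the standing assumption that $A/\m$ is infinite.
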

\begin{proof}
  $A$  contains its residue field $k = A/\m.$; see \cite[28.3]{Ma}.  Let
$\mathfrak{a} = (x_1, \ldots, x_n).$  Set $S = k [|X_1,\ldots X_n|]$
and let $\psi : S \rightarrow A$ be the natural map which sends
$X_i$ to $x_i$ for each $i.$
Since $A/(\underline{X}) A = A/\mathfrak{a}$ has finite
length (as an $S$-module) it follows that $A$ is a finitely
generated as an S-module, see \cite[8.4]{Ma}.  Set $\eta = (X_1, \ldots
X_n).$ Notice $\psi (\eta)A = \mathfrak{a}$ and $G_{\eta} (S) = k
[X^{*}_{1}, \ldots X^{*}_{n}]$ the polynomial ring in $n$-variables.

If $d= \dim S$ then set $[B,\n,\B,\phi] = [S,\eta,\eta,\psi].$
Otherwise set $c = \dim S-\dim A = \dim G_n (S) - \dim
G_{\mathfrak{a}} (A).$
Let $u_1,\ldots u_c$ be as in Corollary \ref{redTOmaxDIM}.
Set $B = S/(\ub)$, $\n = \eta/(\ub)$ and $\B = \n$. Clearly $G_{\n} (B) = G_{\eta}
(S)/(\ub^*)$ is CI.
 The map $\psi$ induces $\phi : B \rightarrow A.$  It can
be easily checked that $[B,\n,\n,\phi]$ is a CI-approximation of $[A,\m,\mathfrak{a}]$.
Finally by our construction its clear that $\mu(\n) = \mu(\A)$.

If $\mu(\A) > d$  then we simply let $B' = S/(u_1^s,u_2^s,\ldots, u_c^s)$ for $s$ large.
\end{proof}
\begin{remark}
If the reader is willing to work with equi-characteristic rings  and $\A$ is $\m$-primary then he/she need not read section 5.
\end{remark}

\section{A classical filtration of the dual}\label{section-classical}
Let $M$ be  an $A$-module.
The following  filtration of the dual of $M$, i.e, $M^{\dd} = \Hom_A(M, A)$ is classical cf.  \cite[p.\ 12]{Ser}.
Set
\[
M_{n}^{\dd} = \{ f \in M^{\dd} \mid f(M) \subseteq \A^n \} \cong \Hom_A(M,\A^n).
\]
It is easily verified that   $\eF_M = \{ M_{n}^{\dd} \}_{n \in \Z}$ is an $\A$-stable filtration on $M^{\dd}$.
We call $\eF_M$ to be the \emph{dual-filtration} of $M$ \wrt  \ $\A$.
Set $\R = \R(\A) = \bigoplus_{\nZ}\A^nt^n$  and
$\R(M) = \R(\A,M) = \bigoplus_{\nZ}\A^nMt^n$. Let $f \in  M_{n}^{\dd}$.  We show that $f$
induces a\textit{ homogeneous} $\R$-\textit{linear map} $\hat{f}$ \textit{of degree} $n$ \textit{from} $\R(M)$ to $\R$; (see \ref{mapAUG}).
Theorem  \ref{main}  shows that $\Psi_M$ is a $\R$-linear isomorphism.
\begin{align*}
\Psi_M \colon \R(\eF,M^{\dd}) &\xar \Hs_\R(\R(M),\R ) \\
f &\mapsto \f
\end{align*}
It is easily seen,  see \ref{initialobs},  that $\Psi_M$ induces a natural map
$$\Phi_M \colon G(\eF,M^{\dd}) \xar \Hs_{\GA}(G_{\A}(M),\GA). $$
We prove $\Phi_M$ is injective.
In Corollary \ref{corGor} we give a sufficient condition for $\Phi_M$ to be an isomorphism.

The following result is well-known.
\begin{proposition}
\label{stablefilt}
Let $(A,\m)$ be local, $\A$ an ideal in $A$ and let $M$ an  $A$-module.
Then $\A \Hom_{A}(M, \A^n) = \Hom_A(M,\A^{n+1})$ for all $n \gg 0$. \qed
\end{proposition}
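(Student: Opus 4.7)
The plan is to realize the collection $\{M_n^{\dd}\}_{n \geq 0} = \{\Hom_A(M,\A^n)\}_{n \geq 0}$ as the graded components of a finitely generated module over the Rees ring $\R = \ra = \bigoplus_{n \geq 0}\A^n$, and then invoke the standard fact that a finitely generated graded module over a standard graded Noetherian ring is eventually generated in each successive degree by the degree one piece of the ring.

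First I would set $N = \bigoplus_{n \geq 0}\Hom_A(M,\A^n)$ and note that $N$ carries a natural grading as an $\R$-module via $(r\cdot f)(m) = r f(m)$ for $r \in \A^k$ and $f \in \Hom_A(M,\A^n)$; the inclusion $\A^n \hookrightarrow A$ being injective identifies each summand with the submodule $M_n^{\dd} \subseteq M^{\dd}$. Since $A$ is Noetherian, $\R = A[\A t]$ is a finitely generated $A$-algebra and hence Noetherian.

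Next, pick a finite presentation $A^r \xrightarrow{\phi} A^s \rt M \rt 0$, which exists because $M$ is finitely generated over the Noetherian ring $A$. Applying $\Hom_A(-,\A^n)$ yields the exact sequence
\[
0 \rt \Hom_A(M,\A^n) \rt (\A^n)^s \xrightarrow{\phi^{T}} (\A^n)^r,
\]
and assembling these over $n \geq 0$ gives an exact sequence of graded $\R$-modules
\[
0 \rt N \rt \R^s \xrightarrow{\phi^{T}} \R^r,
\]
where $\phi^T$ is applied uniformly in each degree. Since $\R^s$ is finitely generated and $\R$ is Noetherian, the submodule $N$ is finitely generated over $\R$.

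Finally, choose homogeneous generators of $N$ of degrees at most $n_0$. Because $\R$ is standard graded (i.e.\ $\R_{k+1} = \R_1 \R_k$ for $k \geq 0$), a routine argument shows that $\R_1 \cdot N_n = N_{n+1}$ for all $n \geq n_0$; unwinding the definitions, this is exactly the asserted equality $\A \Hom_A(M,\A^n) = \Hom_A(M,\A^{n+1})$ for $n \gg 0$. The only mild obstacle is verifying that the $\R$-module structure on $N$ is compatible with the presentation-induced inclusion into $\R^s$, but this is purely formal once one tracks how $\A^k$ acts on $\Hom_A(M,\A^n)$.
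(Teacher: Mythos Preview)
Your argument is correct. The paper itself does not supply a proof: the proposition is stated with a terminal \qed\ and the remark that the result is well-known, so there is nothing to compare against beyond noting that your Rees-module argument is exactly the standard justification one would give for this fact. The one point worth making explicit in your write-up is the verification that $N_{n+1}\subseteq \R_1 N_n$ once generators of $N$ lie in degrees $\le n_0$: for $x\in N_{n+1}$ write $x=\sum r_i g_i$ with $\deg g_i=d_i\le n_0$ and $r_i\in\R_{n+1-d_i}$; since $n+1-d_i\ge 1$ and $\R$ is standard graded, each $r_i\in\R_1\R_{n-d_i}$, whence $x\in\R_1 N_n$. You allude to this as ``routine,'' which is fair, but since this is the heart of the conclusion it deserves one line.
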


\s \label{mapAUG} Let $f \in M_{n}^{\dd}$.

\noindent\textbf{Claim:} $f$ induces a homogeneous $\R$-linear map $\hat{f}$ of degree $n$ from $\R(M)$ to $\R$.

Since $f(M) \subseteq \A^n$, for each $j \geq 0 $ we have $f(\A^jM) \subseteq \A^{n+j}$. Furthermore for $j < 0 $ as $\A^jM = M$ and notice that $f(M) \subset \A^n \subset \A^{n+j}$.
Thus
\begin{equation*}
f(\A^jM) \subseteq \A^{n+j} \quad \text{for each $j \in \Z$}. \tag{$*$}
\end{equation*}
  This enables us to define
\begin{align*}
\hat{f} \colon \R(M) &\xar \R \\
\sum_{j \in \Z} m_jt^j &\mapsto \sum_{j \in \Z} f(m_j)t^{n+j}.
\end{align*}

Next we prove that $\hat{f}$ is $\R$-linear.
Clearly $\hat{f}$ is $A$-linear. Notice
\begin{align*}
 \f\left( (x_jt^j)\bullet(m_it^i) \right) &= \f(x_jm_it^{i+j}) = f(x_jm_i)t^{i+j+n} = x_jt^jf(m_i)t^{i+n}  \\
(x_jt^j)\bullet \f(m_it^i) &= x_jt^jf(m_i)t^{i+n}.
\end{align*}
Thus $\f$ is $\R$-linear.

\s \label{main-1} We define a map
\begin{align*}
\Psi_M \colon \R(\eF,M^{\dd}) &\xar \Hs_\R(\R(M),\R ) \\
f &\mapsto \f
\end{align*}

\begin{theorem}\label{main}
$\Psi_M \colon \R(\eF,M^{\dd}) \xar  ^*\Hom_{\R}(\R(M), \R)$ is a $\R$-linear isomorphism.
\end{theorem}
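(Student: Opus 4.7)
The plan is to establish that $\Psi_M$ is $\R$-linear, injective, and surjective. Well-definedness of each $\hat{f}$ as a homogeneous degree-$n$ $\R$-linear map $\R(M) \to \R$ has already been verified in \ref{mapAUG}, so $\Psi_M$ is well-defined on the homogeneous piece $(\R(\eF,M^{\dd}))_n = M_n^{\dd}$ and hence on all of $\R(\eF,M^{\dd})$ by additivity.

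For $\R$-linearity: if $g = bt^i \in \R_i$ with $b \in \A^i$ and $f \in M_n^{\dd}$, then the action on $\R(\eF,M^{\dd})$ sends $f$ to $bf \in M_{n+i}^{\dd}$ (note $bf(M) \subseteq b\A^n \subseteq \A^{n+i}$), and a direct calculation using the formula $\hat{f}(\sum m_j t^j) = \sum f(m_j) t^{n+j}$ gives $\widehat{bf} = (bt^i) \cdot \hat{f}$. For injectivity, if $\hat{f} = 0$ then in particular $\hat{f}(m) = f(m) t^n = 0$ for every $m \in M \cong \R(M)_0$; since $t^n$ is a unit in $A[t,t^{-1}]$, this forces $f = 0$.

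The main content is surjectivity. Given a homogeneous $\R$-linear map $\varphi \colon \R(M) \to \R$ of degree $n$, its restriction to $\R(M)_0 = M$ lands in $\R_n = \A^n t^n$, so for each $m \in M$ there is a unique $f(m) \in \A^n$ with $\varphi(m) = f(m)t^n$. Since $\varphi$ is $A$-linear on $\R(M)_0$, the map $f \colon M \to \A^n$ is $A$-linear, giving $f \in \Hom_A(M,\A^n) = M_n^{\dd}$. To conclude $\varphi = \hat{f}$, I would use that $\R(M)$ is generated as an $\R$-module by its degree-zero piece $M$: for $j \geq 0$, any element of $\R(M)_j = \A^j M t^j$ is a finite sum $\sum_\alpha (a_\alpha t^j)\cdot m_\alpha$ with $a_\alpha \in \A^j$ and $m_\alpha \in M$; for $j < 0$, any element of $\R(M)_j = Mt^j$ equals $(t^{-1})^{-j} \cdot m$, where $t^{-1} \in \R_{-1}$ makes sense because $\A^{-1} = A$ in the extended Rees algebra. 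Since $\varphi$ and $\hat{f}$ are both $\R$-linear and agree on the generating set $M$, they coincide.

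There is no real obstacle here: the argument is a direct manipulation with the grading and the observation that $\R(M)$ is $\R$-generated in degree zero. The only point needing care is keeping track of the extended (two-sided) grading, in particular the use of $t^{-1}$ to reach negative degrees, which is why one works with the extended Rees algebra rather than the ordinary one.
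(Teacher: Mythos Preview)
Your proof is correct and follows essentially the same approach as the paper: both establish $\R$-linearity and injectivity by direct inspection, and for surjectivity both restrict a given homogeneous $\varphi$ to $\R(M)_0 = M$ to produce $f \in M_n^{\dd}$ and then use that $\R(M)$ is generated over $\R$ in degree zero (the paper does this via an explicit case split $j=0$, $j<0$, $j>0$, which is exactly your argument written out). The only stylistic difference is that you state the generation-in-degree-zero principle once, whereas the paper verifies the three cases by hand.
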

\begin{proof}
If $a \in \A^j$ and $f \in M_{i}^{\dd}$ then
$af \in M^{\dd}_{i+j}$. Check  that $(at^i)\bullet \hat{f} = \hat{af}$.
Thus $\Psi_M$ is $\R$-linear.
Clearly  if $\hat{f} = 0$ then $f = 0$. Thus $\Psi_M$ is injective.

We show $\Psi_M$ is surjective. Let $g \in \Hs_\R(\R(M),\R )_n$. We write $g$ as
\[
g\left(\sum_{j \in \Z}m_jt^j\right)  = \sum_{j \in \Z}g_j(m_j)t^{n+j}.
\]
For each $j \in \Z$ the map $g_j \colon \A^jM \rt \A^{n+j}$ is $A$-linear.
Define $f = i\circ g_0$ where $i \colon \A^n \rt A$ is the inclusion map. Clearly $f \in M_{n}^{\dd}$.

\textit{Claim:} $\hat{f} = g$.

Let $m_jt^j \in \A^jMt^j$.

Case 1. $j = 0$.

Set $m = m_0$. Notice
$$g(m t^0) = g_0(m)t^n = \hat{f}(m t^0).$$
The last equality above holds since $\hat{f}$ is $\R$-linear.

\noindent\textit{In the next two cases we use Case 1 and the fact that  $\hat{f}$ is $\R$-linear.}

Case 2. $j< 0$.

Notice $m_jt^j =  t^j \bullet m_jt^{0}$. So
$$g(m_jt^j) = t^j\bullet g(m_jt^0) =  t^j\bullet \hat{f}(m_jt^0) = \hat{f}(m_jt^j).$$

Case 3. $j >0$.

Set $m_j = \sum_{l = 1}^{s}u_{jl}n_l$ where $u_{jl} \in \A^j$ and $n_l \in M$.
Fix $l$.
Notice
$$ u_{jl}n_lt^j = u_{jl}t^j\bullet n_lt^0.$$
Therefore for $l = 1,\ldots, s$,
$$g(u_{jl}n_lt^j) = u_{jl}t^j\bullet g(n_l t^0) = u_{jl}t^j \bullet \hat{f}(n_lt^0) = \hat{f}(u_{jl}n_lt^j).$$
 Note the last equality above is since $\hat{f}$ is $\R$-linear. So we have
$$ g(m_jt^j) = g\left(\sum_{l = 1}^{s}u_{jl}t^jn_lt^0\right)
           = \sum_{l =1}^{s}g(u_{jl}t^jn_lt^0)
           = \sum_{l = 1}^{s}\hat{f}(u_{jl}t^jn_lt^0)
           = \hat{f}(m_jt^j). $$
  Again   the last equality above holds since $\hat{f}$ is $\R$-linear.
\end{proof}

\begin{observation}\label{initialobs}
$f \in M_{n}^{\dd}$ induces $\hat{f} \colon\R(M) \rt \R$ which is homogeneous of degree  $n$. So $\hat{f}$ induces a map $\tilde{f}\colon G_{\A}(M) \rt \GA$ which is also
homogeneous of degree $n$. Clearly $\tilde{f} = 0$ \ff \
$f \in M_{n+1}^{\dd}$. So we have
\begin{align*}
\Phi_M \colon G(\eF,M^{\dd}) &\xar \Hs_{\GA}(G_{\A}(M),\GA) \\
f + M_{n+1}^{\dd} &\mapsto \tilde{f}. \\
\text{Clearly} \quad \Phi_M &= \Psi_M\otimes \frac{\R}{t^{-1}\R}.
\end{align*}
\end{observation}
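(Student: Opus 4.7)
The plan is to verify three things in order: that $\hat{f}$ descends from $\R(M)$ to a well-defined homogeneous $\GA$-linear map $\tilde{f}$ on $G_\A(M)$, that the assignment $f \mapsto \tilde{f}$ on $M_n^{\dd}$ has kernel exactly $M_{n+1}^{\dd}$, and finally that the resulting $\Phi_M$ is the reduction of $\Psi_M$ modulo $t^{-1}$.

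For the first step, since $\hat{f}$ is $\R$-linear of degree $n$ and $t^{-1} \in \R_{-1}$, we have $\hat{f}(t^{-1}\R(M)) \subseteq t^{-1}\R$ automatically. Using the canonical identifications $\R(M)/t^{-1}\R(M) = G_\A(M)$ and $\R/t^{-1}\R = \GA$, this gives an induced homogeneous $\GA$-linear map $\tilde{f}\colon G_\A(M) \to \GA$ of degree $n$.

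Second, I would analyze the kernel of $f \mapsto \tilde{f}$. The equation $\tilde{f} = 0$ is equivalent to $\hat{f}(\R(M)) \subseteq t^{-1}\R$; reading off the degree-$n$ component gives $f(M) \subseteq \A^{n+1}$, that is, $f \in M_{n+1}^{\dd}$. Conversely, if $f \in M_{n+1}^{\dd}$, then by the containment $(*)$ in \ref{mapAUG} applied to $f$ viewed as an element of $M_{n+1}^{\dd}$ we get $\hat{f}(\A^j M t^j) \subseteq \A^{n+j+1} t^{n+j} \subseteq t^{-1}\R$ for every $\iZ$, hence $\tilde{f} = 0$. This shows $\Phi_M$ is well-defined and injective on each homogeneous piece.

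Finally, to see that $\Phi_M = \Psi_M \otimes \R/t^{-1}\R$, I would simply note that the quotient map $\R(\eF, M^{\dd}) \to G(\eF, M^{\dd})$ is literally reduction modulo $t^{-1}$, and that the target $\Hs_\R(\R(M),\R)$ reduced modulo $t^{-1}\R$ maps naturally into $\Hs_{\GA}(G_\A(M), \GA)$ by the universal property of the quotients; the explicit formulas $f \mapsto \hat{f}$ and $f + M_{n+1}^{\dd} \mapsto \tilde{f}$ commute with these vertical reductions by construction. There is no genuine obstacle here — the whole argument is bookkeeping with the extended Rees algebra modulo $t^{-1}$, which is precisely why the author frames the statement as an observation rather than a theorem.
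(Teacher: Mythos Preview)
Your proposal is correct and follows the same line the paper takes; in fact the paper gives no separate proof here, treating each assertion as evident (hence the label ``Observation'' and the repeated use of ``Clearly''). What you have written is an accurate expansion of those implicit verifications, with the kernel computation and the reduction-mod-$t^{-1}$ description carried out exactly as intended.
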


\begin{corollary}\label{basicCor}
Set $G = \GA$.
There is an exact sequence
\[
0 \xar G(\eF,M^{\dd}) \xrightarrow{\Phi_M} \Hs_{G}(G_{\A}(M),G) \xar \Es^{1}_{\R}(\R(M),\R )(+1)
\]
\end{corollary}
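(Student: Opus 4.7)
The plan is to produce the claimed exact sequence by applying the functor $\Hs_\R(\R(M),-)$ to a natural short exact sequence on $\R$ that has $G = \GA$ as cokernel, and then to translate the resulting long exact sequence via Theorem~\ref{main} and Observation~\ref{initialobs}.

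\textbf{Step 1: The basic short exact sequence.} Because $t^{-1}$ is a nonzerodivisor on $\R$ and $\R/t^{-1}\R = G$, there is a short exact sequence of graded $\R$-modules
\[
0 \xar \R(1) \xrightarrow{\; t^{-1} \;} \R \xar G \xar 0,
\]
where the shift $\R(1)$ is used so that multiplication by $t^{-1}$ is homogeneous of degree $0$.

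\textbf{Step 2: Apply $\Hs_\R(\R(M),-)$.} Using the standard identity $\Hs_\R(-,P(m)) = \Hs_\R(-,P)(m)$, the long exact sequence becomes
\[
0 \xar \Hs_\R(\R(M),\R)(1) \xrightarrow{t^{-1}} \Hs_\R(\R(M),\R) \xar \Hs_\R(\R(M),G) \xar \Es^1_\R(\R(M),\R)(1) \xar \cdots .
\]
Truncating gives the exact sequence
\[
0 \xar \frac{\Hs_\R(\R(M),\R)}{t^{-1}\Hs_\R(\R(M),\R)} \xar \Hs_\R(\R(M),G) \xar \Es^1_\R(\R(M),\R)(1).
\]

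\textbf{Step 3: Identify the three terms.} By Theorem~\ref{main}, $\Psi_M$ gives an $\R$-linear isomorphism $\R(\eF, M^{\dd}) \cong \Hs_\R(\R(M),\R)$, so the leftmost term is identified with
\[
\R(\eF,M^{\dd})/t^{-1}\R(\eF,M^{\dd}) \;=\; G(\eF,M^{\dd}).
\]
For the middle term, observe that $t^{-1}$ annihilates $G$, so every $\R$-homomorphism $\R(M) \to G$ factors uniquely through the quotient $\R(M)/t^{-1}\R(M) = G_\A(M)$, and the resulting map is automatically $G$-linear. This gives a natural identification
\[
\Hs_\R(\R(M),G) \;\cong\; \Hs_G(G_\A(M),G).
\]
The rightmost term is already in the desired form.

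\textbf{Step 4: Identify the first map with $\Phi_M$.} By Observation~\ref{initialobs} we have $\Phi_M = \Psi_M \otimes_\R (\R/t^{-1}\R)$. On the other hand, the connecting arrow in the long exact sequence, after the identifications of Step~3, is precisely the reduction of $\Psi_M$ modulo $t^{-1}$ composed with the canonical map $\Hs_\R(\R(M),\R) \to \Hs_\R(\R(M),G)$ (which is exactly the factor-through-$G_\A(M)$ identification). Naturality of $\Psi_M$ shows these two descriptions agree, so the first map is $\Phi_M$.

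The only non-formal step is Step~4, where one has to check that the induced map and $\Phi_M$ really coincide; but this is a direct unwinding of the definitions already given in~\ref{mapAUG} and~\ref{initialobs}, so I do not expect any real obstacle. The rest is standard homological algebra applied to a well-chosen short exact sequence.
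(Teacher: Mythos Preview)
Your proposal is correct and follows essentially the same route as the paper: apply $\Hs_\R(\R(M),-)$ to the short exact sequence $0 \to \R(+1) \xrightarrow{t^{-1}} \R \to G \to 0$, then identify the resulting terms using Theorem~\ref{main} and Observation~\ref{initialobs}. Your Steps~3 and~4 spell out in more detail what the paper leaves implicit, but the argument is the same.
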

\begin{proof}
The map $ 0\rt \R(+1) \xrightarrow{t^{-1}} \R \rt G \rt 0$ induces the exact sequence
\begin{align*}
0&\xar \Hs_{\R}(\R(M),\R)(+1) \xrightarrow{t^{-1}} \Hs_{\R}(\R(M),\R) \xar \Hs_{\R}(\R(M),G)\\
\ &\xar \Es_{\R}^{1}(\R(M),\R)(+1)
\end{align*}
Note that $\Hs_{\R}(\R(M),G) \cong \Hs_{G}(G_{\A}(M),G)$. The result follows by using
Theorem \ref{main} and Observation \ref{initialobs}.
\end{proof}

An important consequence of  Corollary \ref{basicCor} is the following:
\begin{corollary}\label{corGor}
Let $(A,\m)$ be a Gorenstein local ring and let $\A$ be an  ideal such that $\GA$ is  Gorenstein. Let $M$ be a maximal \CM \ $A$-module with $G_{\A}(M)$-\CM. Then
$$ G(\eF,M^{\dd}) \cong \Hs_{\GA}(G_{\A}(M),\GA).$$
\end{corollary}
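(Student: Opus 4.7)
The strategy is to apply Corollary \ref{basicCor}: the exact sequence displayed there shows that $\Phi_M$ is always injective with cokernel sitting inside $\Es^1_{\R}(\R(M),\R)(+1)$, so the entire content of the statement reduces to proving
\[
\Es^1_{\R}(\R(M),\R) = 0.
\]
Since $\Es$ and ordinary $\Ext$ coincide as abelian groups for finitely generated graded modules (a graded free resolution is also a free resolution), it suffices to show $\Ext^1_{\R}(\R(M),\R) = 0$. I will deduce this from two claims: $\R$ is a Gorenstein ring, and $\R(M)$ is a maximal \CM \ $\R$-module; together these force $\Ext^i_{\R}(\R(M),\R) = 0$ for every $i \geq 1$, and the isomorphism follows.

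For the first claim, the element $t^{-1} \in \R$ is a nonzerodivisor, because $\R \subset A[t,t^{-1}]$ and $t^{-1}$ is a unit in the overring. Moreover $\R/t^{-1}\R \cong \GA$ is Gorenstein by hypothesis, so $\R_P$ is Gorenstein for every prime $P$ of $\R$ containing $t^{-1}$ (lifting Gorenstein through a regular element). For primes $P$ not containing $t^{-1}$, the localization $\R_P$ is a further localization of $\R[(t^{-1})^{-1}] = A[t,t^{-1}]$, which is Gorenstein since $A$ is. Hence $\R$ is Gorenstein at every prime.

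For the second claim, the same embedding $\R(M) \subset M[t,t^{-1}]$ shows that $t^{-1}$ is $\R(M)$-regular, and $\R(M)/t^{-1}\R(M) \cong G_{\A}(M)$ is \CM \ by hypothesis, of dimension $\dim M = \dim A$ (as $M$ is maximal \CM). Therefore $\depth_{\R}\R(M) \geq \dim A + 1 = \dim \R \geq \dim \R(M)$, forcing equality throughout, so $\R(M)$ is maximal \CM. Consequently $\Ext^i_{\R}(\R(M),\R) = 0$ for all $i \geq 1$, and in particular $\Es^1_{\R}(\R(M),\R) = 0$, yielding the desired isomorphism.

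The main technical point is the ``one prime at a time'' verification that $\R$ is Gorenstein, using the regular element $t^{-1}$ together with the Gorensteinness of the localization $A[t,t^{-1}]$; this is a standard but careful change-of-rings computation. The rest is bookkeeping on depth and dimension, exploiting that factoring out $t^{-1}$ from both $\R$ and $\R(M)$ recovers exactly the assumed Gorenstein and \CM \ hypotheses at the graded level.
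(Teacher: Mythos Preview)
Your proof is correct and follows essentially the same approach as the paper. The paper's own proof simply asserts that $\R$ is Gorenstein and $\R(M)$ is maximal \CM, then concludes $\Es^1_{\R}(\R(M),\R)=0$ and invokes Corollary~\ref{basicCor}; you have supplied the details behind those two assertions (the prime-by-prime check for Gorensteinness of $\R$ via $t^{-1}$, and the depth count for $\R(M)$).
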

\begin{proof}
Notice $\R$ is also a Gorenstein ring and $\R(M)$ is maximal \CM. It follows that
$\Es_{\R}^{1}(\R(M),\R) = 0$. The result follows from Corollary \ref{basicCor}.
\end{proof}

We now discuss dual filtration's and the Veronese functor. We show
\begin{proposition}
  \label{dual-Ver-prop}
  Let $(A,\m)$ be  local, $\A$ an ideal in $A$ and let $M$ be an $A$-module. Let $\eF$ be the dual filtration of $M$ \wrt  \ $\A$. Then for all $l\geq 1$;  $\eF^{<l>}$ is the dual filtration of $M$ \wrt \ $\A^l$.
\end{proposition}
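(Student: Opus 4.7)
The plan is to simply unwind the definitions and observe that the two filtrations agree term by term. Write $\eF = \{M_n^\dd\}_{n \in \Z}$ for the dual filtration of $M$ with respect to $\A$, so by definition
\[
 M_n^\dd = \{ f \in M^\dd \mid f(M) \subseteq \A^n \} \cong \Hom_A(M, \A^n),
\]
and write $\eG = \{N_n\}_{n \in \Z}$ for the dual filtration of $M$ with respect to $\A^l$, so
\[
 N_n = \{ f \in M^\dd \mid f(M) \subseteq (\A^l)^n \} \cong \Hom_A(M, \A^{ln}).
\]
By definition of the Veronese, $\eF^{<l>}_n = M_{nl}^\dd = \Hom_A(M, \A^{nl})$. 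Since $(\A^l)^n = \A^{nl}$ as ideals, the two subsets of $M^\dd$ coincide: $\eF^{<l>}_n = N_n$ for every $n \in \Z$.

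What one must also verify to call $\eF^{<l>}$ the dual filtration is that this collection is genuinely an $\A^l$-stable filtration on $M^\dd$, but this follows at once from the general remark recorded just before \ref{AtoA'}: for any $\A$-stable filtration $\eF$, the Veronese $\eF^{<l>}$ is automatically an $\A^l$-stable filtration, and $\eF$ itself is $\A$-stable by Proposition \ref{stablefilt}. There is essentially no obstacle here; the statement is a bookkeeping lemma whose content is the tautology $(\A^l)^n = \A^{ln}$, but it is worth recording because it is the property that lets one pass freely between dual filtrations and their Veronese subalgebras when manipulating the isomorphism $\Psi_M$ of Theorem \ref{main}.
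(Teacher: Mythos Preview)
Your proof is correct and follows exactly the same approach as the paper's own proof, which is the one-line observation that $\eF_{nl} = \Hom_A(M,\A^{nl}) = \eG_n$ since $(\A^l)^n = \A^{nl}$. You have merely unpacked the definitions more explicitly and added the (harmless) remark on $\A^l$-stability, which the paper leaves implicit.
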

\begin{proof}
Fix $l \geq 1$. Let $\eG$ be the dual filtration of $M$ \wrt  \ $\A^l$.
  We note that $\eF_{nl} = \Hom_A(M, \A^{nl}) = \eG_{n}$. The result follows.
\end{proof}

\section{CI approximation: The general case} \label{sectionGAPP}
In this section we prove  our general result regarding  complete intersection (CI)
approximation; see  \ref{equiTT}. In  \ref{field} we showed that every $\m$-primary ideal $\A$ in an equicharacteristic
local ring $A$ has a
CI-approximation. Even if we are interested only in the case of $\m$-primary ideals; we have to take some care for
dealing with the case of local rings with mixed characteristics. \emph{The essential point is to show existence of
homogeneous regular sequences in certain graded ideals.}

This section is divided into two subsections. In the first subsection $R = \bigoplus_{n\geq 0}R_n$ be  a standard graded algebra
over a local ring $(R_0,\m_0)$. When $R$ is \CM \ and $M =\bigoplus_{n\geq 0}M_n$ is a \fg graded $R$-module generated by elements in $M_0$,
we give conditions to ensure a  homogeneous regular sequence $\xi_1,\ldots,\xi_c \in R_+ \cap\ann_R M$
where $c= \dim R - \dim M$ (see Theorem \ref{hann}).
In the second subsection we prove Theorem \ref{equiTT}.

\textbf{Homogeneous regular sequence }

Let  $R = \bigoplus_{n\geq 0}R_n$ be  a standard  algebra
over a local ring $(R_0,\m_0)$ and let $M =\bigoplus_{n\geq 0}M_n$ be a \fg graded $R$-module.
 In general a graded module $M$
need not have homogeneous  regular sequence $\xi_1,\ldots,\xi_c \in R_+ \cap\ann_R M$ where $c = \grade M$ (see \ref{eqEx}).
 When $R$ is \CM \ and $M =\bigoplus_{n\geq 0}M_n$ is a \fg \ graded $R$-module generated by elements in $M_0$,
we give conditions to ensure a homogeneous regular sequence $\xi_1,\ldots,\xi_c \in R_+ \cap\ann_R M$
where $c= \dim R - \dim M$.

We adapt an example from \cite[p.\ 34]{BH} to show that a homogeneous  regular sequence of length
$\grade M$
in $R_+ \cap\ann_R M$ need not exist.

\begin{example}\label{eqEx}
 Let $A = k[[X]]$ and $T = A[Y]$. Set $R = T/(XY)$. Notice that $R_0 = A$. Set $\M = (X,Y)R$ the unique graded maximal ideal of $R$ and let $M = R/\M = k$. Clearly $\grade M = \grade(\M , R) = 1$. It can be easily checked that
every homogeneous element in $R_+$ is a zero-divisor.
\end{example}

The following result regarding existence of homogeneous regular sequence in $(\ann M) \cap R_+$ (under certain conditions) is crucial in our proof of Theorem \ref{equiTT}.
\begin{theorem}
\label{hann}
Let $(R_0,\m_0)$ be a  \ local ring and let $R = \bigoplus_{n\geq 0}R_n$ be a
standard graded $R_0$-algebra and let $M = \bigoplus_{n\geq 0}M_n$ be a finitely generated
$R$-module. Assume $R$ has a homogeneous s.o.p. Set $c = \dim R - \dim M$.
Assume
\begin{enumerate}[ \rm (1)]
\item
$R_0$ is \CM \ and $R$ is \CM.
\item
$M$ is generated as an $R$-module by some elements in $M_0$.
\item
There exists
$y_1,\ldots,y_s \in R_0$  a system of parameters for both $M_0$ and $R_0$ and a  part of a homogeneous system of parameters for both $M$ and $R$.
\end{enumerate}
Then there exists a homogeneous regular sequence
  $\xi_1,\ldots,\xi_c \in R_+$ such that $\xi_i \in \ann_R M$ for each $i$.
\end{theorem}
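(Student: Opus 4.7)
I will prove that the graded ideal $J := R_+ \cap \ann_R M$ has height exactly $c$, and then extract a homogeneous regular sequence of length $c$ inside $J$ using that $R$ is Cohen--Macaulay and $^*$-local (with graded maximal ideal $\M = \m_0 + R_+$).

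\emph{Step 1 (heights of the two factors).} Because $R$ is Cohen--Macaulay, for every proper ideal $K \subseteq R$ one has $\htt(K) = \dim R - \dim R/K$. Since $\dim R/\ann_R M = \dim M$ for finitely generated $M$, and $R/R_+ = R_0$, this yields
\[
\htt(\ann_R M) = \dim R - \dim M = c \qquad \text{and} \qquad \htt(R_+) = \dim R - \dim R_0.
\]

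\emph{Step 2 (height of the intersection).} From $V(R_+ \cap \ann_R M) = V(R_+) \cup V(\ann_R M)$ one sees that every minimal prime over $R_+ \cap \ann_R M$ is minimal over $R_+$ or over $\ann_R M$; combined with $R_+ \cap \ann_R M \subseteq R_+, \ann_R M$, this gives
\[
\htt(R_+ \cap \ann_R M) = \min\bigl(\htt(R_+),\, \htt(\ann_R M)\bigr) = \min(\dim R - \dim R_0,\, c).
\]
To get equality with $c$ I need $\dim M \geq \dim R_0$. This is exactly where hypothesis~(3) is essential: since $y_1, \dots, y_s$ is a system of parameters for both $R_0$ and $M_0$, one has $s = \dim R_0 = \dim M_0$, and the $y_i$'s being part of a homogeneous s.o.p.\ for $M$ forces $\dim M \geq s = \dim R_0$. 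Hence $\htt(R_+ \cap \ann_R M) = c$.

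\emph{Step 3 (homogeneous regular sequence of length $c$).} It remains to show the standard fact: in a CM $^*$-local graded ring, a graded ideal of height $c$ contains a homogeneous regular sequence of length $c$. The associated primes of $R$ equal its minimal primes (as $R$ is CM), and these are finitely many homogeneous primes. Induct on $c$: for $c=0$ nothing is required; for $c \geq 1$, no minimal prime of $R$ contains $J$ (since $\htt J = c \geq 1$), so homogeneous prime avoidance produces a homogeneous $\xi_1 \in J$ avoiding all of them. This $\xi_1$ is $R$-regular, $R/(\xi_1)$ remains CM and $^*$-local graded, and the image of $J$ there is a graded ideal of height $c-1$; the inductive hypothesis completes the construction.

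\textbf{Main obstacle.} The technical heart is the height computation of Step 2, specifically the inequality $\dim M \geq \dim R_0$; without hypothesis~(3) the intersection $R_+ \cap \ann_R M$ could have height strictly less than $c$ and there would be no room to find $c$ regular elements inside it. Once this height bound is in hand, the rest is routine $^*$-local Cohen--Macaulay machinery.
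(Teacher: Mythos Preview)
Your proof is correct and takes a genuinely different route from the paper's. The paper argues by induction on $s = \dim R_0$: when $s = 0$ the ring $R_0$ is Artinian, whence $\grade(\ann_R M \cap R_+, R) = \grade(\ann_R M, R) = c$, and \cite[1.5.11]{BH} finishes; for the inductive step one passes to $S = R/y_1R$ and $N = M/y_1M$, obtains a homogeneous $S$-regular sequence $\bar\xi_1, \ldots, \bar\xi_c \in S_+ \cap \ann_S N$ by induction, and then lifts each $\xi_i$ back to $R$ via the determinant trick, producing $\Delta(\xi_i) \in R_+ \cap \ann_R M$ with $\Delta(\xi_i) \equiv \xi_i^{n} \pmod{y_1}$; here hypothesis~(2) (that $M$ is generated in degree zero) is exactly what makes the determinant trick output a \emph{homogeneous} element. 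Your approach instead computes $\htt(R_+ \cap \ann_R M) = \min(\htt R_+,\, \htt \ann_R M) = c$ directly, the key input being $\dim M \geq \dim R_0$ extracted from hypothesis~(3), and then pulls out the regular sequence by graded prime avoidance in the CM $^*$-local ring $R$ (this is again \cite[1.5.11]{BH}, now applied in full generality, using that $J \subseteq R_+$ is generated in positive degrees). Your argument is shorter and in fact never uses hypothesis~(2) nor the Cohen--Macaulayness of $R_0$; the paper's inductive construction, by contrast, is more explicit and parallels the determinant-trick philosophy of Lemma~\ref{annihilator}, which is the form actually consumed later in the CI-approximation arguments.
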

\begin{proof}
We prove this by induction on $s = \dim R_0$.
When $s = 0$,  $R_0$ is Artinian. So $\grade(\ann M,R) = \grade(\ann M \cap R_+, R)$. As $R$ is CM \ we have $\grade(\ann M, R) = c$.
 The result  follows from \cite[1.5.11]{BH}.

We  prove the assertion for $s = r +1$
assuming it to be valid for $s=r$. Set $S = R/y_1R = \bigoplus_{n\geq 0} S_n$ and $N = M/y_1M$. Note that $N$ is also generated in degree $0$.  Let $\ov{y_i} = y_i \mod (y_1)$ with $2 \leq i \leq s$. Then $\ov{y_2} \ldots \ov{y_s}$
is a system of parameters for both $S_0$ and $N_0 = M_0/y_1M$. Notice $c = \dim R - \dim M = \dim S - \dim N$. Also note that $S$ has h.s.o.p. By induction hypothesis
there exists a $S$-regular sequence
  $\ov{\xi_1},\ldots,\ov{\xi_c} \in S_+$ such that $\ov{\xi_i} \in \ann_S N$ for each $i$.

$R$ is CM. So $y_1$ is  $R$-regular.
Therefore $y_1,\xi_1,\xi_2\ldots,\xi_c$ is an $R$-regular sequence.
Set $\q = y_1R_+$. Let $M$ be generated as an $R$-module by $u_1,\ldots,u_m \in M_0$.
Fix $i \in \{1,\dots,c \}$ and set $\xi = \xi_i$. By construction $\xi u_j \in y_1 M$ for each
 $j$. However $\xi \in R_+$ and $y_1 \in R_0$ . So $\xi u_j \in \q M$ for each
 $j$. By the determinant trick (observing that each $u_j$ has  degree zero)
 there exists a \textit{ homogeneous} polynomial
\[
\Delta(\xi) = \xi^{n} + a_{1}\xi^{n-1} + \ldots +a_{n-1}\xi + a_n \in \ann_R M \quad \text{and} \ a_i \in \q^i \ \text{for} \ i =1,\dots,n.
\]
Clearly $\Delta(\xi) \in R_+$.
Notice that $\Delta(\xi) = \xi^{n} \mod (y_1)$. So
$y_1,\Delta(\xi_1),\ldots,\Delta(\xi_c)$ is a regular sequence, \cite[16.1]{Ma}.
Therefore $ \Delta(\xi_1),\Delta(\xi_2)\ldots,\Delta(\xi_c)$
is an $R$-regular sequence. This proves the assertion when $s =r+1$.
\end{proof}

\textbf{CI approximation}

In this subsection we prove our general result regarding  CI-approximation.

\begin{theorem}\label{equiTT}
Let $(A,\m)$ be a complete
 local ring and let $\A$ be a proper  ideal in $A$ with $\dim A/\A + \spr(\A)
 = \dim A$. Then
$A$ has a CI-approximation \wrt \ $\A$. Furthermore if $\A = (x_1,\ldots,x_m)$ then we may
choose
a CI-approximation $[B,\n,\B,\varphi]$ of $[A,\m,\A]$ such that  $\B = (v_1,\ldots,v_m)$ with $\varphi(v_i) = x_i$ for  $i = 1,\ldots, m$.
\end{theorem}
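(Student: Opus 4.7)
The plan is to realize $A$ as a finite module over a large complete regular local ring $T$ in which $\A$ is the image of an ideal $\tf = (X_1,\ldots,X_m)T$ with $X_j \mapsto x_j$, and then to kill a carefully chosen regular sequence of $T$ to produce $B$. Fix $s = \dim A/\A$ and choose $y_1,\ldots,y_s \in \m$ whose images in $A/\A$ form a system of parameters. Using Cohen's structure theorem I construct a complete regular local ring $T$ with residue field $A/\m$, together with a local homomorphism $\psi_T \colon T \rt A$ sending distinguished indeterminates $X_j \mapsto x_j$ and $Y_i \mapsto y_i$: in the equicharacteristic case this is $T = k[[Y_1,\ldots,Y_s,X_1,\ldots,X_m]]$; in mixed characteristic one uses a coefficient DVR $V$ with uniformizer $\pi$, either incorporating $\pi$ as one of the $Y$'s (when $\pi \notin \A$) or absorbing $\pi$ into the $X$'s (when $\pi \in \A$). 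In either setting the image of $\m_T$ generates an $\m$-primary ideal in $A$, so $\psi_T$ is finite by \cite[8.4]{Ma}; moreover, the induced local map $T/\tf \rt A/\A$ between complete local rings is a finite injection (a Noether normalization), so $\dim T/\tf = s$.

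Next I apply Theorem \ref{hann} to $R = G_\tf(T)$ and $M = \GA$ viewed as a graded $R$-module via $\psi_T$. Because $\tf$ is generated by part of a regular system of parameters in the regular ring $T$, we have $R = (T/\tf)[\xi_1,\ldots,\xi_m]$, a polynomial ring over the regular local ring $T/\tf$; in particular $R$ is Cohen-Macaulay and has a homogeneous system of parameters. The module $M$ is generated in degree $0$, and the images of $y_1,\ldots,y_s$ in $R_0 = T/\tf$ form a system of parameters of both $R_0$ and $M_0 = A/\A$, extending to homogeneous systems of parameters of $R$ and $M$ (for the latter using the hypothesis $d = s + \spr(\A)$ together with \cite[2.6]{HUO}). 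Theorem \ref{hann} then produces a homogeneous regular sequence $\zeta_1,\ldots,\zeta_c \in R_+$ with $\zeta_i \in \ann_R M$, where $c = \dim R - \dim M = \dim T - \dim A$. Applying Lemma \ref{annihilator} to the ring $T$, the $T$-module $A$ and the ideal $\tf$ (noting $G_\tf(A) = \GA$ since $\tf A = \A$), I lift the $\zeta_i$ to a $T$-regular sequence $u_1,\ldots,u_c \in \tf$ with $u_i \in \ann_T A$, $u_i^{*} = \zeta_i^{n_i}$, and such that $u_1^{*},\ldots,u_c^{*}$ is an $R$-regular sequence.

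Set $B = T/(u_1,\ldots,u_c)$ with maximal ideal $\n$, let $v_j$ be the image of $X_j$, and put $\B = (v_1,\ldots,v_m)B = \tf B$; let $\varphi \colon B \rt A$ be induced by $\psi_T$. Since $u_i \in \ann_T A$, the map $\varphi$ is well defined, local and finite, with $\varphi(\B)A = \A$ and $\varphi(v_j) = x_j$. As $(u_1,\ldots,u_c)$ is a regular sequence in the regular ring $T$, the ring $B$ is a complete intersection of dimension $\dim T - c = \dim A$. The $R$-regularity of the $u_i^{*}$ together with the Valabrega-Valla criterion yields $G_\B(B) \cong R/(u_1^{*},\ldots,u_c^{*})$, which is itself a complete intersection. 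Hence $[B,\n,\B,\varphi]$ is the required CI-approximation. The principal technical obstacle is the first step: in the mixed characteristic setting, arranging $T$ so that $\dim T/\tf = \dim A/\A$ and $T/\tf \rt A/\A$ is injective, thereby securing hypothesis~(3) of Theorem \ref{hann}; the equicharacteristic case is essentially Theorem \ref{field}.
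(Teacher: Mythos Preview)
Your overall strategy---realize $A$ as a finite module over a regular (or complete-intersection) ring, then apply Theorem~\ref{hann} and Lemma~\ref{annihilator} to find a regular sequence in $\tf\cap\ann A$ whose initial forms are also regular, and pass to the quotient---is exactly the paper's approach, and your treatment of the equicharacteristic case coincides with the paper's Case~1.

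The gap is in the mixed characteristic step. Your proposed dichotomy (treat $\pi$ as one of the $Y$'s when $\eta(\pi)\notin\A$, absorb it among the $X$'s when $\eta(\pi)\in\A$) does not cover all cases. For instance, take $A=\mathbb{Z}_p[[t_1,t_2]]$ and $\A=(p^2,t_1)$: here $\dim A/\A+\spr(\A)=1+2=3=\dim A$, and $\eta(\pi)=p\notin\A$, yet $p$ is nilpotent modulo $\A$ and so cannot be part of any system of parameters of $A/\A$. Thus it cannot serve as a $Y$-variable, and it certainly cannot be ``absorbed into the $X$'s'' without either falsifying $\tf=(X_1,\ldots,X_m)$ in the regular ring or adding an extra generator to $\B$ that does not map to any prescribed $x_j$. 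In such examples you cannot arrange $\dim T/\tf=\dim A/\A$, so hypothesis~(3) of Theorem~\ref{hann} fails and the argument stalls.

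The paper handles this differently. It takes $S=D[[Y_1,\ldots,Y_s,X_1,\ldots,X_m]]$ without any adjustment, so that $T=S/(\mathbf{X})$ has dimension $s+1$, one more than $\dim A/\A$. It then chooses $f\in S$ with $\bar f\in\ann_T(A/\A)$ such that $\bar f,\bar Y_1,\ldots,\bar Y_s$ is a system of parameters of $T$, and uses the determinant trick to produce $\Delta\in\ann_S A$ with $\Delta\equiv f^m\pmod{(\mathbf X)}$. Passing first to the complete intersection $U=S/(\Delta)$ gives $G_{\mathbf X}(U)\cong (T/(f^m))[X_1^*,\ldots,X_m^*]$, now with degree-zero part of dimension $s=\dim A/\A$; from here Theorem~\ref{hann} and Lemma~\ref{annihilator} apply exactly as in the equicharacteristic case. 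This extra ``$\Delta$-step'' is precisely the missing ingredient in your plan.
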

\begin{proof}
Notice $\GA$ has a homogeneous s.o.p (see proof of Proposition 2.6 in \cite{HUO}).
 Let $ y_1,\ldots,y_s \in A$ such that $ \ov{y_1},\ldots,\ov{y_s}$ (their images in $A/\A$ ) is an
\emph{s.o.p} of $A/\A$.

\textbf{Case 1.} \emph{A contains a field:}
 As $A$ is a complete,
$A$ contains  $k = A/\m$.
  Set $S = k[[ Y_1,\ldots, Y_s, X_1,\ldots,X_n ]]$. Define
$\phi \colon S \rt A$ which maps $Y_i$ to $y_i$ and $X_j$ to $x_j$ for $i= 1,\ldots, s$
and $j = 1,\ldots, n$.
 Set $\q = (Y_1,\ldots, Y_s)$.
Notice
\[
\frac{A}{(\q, X_1,\ldots,X_n)A} = \frac{A}{(\A, y_1,\ldots,y_s)} \quad \text{has finite length.}
\]
As $S$ is complete we get that $A$ is a finitely generated $S$-module, cf. \cite[8.4]{Ma}. Since
$\psi((\bX))A = \A$,  we get  $\GA = G_{(\bX)}(A)$ is a finitely generated
$G_{(\bX)}(S)$-module.

Notice $G_{(\bX)}(S) \cong
S/(\bX)[X_{1}^{*},\ldots, X_{n}^{*}]$ and $S/(\bX) \cong k[| Y_1,\ldots, Y_s|]$.
Furthermore
 $G_{(\bX)}(A) = \GA$ is generated in degree zero as a $G_{(\bX)}(S)$-module.
Set $c =  \dim S - \dim A  = \dim G_{(\bX)}(S)  - \dim \GA $.  Notice  that $ Y_1,\ldots, Y_s$ is a s.o.p of  $G_{(\bX)}(S)_0$ and $\GA_0$.
We apply Theorem \ref{hann} to get $\xi_{1},\ldots,\xi_{c} \in (\bX)  $ such that
 $\xi_{1}^{*}, \ldots, \xi_{c}^{*} \in G_{(\bX)}(S)_+  $ is a \emph{regular sequence}   in  $G_{(\bX)}(S)$ and
$\xi_{i}^{*} \in \ann_{G_{(\bX)}(S)} \GA$ for all $i$. We now apply  Lemma
\ref{annihilator} to get $u_1,\ldots, u_c  \in (\bX)$ an $S$-regular sequence in
$\ann_S (A)$ such that $u_{1}^{*},\ldots, u_{c}^{*}$ is a $G_{(\bX)}(S)$-regular
 sequence in $\ann_{G_{(\bX)}(S)} \GA $.  Set $\mathbf{u} = (u_1,\ldots, u_c) $,   $B = S/\mathbf{u} $, $\B = (\bX +\mathbf{u})/\mathbf{u}$.  Let $\n$ be the maximal ideal in $B$ and let $\ov{\phi} \colon B \rt A$
be the map induced by $\phi \colon  S \rt A$. Then it is clear that
$(B,\n,\B, \ov{\phi})$ is a CI approximation of $A$ \wrt \ $\A$. Set $v_i = \ov{X_i}$. Then $\B = (v_1,\ldots,v_m)$ with $\varphi(v_i) = x_i$ for  $i = 1,\ldots, m$.

\textbf{Case 2.} \textit{$A$ does not contain a field:} There exists a DVR, $(D,\pi)$, and a local
ring homomorphism $\eta \colon D \rt A$ such that $\eta$ induces an isomorphism
 $D/(\pi) \rt A/\m$.  Set $S = D[| Y_1,\ldots, Y_s, X_1,\ldots,X_n|]$. Define
$\phi \colon S \rt A$, with  $\phi(d) =  \eta(d)$ for each $d \in D$ and maps  $Y_i$ to $y_i$ and $X_j$ to $x_j$ for $i= 1,\ldots, s$
and $j = 1,\ldots, n$.
 Set $\q = (Y_1,\ldots, Y_s)$.
Notice
\[
\frac{A}{(\q, X_1,\ldots,X_n)A} = \frac{A}{(\A, y_1,\ldots,y_s)} \quad \text{has finite length.}
\]
As $S$ is complete we get that $A$ is a finitely generated $S$-module.

Set $T = S/(\bX) = G_{\bX}(S)_0$. Notice $\dim T$ is one more than that of $A/\A = \GA_0$.
To deal with this situation we  use an argument from \cite{PuEuler}, which we repeat for the convenience of the reader. Let $\mathfrak{t}$ be the maximal ideal of $T$. Set $\ov{Y_i} = $ image of $Y_i$ in $T$. Set $V= A/\A$.
Note that
\[
\mathfrak{t} = \sqrt{\ann_T(V/\ov{\bY} V)} = \sqrt{\ann_T(V) + (\ov{\bY})}.
\]
So there exists $\ov{f} \in \ann_T(V) \setminus (\ov{\bY})$ such
that $\ov{f},\ov{Y_1},\ldots,\ov{Y_s}$ is an s.o.p. of $T$. Since $T$ is CM; $f,\ov{Y_1},\ldots,\ov{Y_s}$ is a $T$-regular
sequence.
So $X_1,\ldots,X_n,f,Y_1,\ldots,Y_s$ is a $S$-regular
sequence. Since $\ov{f} \in \ann_T(A/\A)$  we get $f A \subseteq \A A = (\bX)A$. Using the
determinant trick
 there exists
\[
\Delta = f^m + \alpha_{1}f^{m-1} + \ldots + \alpha_{m-1}f + \alpha_{m} \in \ann_S A \quad \& \alpha_i \in (\bX)^i; \ 1\leq i \leq m.
\]
 Notice that $\Delta = f^m \ \text{mod}( \bX)$. Thus
 $X_1,\ldots,X_n, \Delta,Y_1,\ldots,Y_s$ is a $S$-regular
sequence. So $\Delta$ is $S$-regular. Set $U = S/( \Delta)$ and $\C = ((\bX) + ( \Delta))/( \Delta)$.
 Notice that $A$ is a finitely generated $U$-module and $G_{\C}(A) = \GA$. Furthermore
$G_{\C}(U) \cong T/(f^m)[X_{1}^{*},\ldots, X_{n}^{*}]$ is  CI.

 Note that as a
 $G_{\C}(U)$ module $\GA $ is generated in degree zero. Furthermore $\dim G_{\C}(U)_0 = \dim \GA_0$. Set $c =  \dim U - \dim A  = \dim G_{\C}(U)  - \dim \GA $.
The subsequent argument is similar to  that of Case 1.
\end{proof}
\begin{remark}\label{high-red} It follows from the proof of Theorem \ref{equiTT} that if $\A$ is $\m$-primary
with $\mu(\A) \geq d + 1$ then
there exists CI-approximations with arbitrary high reduction numbers. To see this note that
there exist a complete intersection $R$ (with $G(R)$ a complete intersection) and a   map $R \rt A$ and elements $f_1,\cdots, f_m \in  \ann_R A$ such that $f_1^*,\cdots, f_m^* \in \ann_{G(R)} \GA$ and $f_1^*,\cdots, f_m^*$ is $G(R)$-regular. Furthermore $B = R/(f_1,\cdots, f_m)$. Also note that $\deg f_i^* > 0$ for all $i$. We may simply take $B'$ to be
$R/(f_1^l,\cdots, f_m^l)$ for large $l$.

\end{remark}

\section{On $L^I(M)$}\label{Lprop}
In \cite{Pu5} we introduced a new technique to investigate problems relating to associated graded modules.
In this section we collect all the relevant results which we proved in \cite{Pu5}. Throughout this section
$(A,\m)$ is a Noetherian local ring with infinite residue field, $M$ is a \emph{\CM }\ module of dimension
$r \geq 1$ and $I$ is an $\m$-primary ideal.

\s \label{mod-struc} Set $\Sc = A[It]$;  the Rees Algebra of $I$. Set
$L^{I}(M) = \bigoplus_{n\geq 0}M/I^{n+1}M$. We note that $L^I(M)(-1) =  M[t]/\R(I, M)$.  So  $L^{I}(M)$ is a $\Sc$-module. Note that $L^I(M)$ is \emph{not} a finitely generated $\Sc$-module.

 \s Set $\M = \m\oplus \Sc_+$. Let $H^{i}(-) = H^{i}_{\M}$ denote the $i^{th}$-local cohomology functor \wrt \ $\M$. Recall a graded $\Sc$-module $L$ is said to be
*-Artinian if
every descending chain of graded submodules of $L$ terminates. For example if $E$ is a finitely generated $\Sc$-module then $H^{i}(E)$ is *-Artinian for all
$i \geq 0$.

\begin{definition}(\cite[sect. 6]{HeL})
   Consider the following chain of submodules of
$M$:
\[
IM \sub (I^2M\colon_M I) \sub (I^3M\colon_M I^2) \sub \ldots \sub(I^{n+1}M \colon_M I^n)\sub \ldots
\]
As $M$ is Noetherian this chain stabilizes.
The stable value is denoted as $\widetilde{IM}$ and is called
 the \emph{Ratliff-Rush submodule of $M$ \wrt \ $I$}.
 The filtration
$\{\wt{I^nM}\}_{n \geq 1}$ is called the \textit{Ratliff-Rush filtration} of $M$
\wrt \ $I$.
\end{definition}

\s \label{RRdefn} It can be shown that if $\depth M >0$ then $\wt{I^nM} = I^nM$ for all $n \gg 0$.
Furthermore if $M = A$ then $\wt{I} \subseteq \ov{I}$,  where $\ov{I}$ is the integral closure of $I$. We say an ideal $I$ is Ratliff-Rush if $\wt{I} = I$. Note if $I$ is $\m$-primary and depth of $A$ is positive then $\depth G_I(A) \geq 1$ if and only if  $I^n$ is Ratliff-Rush for all $n \geq 1$.

\s \label{zero-lc} In \cite[4.7]{Pu5} we proved that
\[
H^{0}(L^I(M)) = \bigoplus_{n\geq 0} \frac{\wt{I^{n+1}M}}{I^{n+1}M}.
\]
\s \label{Artin}
For $L^I(M)$ we proved that for $0 \leq i \leq  r - 1$
\begin{enumerate}[\rm (a)]
\item
$H^{i}(L^I(M))$ are  *-Artinian; see \cite[4.4]{Pu5}.
\item
$H^{i}(L^I(M))_n = 0$ for all $n \gg 0$; see \cite[1.10 ]{Pu5}.
\item
 $H^{i}(L^I(M))_n$  has finite length
for all $n \in \mathbb{Z}$; see \cite[6.4]{Pu5}.
\item
$\ell(H^{i}(L^I(M))_n)$  coincides with a polynomial for all $n \ll 0$; see \cite[6.4]{Pu5}.
\end{enumerate}

\s \label{I-FES} The natural maps $0\rt I^nM/I^{n+1}M \rt M/I^{n+1}M \rt M/I^nM \rt 0 $ induce an exact
sequence of $\Sc$-modules
\begin{equation}
\label{dag}
0 \xar G_{I}(M) \xar L^I(M) \xrightarrow{\Pi} L^I(M)(-1) \xar 0.
\end{equation}
We call (\ref{dag}) \emph{the first fundamental exact sequence}.  We use (\ref{dag}) also to relate the local cohomology of $G_I(M)$ and $L^I(M)$.

\s \label{II-FES} Let $x$ be  $M$-superficial \wrt \ $I$ and set  $N = M/xM$ and $u =xt \in \Sc_1$. Notice $L^I(M)/u L^I(M) = L^I(N)$.
For each $n \geq 1$ we have the following exact sequence of $A$-modules:
\begin{align*}
0 \xar \frac{I^{n+1}M\colon x}{I^nM} \xar \frac{M}{I^nM} &\xrightarrow{\psi_n} \frac{M}{I^{n+1}M} \xar \frac{N}{I^{n+1}N} \xar 0, \\
\text{where} \quad \psi_n(m + I^nM) &= xm + I^{n+1}M.
\end{align*}
This sequence induces the following  exact sequence of $\Sc$-modules:
\begin{equation}
\label{dagg}
0 \xar \Bcal^{I}(x,M) \xar L^{I}(M)(-1)\xrightarrow{\Psi_u} L^{I}(M) \xrightarrow{\rho^x}  L^{I}(N)\xar 0,
\end{equation}
where $\Psi_u$ is left multiplication by $u$ and
\[
\Bcal^{I}(x,M) = \bigoplus_{n \geq 0}\frac{(I^{n+1}M\colon_M x)}{I^nM}.
\]
We call (\ref{dagg}) the \emph{second fundamental exact sequence. }

\s \label{long-mod} Notice  $\ell\left(\Bcal^{I}(x,M) \right) < \infty$. A standard trick yields the following long exact sequence connecting
the local cohomology of $L^I(M)$ and
$L^I(N)$:
\begin{equation}
\label{longH}
\begin{split}
0 \xar \Bcal^{I}(x,M) &\xar H^{0}(L^{I}(M))(-1) \xar H^{0}(L^{I}(M)) \xar H^{0}(L^{I}(N)) \\
                  &\xar H^{1}(L^{I}(M))(-1) \xar H^{1}(L^{I}(M)) \xar H^{1}(L^{I}(N)) \\
                 & \cdots \cdots \\
               \end{split}
\end{equation}

\s \label{Artin-vanish} We will use the following well-known result regarding *-Artinian modules quite often:

Let $V$ be a *-Artinian $\Sc$-module.
\begin{enumerate}[\rm (a)]
\item
$V_n = 0$ for all $n \gg 0$
\item
If $\psi \colon V(-1) \rt V$ is a monomorphism then $V = 0$.
\item
If $\phi \colon V \rt V(-1)$ is a monomorphism then $V = 0$.
\end{enumerate}

\s \label{normal}
Recall an ideal $I$ is said to be asymptotically normal if $I^n$ is integrally closed for all $n \gg 0$.
If $I$ is a asymptotically normal  $\m$-primary ideal  and $\dim A \geq 2$ then by a result of Huckaba and Huneke \cite[3.1]{HH}, $\depth G_{I^l}(A) \geq 2$
for all $l \gg 0$(also see \cite[7.3]{Pu5}). It  also follows from \cite[9.2]{Pu5} that in this case $H^1(L)_n = 0$ for $n < 0$. In particular $\ell(H^1(L)) < \infty$ (here $L = L^I(M)$).

\s Let $\eF = \{ M_n \}_{n \in \Z}$ be an $I$-stable filtration with $M_n = M$ for $n \leq 0$.  Set $L^\eF(M) = \bigoplus_{n \geq 0} M/M_{n+1}$. Then as before $L^\eF(M)$ is a $\Sc$-module.
All the results proved for $L^I(M)$ in the $I$-adic case can be proved similarly for $L^\eF(M)$ with the same proofs.

\s \label{asympCM} We will also need the following fact for general filtration's (and was proved for $I$-adic filtration's in \cite[9.2]{Pu5}). Let $\eF = \{ M_n \}_{n \in \Z}$ be an $I$-stable filtration with $M_n = M$ for $n \leq 0$. If the associated graded module of the Veronese $G(\eF^{< c >}, M)$ has depth $\geq 2$ for some $c \geq 1$ then
$H^1(G(\eF, M))_n = 0$ for $n < 0$.

To see this set $L(\eF, M)= \bigoplus_{n \geq 0} M/M_{n+1}$.  For all $l \geq 1$ we have
\[
\left(L(\eF,M)(-1)\right)^{<l>} = \bigoplus_{n \geq 0} M/M_{nl} =  L(\eF^{<l>},M)(-1)
\]
As $\depth G(\eF^{< c >}, M) \geq 2$ it follows from an analogue of \ref{Artin}; \ref{dag} for filtration's and \ref{Artin-vanish} that
$H^i(L(\eF^{<c>},M)) = 0$ for $i = 0,1$. As the Veronese functor commutes with local cohomology we get that
$H^i(L(\eF, M))_{nc-1} = 0$ for all $n \in \Z$. In particular $H^1(L(\eF, M))_{-1} = 0$. Let $x$ be $M$-superficial \wrt \ $\eF$.
Set $N = M/xM$. Then by an analogue of \ref{longH} to filtration's we have  an exact sequence for all $n \in \Z$
\[
H^0(L(\ov{\eF}, N))_n \rt H^1(L(\eF, M))_{n-1} \rt H^1(L(\eF, M))_{n}
\]
As $H^0(L(\ov{\eF}, N))_n = 0$ for $n < 0$ and $H^1(L(\eF, M))_{-1} = 0$;  an easy induction yield's $H^1(L(\eF, M))_{n} = 0$ for $n < 0$.
By the analogue of  exact sequence \ref{dag} and as $H^0(L(\eF, M)_n = 0$ for $n < 0$ we get that $H^1(G(\eF, M))_n = 0$ for $n < 0$.
\section{ Proof of Itoh's-conjecture for normal ideals}
In this section we give a proof of Itoh's conjecture for normal ideals.
\begin{theorem}\label{itoh}
 Let $(A,\m)$ be a \CM \  local ring of dimension $d \geq 1$ and let $\A$ be a normal $\m$-primary ideal.
 If $e_3^\A(A) = 0$ then $G_\A(A)$ is \CM.
\end{theorem}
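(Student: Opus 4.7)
The plan is to deduce Theorem~\ref{itoh} from Lemma~\ref{crucial-itoh-2-introduction} by a local-cohomology chase along the first fundamental exact sequence. After reducing to the case of infinite residue field via the flat base change $A \to A[X]_{\m A[X]}$ of \ref{AtoA'}(a) (which preserves the \CM \ hypothesis on $A$, all Hilbert coefficients, $\depth \GA$, and the normality of $\A$, since each $\A^n$ is integrally closed and this is preserved under polynomial localisation), I would dispose of the two small-dimensional cases by known results: for $d = 1$, normality of $\A$ makes each $\A^n$ integrally closed and hence Ratliff-Rush (see \ref{RRdefn}), so $\depth \GA \geq 1 = d$; for $d = 2$, the classical theorem of Itoh on normal $\m$-primary ideals in a two-dimensional \CM \ local ring gives $\depth \GA \geq 2 = d$. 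Neither of these small cases uses the hypothesis $e_3^\A(A) = 0$.

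For $d \geq 3$, set $L = L^\A(A)$ and $G = \GA$. Lemma~\ref{crucial-itoh-2-introduction} (applied to $M = A$) yields
\[
H^i_\M(L) = 0 \quad \text{for } 1 \leq i \leq d-1.
\]
Since $\A$ is normal, every $\A^{n+1}$ coincides with its Ratliff-Rush closure $\wt{\A^{n+1}}$, so by \ref{zero-lc} we have
\[
H^0_\M(L) \;=\; \bigoplus_{n \geq 0} \wt{\A^{n+1}}/\A^{n+1} \;=\; 0.
\]
Shifting, $H^i_\M(L(-1)) = 0$ for $0 \leq i \leq d-1$ as well.

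Now apply the long exact sequence in local cohomology to the first fundamental exact sequence of \ref{I-FES},
\[
0 \longrightarrow G \longrightarrow L \longrightarrow L(-1) \longrightarrow 0.
\]
For each $i$ with $1 \leq i \leq d-1$ the segment $H^{i-1}_\M(L(-1)) \to H^i_\M(G) \to H^i_\M(L)$ of the long exact sequence has both outer terms zero, forcing $H^i_\M(G) = 0$ throughout this range. Since $\dim G = d$, this gives $\depth G \geq d$, so $G$ is \CM.

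The real obstacle lies entirely in Lemma~\ref{crucial-itoh-2-introduction} itself: that vanishing statement is the technical heart of the paper and draws on the CI-approximation of Theorem~\ref{equiTT} (needed to pass to an ambient Gorenstein situation), the dual-filtration isomorphism of Theorem~\ref{main} (which converts questions about top local cohomology on the Rees algebra into information about an $\A$-stable filtration on $\omega_A$, cf.\ Remark~\ref{pulp}), and the cohomological analysis of $L^\A(A)$ summarised in Section~\ref{Lprop}. Once those tools deliver the middle vanishing of $H^i_\M(L^\A(A))$, the deduction of Theorem~\ref{itoh} above is a short and purely formal chase.
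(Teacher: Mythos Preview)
Your argument for $d \geq 3$ is essentially the paper's own: vanishing of $H^i_\M(L^\A(A))$ for $0 \leq i \leq d-1$ (the $i=0$ case from \ref{zero-lc} and normality, the rest from Lemma~\ref{crucial-itoh-2-introduction}) fed into the long exact sequence of \ref{dag} yields $H^i_\M(G) = 0$ for $0 \leq i \leq d-1$. One tiny slip: you only wrote the segment for $1 \leq i \leq d-1$, so strictly speaking you have not yet killed $H^0_\M(G)$; but this is immediate from the injection $G \hookrightarrow L$ together with $H^0_\M(L) = 0$, or from Remark~\ref{r1-itoh}, and the paper handles it the same way.

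The real gap is your treatment of $d = 2$. There is no ``classical theorem of Itoh'' asserting that a normal $\m$-primary ideal in an arbitrary two-dimensional \CM\ local ring has \CM\ associated graded ring. Results of that flavour (Zariski, Lipman, Huneke--Sally) require $A$ to be regular or pseudo-rational; for a general \CM\ $A$ they fail. The paper's Lemma~\ref{small} genuinely uses the hypothesis $e_3^\A(A) = 0$ in dimension two: with $\depth G_\A(A) \geq 1$ (from normality) Huneke's formula gives $e_3^\A(A) = \sum_{i \geq 2} \binom{i}{2}\sigma_i$ with $\sigma_i = \ell(\A^{i+1}/\C\A^i) \geq 0$, so $e_3^\A(A) = 0$ forces $\sigma_2 = 0$, i.e.\ $\A^3 = \C\A^2$; then $\A^2 \cap \C = \C\A$ (Itoh/Huneke, using only that $\A$ is integrally closed) and Valabrega--Valla finish. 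Your claim that ``neither of these small cases uses the hypothesis $e_3^\A(A) = 0$'' is therefore incorrect, and the $d=2$ case needs exactly this computation.
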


\begin{remark}\label{r1-itoh}
The following assertion is well-known.
 If $\A^n$ is integrally closed for all $n$ then $\A^n$ is Ratliff-Rush for all $n \geq 1$.
 It follows that $\depth G_\A(A) \geq 1$, see \ref{RRdefn}.
\end{remark}

We first show that
\begin{lemma}\label{small}
 Theorem \ref{itoh} holds  if $\dim A = 1, 2$.
 \end{lemma}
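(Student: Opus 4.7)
The lemma splits along dimension. When $d = 1$ the hypothesis $e_3(\A) = 0$ is not needed at all: since $\A$ is normal, each $\A^n$ is integrally closed, hence Ratliff--Rush (as $\wt{I} \subseteq \ov{I}$ always holds), and by Remark \ref{r1-itoh} this forces $\depth \GA \geq 1 = \dim \GA$, so $\GA$ is CM.

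For $d = 2$ the same argument still gives $H^0_\M(\GA) = 0$. I would first pass to the flat extension in \ref{AtoA'}(a) so that the residue field is infinite; normality of $\A$, the depth of $\GA$, and all Hilbert coefficients are preserved. A normal ideal is asymptotically normal, so the Huckaba--Huneke result recorded in \ref{normal} provides some $c \geq 1$ with $\depth G_{\A^c}(A) \geq 2$, i.e.\ $G_{\A^c}(A)$ is CM. The Veronese induction in \ref{asympCM} then gives $H^1_\M(\GA)_n = 0$ for every $n < 0$. Combined with the Artinian vanishing $H^1_\M(\GA)_n = 0$ for $n \gg 0$ from \ref{Artin}, $H^1_\M(\GA)$ has finite length and is concentrated in a finite range of non-negative degrees; it remains to kill those degrees.

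To use $e_3(\A) = 0$, set $G = \GA$, apply Serre's formula $\chi(n) := H_G(n) - P_G(n) = \sum_i (-1)^i \ell(H^i_\M(G)_n)$, and sum to express $\ell(A/\A^{n+1}) - P_\A(n)$ as the negative tail sum $-\sum_{k > n}\chi(k)$. Multiplying the resulting Hilbert series by $(1-t)^{d+1} = (1-t)^3$ and computing the third derivative at $t = 1$ gives, after using $H^0_\M(G) = 0$, the identity
\[
e_3(\A) \;=\; \sum_{k \geq 1} k\,\bigl[-\ell(H^1_\M(G)_k) + \ell(H^2_\M(G)_k)\bigr].
\]
Granted the positive-degree vanishing $H^2_\M(G)_k = 0$ for $k \geq 1$, the hypothesis $e_3(\A) = 0$ together with nonnegativity of each length forces $H^1_\M(G)_k = 0$ for every $k \geq 1$. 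A short use of the second fundamental exact sequence \ref{dagg} applied to a superficial element $x \in \A$ (which is $G$-regular on initial forms because $\depth G \geq 1$) then disposes of the remaining piece $H^1_\M(G)_0$, yielding $H^1_\M(\GA) = 0$ and hence the Cohen--Macaulayness of $\GA$.

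The main obstacle is the positive-degree vanishing $H^2_\M(G)_k = 0$ for $k \geq 1$, equivalently the non-positivity of the $a$-invariant of $G_\A(A)$. The natural route is local duality applied to the canonical module of the CM Veronese $G_{\A^c}(A)$ together with a descent back to $G_\A(A)$, using that normality of $\A$ (rather than merely asymptotic normality) pins down the relevant $a$-invariants; this is the only step where I expect real technical work.
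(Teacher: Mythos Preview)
Your $d=1$ argument is correct and coincides with the paper's.

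For $d=2$ your route is both much more elaborate than necessary and, as you yourself flag, incomplete. You reduce everything to the bound $a(G_\A(A))\leq 0$ (equivalently $H^2_\M(G)_k=0$ for $k\geq 1$) and then say this ``is the only step where I expect real technical work''; but you give no argument for it, and nothing in the preceding steps (the Veronese Cohen--Macaulayness of $G_{\A^c}(A)$, the vanishing in \ref{asympCM}) controls the top local cohomology in positive degrees. So the proof has a genuine gap at exactly the place where the hypothesis $e_3=0$ must do its work. Your Serre-formula identity also needs care: even granting the $a$-invariant bound, one must check the exact shape of the sum (it should involve $\binom{k}{2}$ rather than $k$ if one is extracting $e_3$), and the final ``short use of \ref{dagg}'' to kill $H^1_\M(G)_0$ is not spelled out.

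The paper avoids all of this. With infinite residue field and a minimal reduction $\C$ of $\A$, set $\sigma_i=\ell(\A^{i+1}/\C\A^i)$. Since $\depth G_\A(A)\geq 1$, Huneke's formula gives the $h$-polynomial directly in terms of the $\sigma_i$, and one reads off $e_3^\A(A)=\sum_{i\geq 2}\binom{i}{2}\sigma_i$. As each $\sigma_i\geq 0$, the hypothesis $e_3=0$ forces $\sigma_2=0$, i.e.\ $\A^3=\C\A^2$. Because $\A$ is integrally closed one also has $\A^2\cap\C=\C\A$ (Itoh/Huneke), and Valabrega--Valla then gives Cohen--Macaulayness of $G_\A(A)$ immediately. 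This is elementary and sidesteps any $a$-invariant analysis; I would recommend replacing your $d=2$ argument with this one.
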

 We note that $e_3^\A(A)$ is defined as in  \ref{hilb-gt-d}.
 This Lemma is certainly well-known to the experts. However we provide a proof
 for the convenience of the reader.
\begin{proof}[Proof of Lemma \ref{small}]
 If $d = 1$ then by \ref{r1-itoh} the result holds.

 Now assume $d = 2$. Note that we may assume that $A$ has an infinite residue field. Let $\C$ be a minimal reduction of $\A$.
 Set $\sigma_i = \ell(\A^{i+1}/\C \A^i)$. Then  as $\depth G_\A(A)$ is positive it follows from a result of Huneke \cite[2.4]{Hun}
 that the $h$-polynomial of $A$ \wrt \ $\A$ is
 \[
  h(t) =  \ell(A/\A) + (\sigma_0 - \sigma_1)t + (\sigma_1 - \sigma_2)t^2 + \cdots + (\sigma_{s-2} - \sigma_{s-1})t^{s-1} + \sigma_{s-1} t^s.
 \]
It follows that
\begin{align*}
 e_1^\A(A) &= \sum_{i \geq 0} \sigma_i, \\
 e_2^\A(A) &= \sum_{i \geq 1} i \sigma_i, \quad  \text{and} \\
 e_3^\A(A) &= \sum_{i \geq 2} \binom{i}{2} \sigma_i.
 \end{align*}
As $e_3^\A(A) = 0$ we have $\sigma_2 = 0$. So we have $\A^3 = \C \A^2$. As $\A$ is integrally closed  we also have
$\A^2 \cap \C = \C \A$, see \cite[Theorem 4.7 ]{Hun} (for rings containing a field) and \cite[Theorem 1]{It}. The result follows from Valabrega-Valla Theorem
\cite[2.3]{VV}.
\end{proof}
\s Recall that an ideal $\A$ is said to be \textit{asymptotically normal} if $\A^n$ is integrally closed for all $n \gg 0$.
The crucial result to prove Itoh's conjecture is the following:
\begin{lemma}\label{crucial-itoh}
Let $(A,\m)$ be a \CM \    local ring of dimension $3$ and let $\A$ be an asymptotically  normal $\m$-primary ideal with $e_3^\A(A) = 0$. Set $L^\A(A) = \bigoplus_{n \geq 0}A/\A^{n+1}$ considered as a  module over the Rees algebra $\Sc = A[\A t]$. Let $\M = \m\oplus \Sc_+$ be the maximal homogeneous ideal of $\Sc$. Then the local cohomology modules $H^i_\M(L^\A(A))$ vanish for $i = 1, 2$
\end{lemma}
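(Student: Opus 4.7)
The plan is to reduce to dimension two by modding out a good superficial element, deduce $H^1_\M(L^\A(A)) = 0$ from the $*$-Artinian vanishing mechanism of \ref{Artin-vanish}, and then extract $H^2_\M(L^\A(A)) = 0$ from the hypothesis $e_3^\A(A) = 0$ via an Euler--Poincar\'e / Hilbert--series identity.

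First, by the flat base change \ref{AtoA'}(a) I would assume $k$ is infinite, and by the general extension \ref{AtoA'}(d) I would fix $y \in \A$ which is $\A$-superficial and such that $\bar\A := \A B$ is asymptotically normal in $B := A/(y)$. These moves preserve Cohen-Macaulayness of $A$, the value $e_3^\A(A)$, asymptotic normality of $\A$, and the lengths of all graded pieces of $H^i_\M(L^\A(A))$. Write $L = L^\A(A)$ and $\bar L = L^{\bar\A}(B)$. Since $B$ is Cohen-Macaulay of dimension two with $\bar\A$ asymptotically normal, \ref{normal} gives $\depth G_{\bar\A^l}(B) \geq 2$ for $l \gg 0$, so each $G_{\bar\A^l}(B)$ is Cohen-Macaulay. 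Using \ref{asympCM} applied to the $\bar\A$-adic filtration on $B$, together with a cut-down by a superficial element for $\bar\A$ via the analogue of \ref{longH}, one deduces $H^0(\bar L) = H^1(\bar L) = 0$.

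The long exact sequence \ref{longH} attached to the second fundamental sequence (with $x = y$) then yields, after using the established vanishings of $H^0(\bar L), H^1(\bar L)$ and the finite length of $\Bcal^\A(y, A)$, a monomorphism $H^1(L)(-1) \hookrightarrow H^1(L)$ of $*$-Artinian modules (past a finite-length obstruction concentrated in a single degree). Because $H^1(L)_n = 0$ for $n \gg 0$ by \ref{Artin}(b) and for $n < 0$ by \ref{normal}, the $*$-Artinian vanishing \ref{Artin-vanish}(b) forces $H^1(L) = 0$.

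The remaining and hardest step is to squeeze $H^2(L) = 0$ out of $e_3^\A(A) = 0$. The key is a Hilbert-series / Euler-Poincar\'e identity: since $\ell(L_n) = \ell(A/\A^{n+1})$ coincides with the Hilbert-Samuel polynomial $P^\A_A(n)$ for $n \gg 0$, and since the graded pieces of $H^i(L)$ for $i \leq 2$ have finite length (\ref{Artin}(c)) with polynomial behavior for $n \ll 0$ (\ref{Artin}(d)) and vanish in high degree (\ref{Artin}(b)), once the lower cohomologies $H^0(L) = H^1(L) = 0$ are in hand one obtains
\[
e_3^\A(A) \;=\; \sum_{n \in \mathbb{Z}} \ell\bigl(H^2_\M(L)_n\bigr),
\]
with each summand a non-negative integer (consistent with Itoh's inequality $e_3 \geq 0$ for normal ideals). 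Thus $e_3^\A(A) = 0$ forces $H^2_\M(L)_n = 0$ for every $n$, finishing the proof.

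The main obstacle is justifying this displayed identity: $L$ is not finitely generated over $\Sc$, so the convergence of the sum and the non-negativity of each summand are not automatic and must be established by carefully combining the finite-length and polynomial-growth inputs of \ref{Artin}(b,c,d) with the precise form of the Hilbert-Samuel polynomial of $A$ with respect to $\A$. Once that identity is in place, the two vanishings combine to give the lemma.
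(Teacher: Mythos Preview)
Your plan has genuine gaps at both stages, and in fact the difficulty of the two vanishings is the opposite of what you assume.

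\textbf{The claim $H^0(\bar L)=H^1(\bar L)=0$ is false.} Asymptotic normality of $\bar\A$ in the two–dimensional ring $B=A/(y)$ gives, via Huckaba--Huneke and \ref{asympCM}, only that $H^1(\bar L)_n=0$ for $n<0$ (equivalently $\ell(H^1(\bar L))<\infty$); it does \emph{not} kill $H^1(\bar L)$ in non-negative degrees. Likewise $H^0(\bar L)=\bigoplus_n \widetilde{\bar\A^{n+1}}/\bar\A^{n+1}$ need not vanish, since $\bar\A$ is only asymptotically normal and its low powers need not be Ratliff--Rush. Consequently, in the long exact sequence \ref{longH} the map $H^1(L)(-1)\to H^1(L)$ has no reason to be injective in non-negative degrees, and \ref{Artin-vanish}(b) does not apply. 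Your argument for $H^1(L)=0$ therefore collapses.

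\textbf{The identity $e_3^\A(A)=\sum_n\ell(H^2(L)_n)$ is not available.} Even if one could formulate and prove such an Euler--Poincar\'e identity for the non–finitely-generated module $L$, it would at best be an alternating sum over $i=0,1,2$; to isolate $H^2(L)$ you need $H^0(L)=H^1(L)=0$. But $H^0(L)=0$ fails in general under the hypothesis ``asymptotically normal'' (as opposed to ``normal''), and $H^1(L)=0$ is precisely what you have not proved. So this step is circular. In the paper the roles are reversed: $H^2(L)=0$ is the \emph{easier} vanishing, obtained from $e_3=0$ via the reduction-number bound $\red(\A^n)\le 2$ (Lemma~\ref{asymp-itoh}), Hoa's bound $a(G_\A(A))<0$, and the partial vanishing $H^1(\bar L)_n=0$ for $n<0$; see Lemma~\ref{rachel-itoh}(2). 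The hard part is $H^1(L)=0$, and it is precisely here that the CI-approximation and dual-filtration machinery of Sections~\ref{Gorapp-special}--\ref{sectionGAPP} enter: one passes to a complete-intersection $(T,\q)$ with $G_\q(T)$ Gorenstein, dualizes to the filtration on $\omega_A$, and uses Theorem~\ref{ing} to force $H^2(G_\A(A))=0$, which together with $H^2(L)=0$ and $\ell(H^1(L))<\infty$ yields $H^1(L)=0$. Your outline bypasses this entire apparatus, and without it the conclusion does not follow.
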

We will also need to extend Lemma \ref{crucial-itoh} to dimensions $d \geq 4$.
\s \emph{Remark and a Convention:} Note that all the relevant graded modules considered upto Lemma \ref{rachel-itoh}  below are modules over the Rees algebra $\Sc = A[\A t]$.  Also all local cohomology will
taken over $\M = \m\oplus \Sc_+$  the maximal homogeneous ideal of $\Sc$. Note $G_\A(A)$ is a quotient of $\Sc$.
We also note that if $x$ is $A$-superficial \wrt \ $A$ then the Rees algebra $\Sc^\prime  = A/(x)[\A/(x)t]$ is a quotient of $\Sc$. As we are only interested in vanishing of certain local-cohomology modules, by the independence theorem of local cohomology
it does not matter if we take local cohomology of an $\Sc^\prime$-module \wrt \ $\M^\prime$ or over $\M$ (and considering the module in question as an $\Sc$-module). So throughout we will only write $H^i(-)$ to mean $H^i_\M(-)$.

\begin{lemma}\label{crucial-itoh-2}
Let $(A,\m)$ be a \CM \ local ring of dimension $d \geq 3$ and let $\A$ be an asymptotically  normal $\m$-primary ideal with $e_3^\A(A) = 0$.  Then the local cohomology modules $H^i(L^\A(A))$ vanish for $1 \leq i \leq d -1$.
\end{lemma}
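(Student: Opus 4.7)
The plan is to prove the lemma by induction on $d$, with the base case $d = 3$ supplied by Lemma \ref{crucial-itoh}. Before beginning the induction, I would pass to the general extension $A' = A[X_1, \ldots, X_n]_S$ of \ref{AtoA'}(d). This faithfully flat local extension preserves $\dim$, the Hilbert coefficients, and asymptotic normality, and by flatness the vanishing of $H^i(L^\A(A))$ is equivalent to that of $H^i(L^{\A'}(A'))$. Crucially, the Ciuperca theorem cited in \ref{AtoA'}(d) produces an element $y \in A'$ that is $\A'$-superficial and such that the image ideal in $A'/(y)$ remains asymptotically normal. Since $y$ is a regular superficial element in a CM ring of dimension $d$, we have $e_3^{\A'/(y)}(A'/(y)) = e_3^{\A'}(A') = 0$ whenever $d - 1 \geq 3$, so the induction hypothesis will apply to $A'/(y)$. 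After renaming $A'$ as $A$, set $L = L^\A(A)$ and $N = A/(y)$.

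Now apply the second fundamental exact sequence \ref{II-FES} and its long cohomology sequence \ref{longH}:
\[
\cdots \to H^{i-1}(L^\A(N)) \to H^i(L)(-1) \xrightarrow{\cdot u} H^i(L) \to H^i(L^\A(N)) \to \cdots,
\]
with $u = yt$. For $2 \leq i \leq d - 1$, the induction hypothesis gives $H^{i-1}(L^\A(N)) = 0$, so we obtain an injection $H^i(L)(-1) \hookrightarrow H^i(L)$. Since \ref{Artin}(a) makes $H^i(L)$ a $*$-Artinian $\Sc$-module for every $i \leq d - 1$, \ref{Artin-vanish}(b) forces $H^i(L) = 0$. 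The remaining case $i = 1$ needs a different twist: here the induction hypothesis delivers $H^1(L^\A(N)) = 0$, whence $\cdot u : H^1(L)(-1) \to H^1(L)$ is surjective, i.e., $H^1(L)_{n-1}$ surjects onto $H^1(L)_n$ for every $n \in \Z$. Because $\A$ is asymptotically normal, \ref{normal} yields $H^1(L)_n = 0$ for $n < 0$; feeding this vanishing into the chain of surjections starting at $n = 0$ propagates it upward, and so $H^1(L) = 0$.

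The principal obstacle I anticipate is ensuring that the inductive step is meaningful: for an arbitrary $\A$-superficial $y$ there is no reason why $\A/(y)$ should remain asymptotically normal, yet this is exactly what is needed to invoke the induction hypothesis in the correct form. This is precisely why the reduction to the general extension via Ciuperca's theorem is indispensable at the outset. Once that transfer of hypothesis is secured, the remainder of the argument is a clean application of the $*$-Artinian vanishing criteria attached to the fundamental exact sequences reviewed in Section \ref{Lprop}.
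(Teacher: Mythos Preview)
Your proof is correct and follows essentially the same route as the paper: induction on $d$ with base case $d=3$ from Lemma~\ref{crucial-itoh}, passage to a general extension via \ref{AtoA'}(d) to secure an $\A$-superficial element with asymptotically normal quotient, and then the long exact sequence \ref{longH} combined with the $*$-Artinian criteria \ref{Artin-vanish} to kill $H^i(L)$ for $2\le i\le d-1$. The only cosmetic difference is in the $i=1$ step: the paper uses that $H^1(L)$ has finite length (also from \ref{normal}) so that the surjection $H^1(L)(-1)\twoheadrightarrow H^1(L)$ is an isomorphism and hence $H^1(L)=0$, whereas you use the vanishing of $H^1(L)_n$ for $n<0$ and propagate the surjections upward---both arguments are equivalent applications of \ref{normal}.
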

\begin{proof}
We prove the result by induction on $d \geq 3$.
For $d = 3$ the result follows from Lemma \ref{crucial-itoh}. Now assume $d \geq 4$ and the result has been proved for $d-1$.
After passing through a general extension (see \ref{AtoA'}(d)) we may choose $x$, an $A$-superficial element
\wrt  \ $\A$ such that in the ring $B = A/(x)$ the ideal $\B = \A/(x)$ is asymptotically normal. Furthermore note that $e_3^\B(B) = 0$.
Set $L = L^\A(A)$ and $\ov{L} = L^\B(B)$. By induction hypothesis we have $H^i(\ov{L}) = 0$ for $1 \leq i \leq d-2$. By \ref{longH} we get a surjective map
$H^1(L)(-1) \rt H^1(L)$ and for $2 \leq i \leq d-1$ injections
$H^i(L)(-1) \rt H^i(L)$. By \ref{Artin-vanish} it follows that $H^i(L) = 0$ for $i = 2,\cdots d - 1$. By \ref{normal} we get that $H^1(L)$ has finite length. So the surjection
$H^1(L)(-1) \rt H^1(L)$ induces an isomorphism $H^1(L)(-1) \cong H^1(L)$ and this forces $H^1(L) = 0$.
\end{proof}

We now give a proof of Theorem \ref{itoh} assuming Lemma \ref{crucial-itoh}.
\begin{proof}[Proof of Theorem \ref{itoh}]
If $d = \dim A = 1,2$, the result follows from Lemma \ref{small}. If $d \geq 3$
then by Lemma \ref{crucial-itoh-2} we get $H^i(L^\A(A)) = 0$  for $1 \leq i \leq d -1$. Also as $\A$ is normal,  in-particular $\A^n$ is Ratliff-Rush for all
$n \geq 1$. So by \ref{zero-lc} we get $H^0(L^\A(A)) = 0$. By taking cohomology of the first fundamental sequence \ref{dag} we get that $H^i(G_\A(A)) = 0$
for  $0 \leq i \leq d -1$. Thus $G_\A(A)$ is \CM.
\end{proof}

\s \label{red-itoh} Thus to prove Itoh's conjecture all we have to do is to prove Lemma \ref{crucial-itoh}. This requires several preparatory results.
\emph{For the rest of this section we will assume $\dim A = 3$ and $\A$ is an asymptotically normal ideal with $e_3^\A(A) = 0$}. We will also assume that the residue field of $A$ is infinite.
We first show
\begin{lemma}\label{asymp-itoh}
(with hypotheses as in \ref{red-itoh}.) Then $G_{\A^n}(A)$ is \CM \ for $n \gg 0$
and $\red(\A^n) \leq 2$ for $n \gg 0$.
\end{lemma}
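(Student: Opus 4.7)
The plan is to combine the depth lower bound provided by asymptotic normality with the invariance $e_3^{\A^l} = e_3^\A$ under taking powers, Huneke's formula for the $h$-polynomial, and Itoh's theorem on integrally closed ideals.

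First I would record two preliminaries. By \ref{normal}, $\depth G_{\A^l}(A) \geq 2$ for every $l \gg 0$; and since $\A$ is asymptotically normal, $\A^l$ is itself normal for every $l \gg 0$. Next, $e_3^{\A^l}(A) = e_3^\A(A) = 0$ for all $l \geq 1$: indeed, $H_{\A^l}(m) = \ell(A/\A^{l(m+1)}) = H_\A(lm + l - 1)$ for $m \gg 0$ yields the polynomial identity $P_{\A^l}(X) = P_\A(lX + l - 1)$, and evaluation at $X = -1$ gives $P_{\A^l}(-1) = P_\A(-1)$; since $P_\B(-1) = (-1)^d e_d^\B(A)$ for any $\m$-primary $\B$ in a $d$-dimensional ring, the claim follows.

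Now fix such an $l$. Since the residue field of $A$ is infinite and $\depth G_{\A^l}(A) \geq 1$, choose $x \in \A^l$ which is $\A^l$-superficial with $x^*$ regular in $G_{\A^l}(A)$, and extend $x$ to a minimal reduction $\C = (u_1, u_2, x)$ of $\A^l$ (having $\spr(\A^l) = 3$ generators). Set $\bar A = A/(x)$, $\bar{\A^l} = \A^l/(x)$, $\bar\C = (\bar u_1, \bar u_2)$. The regularity of $x^*$ together with $x \in \A^l$ gives $(x) \cap \A^{l(i+1)} = x\A^{li}$, from which one verifies $h^{\bar{\A^l}}(t) = h^{\A^l}(t)$ (so in particular $e_3^{\bar{\A^l}}(\bar A) = e_3^{\A^l}(A) = 0$) and
\[
\sigma_i := \ell(\A^{l(i+1)}/\C \A^{li}) = \ell(\bar{\A^l}^{\,i+1}/\bar\C\, \bar{\A^l}^{\,i}) \qquad (i \geq 0).
\]
In the two-dimensional $\bar A$ with $\depth G_{\bar{\A^l}}(\bar A) \geq 1$, Huneke's formula \cite[2.4]{Hun} (exactly as used in the proof of Lemma \ref{small}) then yields $e_3^{\bar{\A^l}}(\bar A) = \sum_{i \geq 2} \binom{i}{2} \sigma_i$. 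Since the $\sigma_i$ are non-negative and the sum is zero, $\sigma_i = 0$ for all $i \geq 2$; hence $\A^{3l} = \C \A^{2l}$, i.e., $\red(\A^l) \leq 2$.

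Finally, since $\A^l$ is integrally closed, Itoh's theorem \cite[Theorem 1]{It} gives $(\A^l)^2 \cap \C = \C \A^l$. Combining this with $\A^{l(j+1)} = \C \A^{lj}$ for $j \geq 2$ (from $\red(\A^l) \leq 2$) and the trivial $\A^l \cap \C = \C$, the Valabrega-Valla condition $\A^{l(j+1)} \cap \C = \C \A^{lj}$ holds for every $j \geq 0$. By \cite[2.3]{VV}, the initial forms $u_1^*, u_2^*, x^*$ form a regular sequence in $G_{\A^l}(A)$, which, being three-dimensional, is therefore \CM. The main delicate point is ensuring that Huneke's two-dimensional $h$-polynomial formula transfers cleanly to our three-dimensional setting; this is achieved by the critical choice of the superficial element $x$ \emph{inside $\A^l$} (rather than merely inside $\A$), which is precisely what makes the quotient by $(x)$ preserve both the $h$-polynomial and each $\sigma_i$.
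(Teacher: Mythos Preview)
Your proof is correct and follows essentially the same approach as the paper: both use the Huckaba--Huneke depth bound, the identity $e_3^{\A^l}=e_3^\A=0$, the Huneke/Huckaba $h$-polynomial formula to force $\sigma_2=0$, and then Itoh's theorem together with Valabrega--Valla. The only difference is that the paper quotes Huckaba's $d$-dimensional extension \cite[2.11]{Huck} directly in dimension~$3$, whereas you mod out by a superficial element $x$ with $x^*$ regular and invoke Huneke's two-dimensional formula --- this is precisely the inductive step in Huckaba's proof, so you are inlining that citation rather than taking a different route.
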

\begin{remark}
Lemma \ref{asymp-itoh} is certainly known to the experts. We provide a proof for the convenience of the reader.
\end{remark}
\begin{proof}[Proof of Lemma \ref{asymp-itoh}]
Let $\C = (x_1, x_2, x_3)$ be a minimal reduction of $\A$.
Notice that for all $n \geq 1$ we have $e_3^{\A^n}(A) = e_3^\A(A) = 0$. Furthermore $\C^{[n]} = (x_1^n, x_2^n, x_3^n)$ is  a minimal reduction of $\A^n$.
By a result of Huckaba and Huneke \cite[3.1]{HH}  we have $\depth G_{\A^n}(A) \geq 2$
for all $n \gg 0$, say $n \geq n_0$. As $\A$ is asymptotically normal we may also assume $\A^n$ is integrally closed for all $n \geq n_0$

Fix $n \geq n_0$.  Set $\sigma_i = \ell( (\A^n)^{i+1}/ \C^{[n]} (\A^n)^{i})$. Then by a result of Huckaba \cite[2.11]{Huck}, it follows that
that the $h$-polynomial of $A$ \wrt \ $\A^n$ is
 \[
  h(t) =  \ell(A/\A^n) + (\sigma_0 - \sigma_1)t + (\sigma_1 - \sigma_2)t^2 + \cdots + (\sigma_{s-2} - \sigma_{s-1})t^{s-1} + \sigma_{s-1} t^s.
 \]
It follows that
\begin{align*}
 e_1^{\A^n}(A) &= \sum_{i \geq 0} \sigma_i, \\
 e_2^{\A^n}(A) &= \sum_{i \geq 1} i \sigma_i, \text{and} \\
 e_3^{\A^n}(A) &= \sum_{i \geq 2} \binom{i}{2} \sigma_i.
 \end{align*}
As $e_3^{\A^n}(A) = 0$ we have $\sigma_2 = 0$. So we have $(\A^n)^3 = \C^{[n]} (\A^n)^2$. Thus $\red(\A^n) \leq 2$ for $n \geq n_0$. As $\A^n$ is integrally closed  we also have
$(\A^n)^2 \cap \C^{[n]} = \C^{[n]} \A^n$.  So by Valabrega-Valla Theorem \cite[2.3]{VV} we have that $G_{\A^n}(A)$ is \CM.
\end{proof}
Next we show
\begin{lemma}\label{rachel-itoh}
(with hypotheses as in \ref{red-itoh}.) We have
\begin{enumerate}[ \rm(1)]
\item
$a(G_\A(A)) < 0$.
\item
$H^2(L^\A(A)) = 0$.
\item
$G_\A(A)$ is generalized \CM.
\item
$\sum_{i = 0}^{2}(-1)^i\ell(H^i(G_\A(A)))  = 0.$
\item
For $i = 0, 1, 2$ we have $H^i(G_\A(A))_n = 0$ for $n < 0$. Furthermore $H^2(G_\A(A))_0 = 0$.
\end{enumerate}
\end{lemma}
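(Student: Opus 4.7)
The plan is to leverage Lemma \ref{asymp-itoh}: for $c \gg 0$, $G_{\A^c}(A)$ is \CM \ with $\red(\A^c) \leq 2$, hence $a(G_{\A^c}(A)) \leq -1$ and $H^i(G_{\A^c}(A)) = 0$ for $i \leq 2$. The first fundamental sequence \ref{I-FES} for $\A^c$ together with \ref{Artin-vanish} then yields $H^i(L^{\A^c}(A)) = 0$ for $0 \leq i \leq 2$. The Veronese identity $(L^\A(A)(-1))^{<c>} = L^{\A^c}(A)(-1)$ (as used in the proof of \ref{asympCM}), combined with commutativity of local cohomology with the Veronese functor, then provides the key single-degree vanishings $H^i(L^\A(A))_{-1} = 0$ for $i = 0, 1, 2$.

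After a flat extension \ref{AtoA'}(d), I choose $x \in \A$ superficial so that $\B := \A/(x)$ is asymptotically normal in $N := A/(x)$ with $\dim N = 2$. Since $\dim G_\B(N) = 2$ forces $H^3(G_\B(N)) = 0$, the first fundamental sequence for $\B$ gives surjections $H^2(L^\B(N))_n \twoheadrightarrow H^2(L^\B(N))_{n-1}$; descending induction from the $*$-Artinian vanishing \ref{Artin}(b) then forces $H^2(L^\B(N)) = 0$. Moreover \ref{normal} applied to $\B$ gives $H^1(L^\B(N))_n = 0$ for $n < 0$, and $H^0(L^\B(N))_n = 0$ for $n < 0$ holds trivially.

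For (2), plug into the second fundamental sequence \ref{longH} with $x$ at $i = 2$:
\[
H^1(L^\B(N))_n \xar H^2(L^\A(A))_{n-1} \xar H^2(L^\A(A))_n \xar H^2(L^\B(N))_n = 0.
\]
The rightmost vanishing gives surjectivity, and ascending induction starting from $H^2(L^\A(A))_{-1} = 0$ yields $H^2(L^\A(A))_n = 0$ for $n \geq -1$; the injectivity from $H^1(L^\B(N))_n = 0$ at $n < 0$ plus descending induction yields vanishing for $n \leq -1$, proving (2). Analogous second-fundamental arguments for $i = 0, 1$ give $H^i(L^\A(A))_n = 0$ for $n \leq -1$. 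The first fundamental sequence $0 \to G \to L \to L(-1) \to 0$ then transfers these vanishings to (5): for $n \leq -1$ and $i \leq 2$, both flanking terms vanish, so $H^i(G)_n = 0$; at $n = 0$, $i = 2$, the sequence $H^1(L^\A(A))_{-1} = 0 \to H^2(G)_0 \to H^2(L^\A(A))_0 = 0$ forces $H^2(G)_0 = 0$. Part (3) then follows from (5) and the graded Artinian structure of $H^i(G)$.

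For (4), the vanishing $H^2(L^\A(A)) = 0$ collapses the long exact sequence of the first fundamental sequence into the finite exact sequence
\[
0 \to H^0(G) \to H^0(L) \to H^0(L)(-1) \to H^1(G) \to H^1(L) \to H^1(L)(-1) \to H^2(G) \to 0
\]
of finite-length modules ($\ell(H^0(L)) < \infty$ from asymptotic normality, $\ell(H^1(L)) < \infty$ from \ref{normal}, $\ell(H^i(G)) < \infty$ from (3)). Since $\ell(M(-1)) = \ell(M)$, taking Euler characteristics makes the $H^i(L)$ contributions cancel, yielding (4): $\sum_{i=0}^2 (-1)^i \ell(H^i(G)) = 0$. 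Finally, Serre's formula $\ell(G_n) - P_G(n) = \sum_i (-1)^i \ell(H^i(G)_n)$ summed over $n \geq 0$, together with (5) and the identity $P^\A_A(-1) - H^\A_A(-1) = -e_3^\A(A) = 0$, gives $\ell(H^3(G)_{\geq 0}) = \sum_{i=0}^2 (-1)^i \ell(H^i(G))$; combined with (4) this forces $H^3(G)_n = 0$ for $n \geq 0$, proving (1). The principal obstacle is establishing the dimension-2 vanishing $H^2(L^\B(N)) = 0$ cleanly enough for the second fundamental sequence to propagate the single Veronese vanishing $H^2(L^\A(A))_{-1} = 0$ to all degrees.
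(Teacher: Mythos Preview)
Your argument contains a genuine gap in the proof of (2), and since your proofs of (4), (1), and the final clause of (5) all rest on (2), the gap propagates.

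The problem is the claim $H^2(L^\B(N)) = 0$ for the two--dimensional quotient $N = A/(x)$. You justify this by invoking \ref{Artin}(b), but that result only gives $H^i(L^I(M))_n = 0$ for $n \gg 0$ when $0 \le i \le \dim M - 1$. Here $\dim N = 2$ and you need $i = 2$, so \ref{Artin}(b) does not apply. In fact the claim is false: from the exact sequence $0 \to \eS_\B(N) \to N[t] \to L^\B(N)(-1) \to 0$ one computes (using $\sqrt{\M N[t]} = \m N[t]$) that $H^2_\M(N[t])_n \cong H^2_\m(N)$ for every $n \ge 0$, and when $\eS_\B(N)$ is \CM \ this forces $H^2(L^\B(N))_{n} \cong H^2_\m(N) \neq 0$ for all $n \ge -1$. (Try $N = k[[x,y]]$, $\B = (x,y)$.) So the surjections $H^2(L^\A(A))_{n-1} \twoheadrightarrow H^2(L^\A(A))_n$ that your ascending induction needs are simply not available from the second fundamental sequence.

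The descending half of your argument is fine and does yield $H^2(L^\A(A))_n = 0$ for $n \le -1$, starting from the Veronese vanishing at $n = -1$. What you are missing is a mechanism to push this up to $n \ge 0$. The paper handles this in the opposite order: it first proves (1) directly, by combining $\red(\A^n) \le 2$ for $n \gg 0$ with Hoa's theorem \cite[2.1]{Hoa} on reduction numbers and the $a$-invariant. With $a(G_\A(A)) < 0$ in hand, the \emph{first} fundamental sequence gives $H^2(L^\A(A))_n \twoheadrightarrow H^2(L^\A(A))_{n-1}$ for all $n \ge 0$ (since $H^3(G)_n = 0$ there), and the $*$-Artinian vanishing of $H^2(L^\A(A))$ in high degree---which \emph{is} covered by \ref{Artin}(b) because now $i = 2 \le \dim A - 1$---finishes the upper range. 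Your Serre--formula derivation of (1) from (4) and (5) is an interesting idea, but it cannot get off the ground without (2), and (2) in turn seems to need (1) or an equivalent external input like Hoa's result.
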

\begin{proof}
 (1) We have $\red_{\C^{[n]}}(\A^n) \leq 2$ for all $n \gg 0$. By a result of Hoa, \cite[2.1]{Hoa}, the result follows.

 (2) Set $G = G_\A(A)$ and $L = L^\A(A)$.
 The first fundamental exact sequence $0 \rt G \rt L \rt L(-1) \rt 0$ yields an exact sequence in cohomology
 \[
 H^2(L)_n \rt H^2(L)_{n-1} \rt H^3(G)_n.
 \]
As $a(G) < 0$ we get for $n \geq 0$, surjections $H^2(L)_{n} \rt H^2(L)_{n-1}$. As $H^2(L)_n = 0$ for $n \gg 0$ we get
$H^2(L)_n = 0$ for $n \geq -1$.

After passing through a general extension we may choose $x$, an $A$-superficial element
\wrt  \ $\A$ such that in the ring $B = A/(x)$ the ideal $\B = \A/(x)$ is asymptotically normal. Also notice $\dim B = 2$.
Set $\ov{L} = L^\B(B)$. By \cite[9.2]{Pu5} we get that $H^1(\ov{L})_n = 0$ for $n < 0$.
By \ref{longH} we have an exact sequence
\[
 H^1(\ov{L})_n \rt H^2(L)_{n-1} \rt H^2(L)_n
\]
By setting $n = -1$ we get $H^2(L)_{-2} = 0$. Iterating we get $H^2(L)_n = 0$ for all $n \leq -2$.
It follows that $H^2(L) = 0$.

(3)  As $G_{\A^n}(A)$ is \CM \ for all $n \gg 0$, the ring $G_\A(A)$ is generalized \CM; see \cite[7.8]{Pu5}.

(4) As $G_\A(A)$ is generalized \CM \ we get that $H^i(G)$ has finite length for $i = 0, 1, 2$.
We also have $H^0(L)$
and $H^1(L)$ have finite length (see \ref{zero-lc} and \ref{normal}).

The first fundamental exact sequence $0 \rt G \rt L \rt L(-1) \rt 0$ yields an exact sequence in cohomology
 \begin{align*}
  0 &\rt H^0(G) \rt H^0(L) \rt H^0(L)(-1) \\
  &\rt H^1(G) \rt H^1(L) \rt H^1(L)(-1) \\
  &\rt H^2(G) \rt H^2(L) = 0.
 \end{align*}
 Taking lengths, the result follows.

 (5) As $\A$ is asymptotically normal we have that $H^1(L)_n = 0$ for $n < 0$ (see \cite[9.2]{Pu5}). Also by \ref{zero-lc} we get $H^0(L)_n = 0$
 for $n < 0$. The result follows from the above exact sequence in cohomology.
 \end{proof}

\begin{remark}\label{apply-gor-approx}
 Till now we have not used our theory of complete intersection approximation. We do it now.
 We first complete $A$. Let $\A = (a_1,\ldots, a_n)$ (minimally). If $\mu(\A) = 3$ then  $a_1, a_2, a_3$ is an $A$-regular sequence. In this case we have $G_\A(A)$ is \CM \ and thus we have nothing to prove. So we assume  $\mu(\A) \geq 4$. We take a general extension $A' = A[X_1,\ldots, X_n]_{\m A[X_1,\ldots, X_n]}$. Let $y = \sum_{i=1}^{n}a_iX_i$.
 Then the ideal $J = \A'/(y)$ is asymptotically normal. Set $B = A'/(y)$. By a result of Huckaba and Huneke we get that there exists $c$ such that $G_{J^n}(B)$ is \CM \ for $n \geq c$.  We take a CI-approximation $(R,\n,\B,\psi)$ of $A$ \wrt \ $\A$ (note \emph{not of} $A'$). By \ref{high-red} we may assume $\red(\B, R) \geq c+2$. By construction we may assume  that $\B$ is generated by $b_1,\ldots,b_n$ and $\psi(b_i) = a_i$ for all $i$. Now set $R' = R[X_1,\ldots, X_n]_{\n R[X_1,\ldots, X_n]}$. By \ref{lying-above} we get that $A' = A\otimes_R R'$. We note that $(R',\n',\B',\psi')$ is a CI-approximation of $(A',\m',\A')$. Set $z = \sum_{i=1}^{n}b_iX_i$. Then note $\psi'(z) = y$.
Also note that $z$ is $R'$-superficial \wrt \ $\B'$, see \cite[2.6]{Ciu} . We now complete $R'$ \wrt \ $\n'$.
 Thus we may assume that our ring $(A,\m, \A)$ has a CI-approximation $[T,\tf,\q,\psi]$ such that
 \begin{enumerate}
 \item Both $A$ and $T$ are complete.
   \item reduction number of $\q$ is $\geq c + 2$.
   \item there exists $z \in \q$ which is $A$-superficial \wrt \ $\A$ such that \\ $G_{\A^n}(A/zA)$ is \CM \ for all $n \geq c$. Furthermore $z^*$ is $G_\q(T)$-regular.
   \item $A$ has a canonical module $\omega_A = \Hom_T(A, T)$.
   \item We note that as $\q A = \A A$, we get $\R_\q(A) = \R_\A(A)$.
   \item Let $\eF$ be the dual filtration on $\omega_A$ \wrt \ $\q$, i,e,, we have an isomorphism
   \[
   \R(\eF, \omega_A) \cong \Hom_{\R_q(T)}(\R_\A(A), \R_\q(T)).
   \]
   As $\q \omega_A = \A \omega_A$, it follows that $\eF$ is infact an $\A$-stable filtration on $\omega_A$.
   \end{enumerate}

   Let $\R_\q(T)$ be the extended Rees-algebra of $T$ \wrt \ $\q$.  Let $\N$ be the $*$-maximal ideal of $\R_\q(T)$.
\end{remark}
We first show
\begin{lemma}\label{gcm}
 (with hypotheses as in \ref{apply-gor-approx}.) We have
 \begin{enumerate}[\rm (1)]
  \item For any prime $P$ in $\R_\q(T)$ with $\htt P \leq 3$ we have $\R_\A(A)_P$ is \CM.
  \item
  $H^i_\N(\R_\A(A))$ has finite length for $0 \leq i \leq 3$.
 \end{enumerate}
\end{lemma}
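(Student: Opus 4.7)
The plan is to exploit the CI-approximation $[T,\tf,\q,\psi]$ together with the \GCM \ property of $G_\A(A)$ obtained in Lemma~\ref{rachel-itoh}(3). Since $T$ and $G_\q(T)$ are complete intersections, and since $t^{-1}$ is a non-zerodivisor on $\R_\q(T)$ (by the Krull intersection theorem) with $\R_\q(T)/t^{-1}\R_\q(T) = G_\q(T)$ a complete intersection, the extended Rees algebra $\R_\q(T)$ is \CM \ of dimension $4$ and its $*$-maximal ideal $\N$ has height $4$. Moreover $\R_\A(A) = \R_\q(A)$ is a finite $\R_\q(T)$-module on which $t^{-1}$ is again regular, with quotient $G_\A(A)$.

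For (1), let $P \subset \R_\q(T)$ be a prime with $\htt P \leq 3$. If $t^{-1} \notin P$, then $\R_\A(A)_P$ is a localization of $\R_\A(A)[t^{-1}] = A[t,t^{-1}]$, which is \CM \ because $A$ is. If $t^{-1} \in P$, set $\overline{P} = P/t^{-1}\R_\q(T)$, a prime of $G_\q(T)$ with $\htt \overline{P} \leq 2 < 3 = \dim G_\q(T)$; in particular $\overline{P}$ lies strictly below the $*$-maximal ideal of $G_\q(T)$. Since $t^{-1}$ is $\R_\A(A)_P$-regular, $\R_\A(A)_P$ is \CM \ if and only if the quotient $(G_\A(A))_{\overline{P}}$ is \CM \ as a module over $(G_\q(T))_{\overline{P}}$. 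By Lemma~\ref{rachel-itoh}(3), $G_\A(A)$ is \GCM, hence \CM \ at every non-maximal graded prime of its support. Under the finite map $G_\q(T) \to G_\A(A)$ the unique maximal graded ideal of $G_\A(A)$ contracts to the $*$-maximal ideal of $G_\q(T)$, so every prime of $G_\A(A)$ lying over $\overline{P}$ is non-maximal; standard transfer of the \CM \ property under finite local maps (together with integrality, which forces the fibre to be equidimensional) then yields that $(G_\A(A))_{\overline{P}}$ is \CM.

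For (2), since $\R_\q(T)$ is \CM \ (hence catenary), $\R_\A(A)$ is equidimensional as a finite $\R_\q(T)$-module, and part (1) shows $\R_\A(A)_P$ is \CM \ at every prime $P$ of $\R_\q(T)$ other than $\N$ (primes of height $\leq 3$ are precisely the non-maximal ones), the module $\R_\A(A)$ satisfies the standard punctured-spectrum characterization of \GCM \ modules. Consequently $H^i_\N(\R_\A(A))$ has finite length for all $i < \dim \R_\A(A) = 4$, i.e., for $0 \leq i \leq 3$. The main obstacle lies in the Case B analysis of (1), where one must transfer the intrinsic \GCM \ property of $G_\A(A)$ (over itself, with respect to its own $*$-maximal ideal) to \CM \ behavior of its localization as a module over the smaller ring $G_\q(T)$; this is handled by the finiteness of $G_\q(T) \to G_\A(A)$ together with the observation that contractions of non-maximal primes of $G_\A(A)$ are non-maximal in $G_\q(T)$.
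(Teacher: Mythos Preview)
Your argument is correct and follows the same overall strategy as the paper: split (1) into the cases $t^{-1}\notin P$ and $t^{-1}\in P$, reduce the second case to the associated graded ring, and invoke the generalized \CM\ property of $G_\A(A)$ from Lemma~\ref{rachel-itoh}(3).

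The execution differs in one point worth noting. In the case $t^{-1}\in P$ you argue intrinsically: from the \GCM\ property of $G_\A(A)$ you deduce that $G_\A(A)$ is \CM\ at every non-$*$-maximal prime, and then transfer this along the finite map $G_\q(T)\to G_\A(A)$ to conclude that $(G_\A(A))_{\overline P}$ is \CM\ over $(G_\q(T))_{\overline P}$. This is valid (it needs equidimensionality of $G_\A(A)$, which holds since $A$ is \CM), but the ``transfer'' step you flag as the main obstacle is avoided entirely in the paper: since $G_\q(T)$ is Gorenstein, graded local duality turns the finite-length statement $\ell\bigl(H^i_\M(G_\A(A))\bigr)<\infty$ into $\ell\bigl(\Ext^{3-i}_{G_\q(T)}(G_\A(A),G_\q(T))\bigr)<\infty$, and localizing these Ext modules at $\overline P\neq \M''$ immediately gives $(G_\A(A))_{\overline P}$ maximal \CM\ over the Gorenstein local ring $(G_\q(T))_{\overline P}$, with no need to pass through primes of $G_\A(A)$ at all. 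The same device handles (2): the paper simply observes that (1) forces $\Ext^{4-i}_{\R_\q(T)}(\R_\A(A),\R_\q(T))$ to have support contained in $\{\N\}$, hence finite length, and local duality over the Gorenstein ring $\R_\q(T)$ gives the conclusion. Your punctured-spectrum characterization of \GCM\ modules reaches the same endpoint, but the local-duality route is shorter and makes the role of the Gorenstein hypothesis on the CI-approximation transparent.
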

\begin{proof}
 As $\R_\q(T)$ is a Gorenstein ring the assertion (2) follows from (1) by local duality.

 (1) We first consider the case $t^{-1} \notin P$. Then $(\R_\A(A))_P$ is a localization of
 $(R_\A(A))_{t^{-1}} = A[t,t^{-1}]$ which is \CM.

 Next consider the case when $t^{-1} \in P$. Set $Q = P/(t^{-1})$ a prime ideal of height $\leq 2$ in $G_\q(T)$. As
 $G_\A(A)$ is generalized \CM \ and since $G_\q(T)$ is Gorenstein, by local duality we have that $(G_\A(A))_Q$ is \CM.
 It follows that $(\R_\A(A))_P$ is \CM.
\end{proof}
As a consequence we get
\begin{corollary}\label{exact-loc}
(with hypotheses as in \ref{apply-gor-approx}.) We have
\begin{enumerate}[\rm (1)]
  \item $H^3_\N(\R_\A(A)) = 0$
  \item an exact sequence
\[
 0 \rt H^3_\N(G_\A(A)) \rt H^4_\N(\R_\A(A))(+1) \xrightarrow{t^{-1}} H^4_\N(\R_\A(A)) \rt 0.
\]
  \item $\Ext^1_{\R_\q(T)}(\R_\A(A),\R_\q(T)) = 0$.
\end{enumerate}
\end{corollary}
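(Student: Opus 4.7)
The plan is to prove (1) first, then derive (2) and (3) as consequences: (2) by applying local cohomology to $0 \rt \R_\A(A)(+1) \xrightarrow{t^{-1}} \R_\A(A) \rt G_\A(A) \rt 0$, and (3) by invoking graded local duality over the Gorenstein ring $\R_\q(T)$ of dimension $4$.

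For (1), the main tool is the short exact sequence of graded $\R_\q(T)$-modules
\[
0 \rt \R_\A(A) \rt A[t,t^{-1}] \rt L^\A(A)(-1) \rt 0,
\]
obtained by comparing graded pieces: the degree-$n$ quotient is $A/\A^n$ for $n \geq 1$ and zero for $n \leq 0$, which matches $L^\A(A)(-1)_n$. The middle term is the localization of $\R_\A(A)$ at $t^{-1} \in \N$, so $t^{-1}$ acts invertibly on it and $H^i_\N(A[t,t^{-1}]) = 0$ for every $i$. The long exact sequence in local cohomology then collapses to isomorphisms $H^i_\N(\R_\A(A)) \cong H^{i-1}_\N(L^\A(A)(-1))$ for all $i \geq 1$; at $i = 3$, what remains is to show $H^2_\N(L^\A(A)(-1)) = 0$.

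The main technical obstacle is identifying this $\N$-local cohomology with the $\M$-local cohomology controlled by Lemma \ref{rachel-itoh}(2). The key observation is that $L^\A(A)(-1)$ is $(t^{-1})$-torsion: on the degree-$n$ piece $A/\A^n$, the induced action of $t^{-1}$ is the natural surjection $A/\A^n \rt A/\A^{n-1}$, so $(t^{-1})^n$ kills every element of degree $n$. Because $t^{-1}$ belongs to a generating set of $\N$ and acts locally nilpotently, the Čech complex computation gives $H^i_\N(L^\A(A)(-1)) = H^i_{(\tf,\q t)}(L^\A(A)(-1))$. The subring $T[\q t] \subset \R_\q(T)$ acts on $L^\A(A)(-1)$ through $\psi$ and the natural $\Sc = A[\A t]$-structure; the image of $(\tf,\q t)$ in $\Sc$ has radical equal to $\M$, so independence of base yields $H^i_\N(L^\A(A)(-1)) = H^i_\M(L^\A(A))(-1)$. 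Setting $i = 2$ and invoking Lemma \ref{rachel-itoh}(2) completes (1).

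For (2), applying $H^*_\N(-)$ to $0 \rt \R_\A(A)(+1) \xrightarrow{t^{-1}} \R_\A(A) \rt G_\A(A) \rt 0$ produces
\[
H^3_\N(\R_\A(A)) \rt H^3_\N(G_\A(A)) \rt H^4_\N(\R_\A(A))(+1) \xrightarrow{t^{-1}} H^4_\N(\R_\A(A)) \rt H^4_\N(G_\A(A)).
\]
The leftmost term vanishes by (1) and $H^4_\N(G_\A(A)) = 0$ since $\dim G_\A(A) = 3$, giving the stated exact sequence. For (3), graded local duality over the Gorenstein ring $\R_\q(T)$ identifies $\Ext^1_{\R_\q(T)}(\R_\A(A),\R_\q(T))$ (up to a shift by the $a$-invariant of $\R_\q(T)$) with the graded Matlis dual of $H^3_\N(\R_\A(A))$, which vanishes by (1); hence the $\Ext$ vanishes as well.
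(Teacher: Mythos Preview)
Your proof is correct, and for parts (2) and (3) it coincides with the paper's argument. For part (1), however, you take a genuinely different and more direct route.

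The paper proves (1) by running the long exact sequence of $H^*_\N$ on $0 \rt \R_\A(A)(+1) \xrightarrow{t^{-1}} \R_\A(A) \rt G_\A(A) \rt 0$ through degrees $0,1,2,3$. Using Lemma~\ref{gcm} (which shows $H^i_\N(\R_\A(A))$ has finite length for $i\le 3$, via local duality and the generalized Cohen--Macaulayness of $G_\A(A)$) together with the Euler-characteristic identity $\sum_{i=0}^{2}(-1)^i\ell(H^i(G_\A(A)))=0$ from Lemma~\ref{rachel-itoh}(4), the paper shows that the connecting map into $H^3_\N(\R_\A(A))(+1)$ has zero cokernel, so $t^{-1}$ acts injectively on the finite-length module $H^3_\N(\R_\A(A))$, forcing it to vanish. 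Your argument instead uses the sequence $0 \rt \R_\A(A) \rt A[t,t^{-1}] \rt L^\A(A)(-1) \rt 0$; the middle term has vanishing $\N$-cohomology since $t^{-1}\in\N$ acts invertibly, giving $H^3_\N(\R_\A(A))\cong H^2_\N(L^\A(A))(-1)$, and the $t^{-1}$-torsion plus change-of-base argument identifies the latter with $H^2_\M(L^\A(A))(-1)$, which vanishes by Lemma~\ref{rachel-itoh}(2). This bypasses Lemma~\ref{gcm} and the length count entirely; since Lemma~\ref{rachel-itoh}(4) is itself deduced from Lemma~\ref{rachel-itoh}(2), both arguments ultimately rest on $H^2(L^\A(A))=0$, but yours reaches the conclusion with fewer intermediaries.
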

\begin{remark}
The first assertion above also follows from \cite[Theorem 1.1]{KM}. However our proof is considerably simpler.
\end{remark}
\begin{proof}[Proof of Theorem \ref{exact-loc}]
 The short exact sequence
 $$0 \rt \R_\A(+1)  \xrightarrow{t^{-1}} \R_\A \rt G_\A(A) \rt 0,$$
 yields an exact sequence
 of local cohomology modules
 \begin{align*}
  0 &\rt H^0_\N(\R_\A(A))(+1) \xrightarrow{t^{-1}} H^0_\N(\R_\A(A)) \rt  H^0_\N(G_\A(A)) \\
  &\rt H^1_\N(\R_\A(A))(+1) \xrightarrow{t^{-1}} H^1_\N(\R_\A(A)) \rt  H^1_\N(G_\A(A)) \\
   &\rt H^2_\N(\R_\A(A))(+1) \xrightarrow{t^{-1}} H^2_\N(\R_\A(A)) \xrightarrow{\pi}  H^2_\N(G_\A(A)) \\
   &\rt H^3_\N(\R_\A(A))(+1) \xrightarrow{t^{-1}} H^3_\N(\R_\A(A))
 \end{align*}
Let $K$ be the cokernel of $\pi$. We note that $H^i_\N(G_\A(A)) \cong H^i(G_\A(A))$. As all the module in the above exact sequence have finite length
we have
\[
 \ell(K) = \sum_{i = 0}^{2} (-1)^i\ell(H^i(G_\A(A))  = 0 \quad \text{by Lemma \ref{rachel-itoh}(3)}.
\]
Thus we have an inclusion
\[
 H^3_\N(\R_\A(A))(+1) \xrightarrow{t^{-1}} H^3_\N(\R_\A(A))
\]
As $\ell(H^3_\N(\R_\A(A))) $ is finite we have an isomorphism
$H^3_\N(\R_\A(A))(+1) \cong H^3_\N(\R_\A(A))$. Again as $\ell(H^3_\N(\R_\A(A))) $ is finite this implies
that $H^3_\N(\R_\A(A)) = 0$.  So (1), (2) follows. By local duality we have (3).
\end{proof}
The following result is a crucial ingredient in proving Lemma \ref{crucial-itoh}.
\begin{theorem}\label{ing}(with hypotheses as in \ref{apply-gor-approx}.)
 Let $E_G$ be the injective hull of $k$ considered as a
 $G_\q(T)$-module. Set $W = \Hom_{G_\q(T)}(H^3(G_\A(A)), E_G)$. Let $s = \red(\q, T)$. Let $\eF$ be the dual filtration on $\omega_A$. Set $\eH = \eF(s-2)$. Then
 \begin{enumerate}[\rm (1)]
  \item
  $G_\eH(\omega_A) \cong W[1]$
 \item
 $ \depth G_\eH(\omega_A) \geq 2$. Furthermore $z^*$ is $G_\eH(\omega_A)$-regular.
 \item
 $\eH_n = \omega_A$ for $n \leq 0$.
 \item
 Set $\ov{T} = T/zT$, $\ov{\q} = \q/(z)$ and $B = A/zA$. Let $\ov{\eF}$ be the quotient filtration of $\eF$ on $\omega_B = \omega_A/z\omega_A$.
 Let $\eG$ be the dual filtration on $\omega_B$ \wrt \ $\R_{\ov{\q}}(\ov{T})$. Then $\ov{\eF} = \eG$.
 \item
 $(\R(\ov{H}(1),\omega_B))^{<s-1>}$ is a \CM \ $\R_{\q^{s-1}}(\ov{T})$-module.
 \item
 $\R(\ov{H}^{<s-1>}, \omega_B)$ is a \CM \ $\R_{\q^{s-1}}(\ov{T})$-module.
\item
 $H^1(G(\ov{\eH},\omega_B)_n = 0$ for $n < 0$.
 \end{enumerate}
\end{theorem}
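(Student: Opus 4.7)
The plan is to exploit three tools in concert: the duality $\R(\eF,\omega_A) \cong \Hs_{\R_\q(T)}(\R_\A(A),\R_\q(T))$ from Theorem \ref{main}; the vanishing $\Ext^1_{\R_\q(T)}(\R_\A(A),\R_\q(T)) = 0$ from Corollary \ref{exact-loc}(3); and graded local duality on the standard graded CI ring $G_\q(T)$, which has dimension $3$ and $a$-invariant $s-3$. For parts (1)--(3), I would apply Corollary \ref{basicCor} to $A$ regarded as a $T$-module with ideal $\q$; combined with the $\Ext^1$-vanishing this gives $G(\eF,\omega_A) \cong \Hs_{G_\q(T)}(G_\A(A),G_\q(T))$. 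Graded local duality on $G_\q(T)$ rewrites the right-hand side as a graded Matlis dual of $H^3(G_\A(A))$ with a shift determined by $a(G_\q(T)) = s-3$; the defining shift $s-2$ in $\eH = \eF(s-2)$ is precisely calibrated to absorb it, giving (1): $G_\eH(\omega_A) \cong W[1]$. For (2), the same Hom-description exhibits $G_\eH(\omega_A)$ (up to shift) as $\Hs(G_\A(A),G_\q(T))$ over a Gorenstein ring of depth $3$; the standard second-syzygy argument then yields $\depth \geq \min(3,2) = 2$, while left-exactness of $\Hs(G_\A(A),-)$ applied to $0 \to G_\q(T) \xrightarrow{z^*} G_\q(T)$ keeps $z^*$ injective. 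For (3), Lemma \ref{rachel-itoh}(1) gives $a(G_\A(A)) < 0$, confining $H^3(G_\A(A))$ to strictly negative degrees, so Matlis dualization plus the shifts of (1) places $W[1]$ in non-negative degrees. Hence $G_\eH(\omega_A)_n = 0$ for $n < 0$, and a downward induction using $\eH_n = \omega_A$ for $n \ll 0$ yields $\eH_n = \omega_A$ for all $n \leq 0$.

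For (4), note that $zt \in (\R_\q(T))_1$ is regular on $\R_\q(T)$ (because $z^*$ is $G_\q(T)$-regular, $t^{-1}$ is $\R_\q(T)$-regular, and $\R_\q(T)$ is $*$-local) and on $\R_\A(A)$. Applying $\Hs_{\R_\q(T)}(\R_\A(A),-)$ to $0 \to \R_\q(T) \xrightarrow{zt} \R_\q(T) \to \R_{\ov\q}(\ov T) \to 0$ and using the $\Ext^1$-vanishing yields
\[
\R(\eF,\omega_A)/zt\R(\eF,\omega_A) \;\cong\; \Hs_{\R_{\ov\q}(\ov T)}(\R_J(B),\R_{\ov\q}(\ov T)) \;=\; \R(\eG,\omega_B),
\]
with $J = \A/(z)$. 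On the other side, $z^*$ being $G_\q(T)$-regular gives $zT \cap \q^n = z\q^{n-1}$, which lifts to $\eF_n \cap z\omega_A = z\eF_{n-1}$, hence $\R(\eF,\omega_A)/zt\R(\eF,\omega_A) = \R(\ov\eF,\omega_B)$. Comparing yields $\ov\eF = \eG$.

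For (5)--(7), by the choice $s \geq c+2$ in \ref{apply-gor-approx}, $G_{J^{s-1}}(B)$ is Cohen-Macaulay of dimension $2$ and so $\R_{J^{s-1}}(B)$ is Cohen-Macaulay of dimension $3$. Repeating the argument of (1)--(2) at the Veronese level --- with Proposition \ref{dual-Ver-prop} identifying $\eG^{<s-1>}$ as the dual filtration of $\omega_B$ with respect to $\ov\q^{s-1}$, the identity $(\eG(s-1))^{<s-1>} = \eG^{<s-1>}(1)$, and the Cohen-Macaulayness of $\R_{J^{s-1}}(B)$ providing the necessary $\Ext^1$-vanishing --- realises both $\R(\ov\eH(1),\omega_B)^{<s-1>}$ and $\R(\ov\eH^{<s-1>},\omega_B)$ as Cohen-Macaulay $\R_{\ov\q^{s-1}}(\ov T)$-modules, giving (5) and (6). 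Finally, (6) yields $\depth G(\ov\eH^{<s-1>},\omega_B) \geq 2$, and \ref{asympCM} then produces $H^1(G(\ov\eH,\omega_B))_n = 0$ for $n < 0$, which is (7).

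The main obstacle is the bookkeeping of graded shifts in part (1), where the Matlis-dualization shift by $a(G_\q(T)) = s-3$ must combine exactly with the defining shift $s-2$ of $\eH$ to deliver $W[1]$; a close second is the Veronese analysis in (5)--(6), which demands Proposition \ref{dual-Ver-prop} together with careful commutation of shift, Veronese, and reduction-modulo-$z$ operations, and hinges on the Cohen-Macaulay/Gorenstein structure of $\R_{\ov\q^{s-1}}(\ov T)$.
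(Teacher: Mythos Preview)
Your proposal follows essentially the same route as the paper, and the key mechanisms you identify (Theorem~\ref{main}, Corollary~\ref{exact-loc}(3), graded local duality over $G_\q(T)$ with $a$-invariant $s-3$, Proposition~\ref{dual-Ver-prop}, and \ref{asympCM}) are exactly the ones the paper uses. For part~(1) you are in fact slightly more direct: you invoke Corollary~\ref{basicCor} together with the $\Ext^1$-vanishing to obtain $G(\eF,\omega_A)\cong\Hs_{G_\q(T)}(G_\A(A),G_\q(T))$ and then apply local duality on $G_\q(T)$; the paper instead Matlis-dualizes the exact sequence of Corollary~\ref{exact-loc}(2) and applies local duality on $\R_\q(T)$ (with $a$-invariant $s-2$). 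Both arguments land on the same identification.

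There is one genuine omission in your treatment of part~(4). You pass directly from $\Hs_{\R_\q(T)}(\R_\A(A),\R_{\ov\q}(\ov T))$ to $\Hs_{\R_{\ov\q}(\ov T)}(\R_J(B),\R_{\ov\q}(\ov T))$, implicitly using $\R_\A(A)/zt\,\R_\A(A)\cong\R_J(B)$. But $z$ is only $A$-superficial with respect to~$\A$, not $G_\A(A)$-regular, so in general $(\A^n\cap zA)/z\A^{n-1}\neq 0$ for finitely many $n$; one only has an exact sequence
\[
0\longrightarrow F\longrightarrow \R_\A(A)/zt\,\R_\A(A)\longrightarrow \R_J(B)\longrightarrow 0
\]
with $F$ of finite length. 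The paper disposes of $F$ by observing that $\R_{\ov\q}(\ov T)$ is Cohen--Macaulay of dimension~$3$, so $\Hom(F,\R_{\ov\q}(\ov T))=\Ext^1(F,\R_{\ov\q}(\ov T))=0$. You should insert this step. (Your remark that $zt$ is regular on $\R_\A(A)$ is true but does not by itself give the needed identification.) A minor related point: your short exact sequence for $zt$ on $\R_\q(T)$ is missing the degree shift; it should read $0\to\R_\q(T)(-1)\xrightarrow{zt}\R_\q(T)\to\R_{\ov\q}(\ov T)\to 0$.

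For (5)--(6) your sketch matches the paper's strategy: one needs Ooishi's theorem to know $G_{\ov\q^{s-1}}(\ov T)$, and hence $Q=\R_{\ov\q^{s-1}}(\ov T)$, is Gorenstein; you allude to this but should state it explicitly. Note also that for the Cohen--Macaulayness of $\R(\eG^{\langle s-1\rangle},\omega_B)$ no $\Ext^1$-vanishing is required: Theorem~\ref{main} already identifies it with $\Hs_Q(\R_{J^{s-1}}(B),Q)$, which is maximal Cohen--Macaulay because $\R_{J^{s-1}}(B)$ is.
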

\begin{proof}
(1) We note that the $a$-invariant of $G_\q(T)$ is $s-3$. So the $a$-invariant of $\R_\q(T)$ is $s-2$.
Let $E_\R$ be the injective hull of $k$ considered as a $\R_\q(T)$-module.  Set $(-)^\vee = \Hom_{\R_q(T)}(-, E_\R)$.
Dualizing the exact sequence in Lemma \ref{exact-loc} we get a sequence of $\R_\q(T)$-modules
\[
 0 \rt H^4_\N(\R_\A(A))^\vee \xrightarrow{t^{-1}}  H^4_\N(\R_\A(A))^\vee(-1) \rt W \rt 0.
\]
As $R_\q(T)$ is Gorenstein then by local duality we have an exact sequence
\begin{align*}
  0 & \rt \Hom_{\R_q(T)}(\R_\A(A), \R_\q(T)(s-2)) \xrightarrow{t^{-1}} \Hom_{\R_q(T)}(\R_\A(A), \R_\q(T)(s-2))(-1) \\
   &  \rt W \rt 0
\end{align*}
It follows that $G(\eH, \omega) = W[1]$

(2)
We note that  $W \cong \Hom_{G_\q(T)}(G_\A(A), G_\q(T) (s-3))$. We have that $G_\q(T)$ is \CM \ of dimension three and $z^*$ is $G_\q(T)$-regular.  The result follows.

(3)
 Set $G = G_\A(A)$.
Let $V = H^3(G)$. By \ref{rachel-itoh}(1) $a(G) < 0$. So  $V_n = 0$ for $n\geq 0$. Thus $W_m = 0$
for $m \leq 0$. Therefore $G(\eH, \omega_A)_n = 0$ for $n \leq -1$. The result follows.

(4) We have an exact sequence
\[
0 \rt \R_\q(T)(-1) \xrightarrow{zt} \R_\q(T) \rt   \R_\q(\ov{T}) \rt 0.
\]
This yields an exact sequence
\begin{align*}
  0 \rt \Hom_{\R_\q(T)}(\R_\A(A), \R_\q(T))(-1) &\xrightarrow{zt} \Hom_{\R_\q(T)}(\R_\A(A), \R_\q(T)) \rt  \\
  \Hom_{\R_\q(T)}(\R_\A(A), \R_\q(\ov{T})) &\rt \Ext^1_{\R_\q(T)}(\R_\A(A), \R_\q(T))(-1) \rt.
\end{align*}
By \ref{exact-loc} we get that $\Ext^1_{\R_\q(T)}(\R_\A(A), \R_\q(T)) = 0$.
We note that
\[
\Hom_{\R_\q(T)}(\R_\A(A), \R_\q(\ov{T})) = \Hom_{\R_\q(T)}(\R_\A(A)/zt \R_\A(A), \R_\q(\ov{T})).
\]
Set  $\B = \A/z A$. We have an exact sequence
\[
0 \rt F = \bigoplus_{n \geq 1}\frac{\A^n \cap (z) }{ z\A^{n-1}} \rt \R_\A(A)/zt \R_\A(A) \rt \R_\B(B) \rt 0.
\]
As $z$ is $A$-superficial \wrt \ $\A$  we get that $F$ has finite length. As $\R_\q(\ov{T})$ is \CM \  and has dimension $3$ we get
\[
\Hom_{\R_\q(T)}(\R_\A(A)/zt \R_\A(A), \R_\q(\ov{T})) \cong \Hom_{\R_\q(T)}(\R_\B(B), \R_\q(\ov{T})).
\]
Thus we have an exact sequence
\[
0 \rt \R(\eF, \omega_A)(-1)\xrightarrow{zt} \R(\eF,\omega_A) \rt \R(\eG,\omega_B) \rt 0;
\]
where $\eG$ is the dual filtration on $\omega_B$.
As $z^*$ is $G_\eF(\omega_A)$-regular it follows that $\eG$ is the quotient filtration of $\eF$ on $\omega_B$.

(5) By our construction $\R_{\B^n}(B)$ is \CM \ for $n \geq c$. Also $s = \red(\q, T) = \red(\ov{\q}, \ov{T}) \geq c + 2$.
As $G_{\ov{\q}}(\ov{T})$ is Gorenstein we get by \cite[Theorem 1]{Ooishi-gor-powers}  that $G_{\ov{\q}^{s-1}}(\ov{T})$ is Gorenstein. It follows that $Q = \R_{\ov{\q}^{s-1}}(\ov{T})$ is Gorenstein.
Using \ref{dual-Ver-prop} it follows that $\R(\ov{\eF}^{<s-1>},\omega_B)$ is a \CM \ $Q$-module. We note that $\R(\ov{\eF}^{<s-1>},\omega_B) = \R(\ov{\eF}, \omega_B)^{<s-1>}$.
Observe
\[
 \R(\ov{\eF}(s-1), \omega_B)^{<s-1>} =  \left(\R(\ov{\eF}, \omega_B)^{<s-1>}\right)(s-1).
\]
It follows that $ \R(\ov{\eF}(s-1), \omega_B)^{<s-1>}$ is a \CM \ $Q$-module.
So \\ $\R(\ov{\eH}(1), \omega_B)^{<s-1>}$ is a \CM \ $Q$-module.

(6) We have $\R(\ov{\eH}, \omega_B) = \bigoplus_{n \in \Z} \ov{\eH}_n$. So we get
\[
\R(\ov{\eH}(1), \omega_B)^{<s-1>} = \bigoplus_{n \in \Z} \ov{\eH}_{(n+1)(s-1)}.
\]
We also have $\R(\ov{\eH}, \omega_B)^{<s-1>} = \bigoplus_{n \in \Z} \ov{\eH}_{n(s-1)}$. Thus we have
\[
\left(\R(\ov{\eH}, \omega_B)^{<s-1>}\right)_n = \ov{\eH}_{n(s-1)} = \left(\R(\ov{\eH}(1), \omega_B)^{<s-1>}\right)_{n-1}.
\]
So we have an equality of $Q = \R_{\ov{\q^{s-1}}}(\ov{T})$-modules
\[
\left(\R(\ov{\eH}, \omega_B)^{<s-1>}\right) \cong \left(\R(\ov{\eH}(1), \omega_B)^{<s-1>}\right)(-1).
\]
Thus $\R(\ov{\eH}, \omega_B)^{<s-1>}$ is a \CM \ $Q$-module.  We also note  that \\ $\R(\ov{\eH}, \omega_B)^{<s-1>} = \R(\ov{\eH}^{<s-1>}, \omega_B)$.

(7) By (6) we get that $G(\ov{\eH}^{<s-1>}, \omega_B)$ is \CM. The result now follows from \ref{asympCM}.
\end{proof}

We now give
\begin{proof}[Proof of Lemma \ref{crucial-itoh}]
By  Lemma \ref{rachel-itoh}(2) we get $H^2(L^\A(A)) = 0$. By \ref{dag} we have an exact sequence
\[
H^1(L^\A(A)) \rt H^1(L^\A(A))(-1) \rt H^2(G_\A(A)) \rt H^2(L^\A(A)) = 0
\]
As $H^1(L^\A(A))_n = 0$ for $n < 0$ it follows that $H^2(G_\A(A))_n = 0$ for $n \leq 0$. Note that to prove $ H^1(L^\A(A)) = 0$ it suffices to show $H^2(G_\A(A)) = 0$ (because
$H^1(L^\A(A))$ has finite length).
Set $G = G_\A(A)$ and $\ov{G} = G/ztG$. We have a exact sequence
\[
0 \rt K \rt G(-1) \xrightarrow{zt} G \rt \ov{G} \rt 0;
\]
where $K$ has finite length. Taking local cohomology we get an exact sequence
\begin{align*}
  H^2(G)(-1)&\xrightarrow{zt} H^2(G) \rt H^2(\ov{G}) \\
  H^3(G)(-1)&\xrightarrow{zt} H^3(G) \rt 0.
\end{align*}
Set $Y = H^2(G)/ztH^2(G)$. Note $Y_n = 0$ for $n \leq 0$. Taking Matlis dual's  we get an exact sequence
\[
0 \rt H^3(G)^\vee \xrightarrow{zt} H^3(G)^\vee(1) \rt H^2(\ov{G})^\vee \rt Y^\vee \rt 0.
\]
By \ref{ing} we get an exact sequence
\[
0 \rt G(\ov{\eH}, \omega_B)  \rt H^2(\ov{G})^\vee \rt Y^\vee \rt 0.
\]
We note that $\depth H^2(\ov{G})^\vee \geq 2$ and $H^1(G(\ov{\eH}, \omega_B))_n = 0$ for $n < 0$. So $Y^\vee_n = 0$ for $n < 0$.
It follows that $Y_n = 0$ for $n > 0$. As $Y_n = 0$ for $n \leq 0$ we get $Y = 0$. So $H^2(G)/zt H^2(G) = 0$. By graded Nakayama Lemma we get $H^2(G) = 0$. As discussed before this implies that $H^1(L^\A(A)) = 0$.
\end{proof}

\emph{Acknowledgments:}
I thank  Lucho Avramov, Juergen Herzog, Srikanth Iyengar  and Jugal Verma for
 many useful discussions on the subject of this paper.

\providecommand{\bysame}{\leavevmode\hbox to3em{\hrulefill}\thinspace}
\providecommand{\MR}{\relax\ifhmode\unskip\space\fi MR }
\providecommand{\MRhref}[2]{%
  \href{http://www.ams.org/mathscinet-getitem?mr=#1}{#2}
}
\providecommand{\href}[2]{#2}

\end{document}